\newcommand{\bP}{\mathbf P}
\newcommand{\bbC}{\mathbb C}
\newcommand{\bbK}{\mathbb K}
\newcommand{\bbN}{\mathbb N}
\newcommand{\bbQ}{\mathbb Q}
\newcommand{\bbR}{\mathbb R}
\newcommand{\bbZ}{\mathbb Z}
\newcommand{\cD}{\mathcal D}
\newcommand{\cG}{\mathcal G}
\newcommand{\cI}{\mathcal I}
\newcommand{\cM}{\mathcal M}
\newcommand{\cN}{\mathcal N}
\newcommand{\cV}{\mathcal V}
\newcommand{\fS}{\mathfrak S}
\newcommand{\uX}{\underline{X}}
\newcommand{\bzero}{{\mathbf 0}}
\newcommand{\ie}{{\it i.e}}
\DeclareMathOperator{\card}{Card}
\DeclareMathOperator{\h}{h}
\DeclareMathOperator{\Imag}{Im}
\DeclareMathOperator{\ord}{ord}
\DeclareMathOperator{\Real}{Re}
\begin{document}
\newtheorem{thm}{Theorem}[section]
\newtheorem{alg}[thm]{Algorithm}
\newtheorem{conj}[thm]{Conjecture}
\newtheorem{cor}[thm]{Corollary}
\newtheorem{lem}[thm]{Lemma}
\newtheorem{prop}[thm]{Proposition}

\newtheorem*{defn}{Definition}

\theoremstyle{definition}
\newtheorem{ex}{Example}

\theoremstyle{remark}
\newtheorem*{ack}{Acknowledgement}
\newtheorem*{rem-nonum}{Remark}
\newtheorem{rem}[thm]{Remark}

\numberwithin{equation}{section}

\makeatletter\let\@wraptoccontribs\wraptoccontribs\makeatother

\title[A kit for linear forms in three logarithms]{A kit for linear forms in three logarithms}
\author{Maurice Mignotte}
\address{Maurice Mignotte\\
        Universit\'{e} Louis Pasteur\\
        U. F. R. de math\'{e}matiques\\
        7, rue Ren\'{e} Descartes\\
        67084 Strasbourg Cedex\\
        France}
\email{mignotte@math.unistra.fr}
\author{Paul Voutier}
\address{Paul Voutier\\
        London\\
        UK}
\email{paul.voutier@gmail.com}
\contrib[with an appendix by]{Michel Laurent}
\address{Michel Laurent\\
		Aix-Marseille Universit\'{e}\\
		Institut de Math\'{e}matiques de Marseille\\
		163 Avenue de Luminy, Case 907\\
		13288 Marseille C\'{e}dex 9\\
        France}
\email{michel-julien.laurent@univ-amu.fr}

\date{\today}

\keywords{linear forms in logarithms, Diophantine equations}
\subjclass[2020]{Primary 11D61, 11J86, Secondary 11Y50}

\begin{abstract}
We provide a technique to obtain explicit bounds for problems that can be reduced
to linear forms in three complex logarithms of algebraic numbers.
This technique can produce bounds significantly better than general results on
lower bounds for linear forms in logarithms.
We give worked examples to demonstrate both the use of our technique and the
improvements it provides. Publicly shared code is also available.
\end{abstract}

\maketitle

\section{Introduction}

\subsection{Background}

Many problems in number theory can be reduced to linear forms in the logarithms
of algebraic numbers which have a very small absolute value (exponentially small
in the coefficients of the linear form) (see \cite{Bug} for a broad selection
of examples). So, lower bounds for these linear forms that exceed the upper bounds
and with all the constants involved being explicit reduce such problems to
a finite amount of computation. For example, it is in this way (along with the
use of reduction techniques as in \cite{TW} and \cite{BH} to handle the remaining
computation) that the solution of Thue equations is
now routine, included as a function in PARI/GP \cite{Pari} and other mathematical
software.

Lower bounds for linear forms in two or three logarithms have proven to have
especially broad and important applications. In the case of linear forms in three
logarithms, such applications include Baker's solution \cite{Ba1} of the conjecture
of Gauss that there are only nine imaginary quadratic fields with class number $1$;
Tijdeman's proof \cite{Tij} that there are at most finitely many solutions of Catalan's
equation; and the result of Shorey \& Stewart \cite{SS}, and independently Peth\H{o}
\cite{Pet}, that there are only finitely many perfect powers in any binary
recurrence sequence. The use of effectively computable lower bounds for linear forms in
three logarithms gives rise to effectively computable upper bounds for each of these problems.

In this paper, we present a method, our ``kit'', that can be used to get good upper
bounds on quantities associated to such problems. The present paper has its origins
in earlier versions of our kit due to the first author in \cite{BMS1} and \cite{BMS2}.
In fact, \cite{BMS1} and \cite{BMS2} provide good examples of how somewhat weaker
versions of our kit were used to solve completely some important number theory
problems.

Our method is the method of interpolation determinants introduced by Michel Laurent
in \cite{L1}, \cite{L2} and \cite{L3}. In the case of three logarithms, this method
was used by C.D. Bennett {\it et al.} \cite{Ben-C}. But the present paper brings
some progress when compared to~\cite{Ben-C}: we treat the general case of algebraic
numbers (not only multiplicatively independent rational integers, as in \cite{Ben-C})
and many important technical details have been improved, including new zero lemmas.

Our aim, suggested by the title ``{\it A kit\ldots}'', is to explain how to obtain
results for problems that reduce to the study of linear forms in three logarithms
of algebraic numbers.

\subsection{Steps of the kit}

The process contains five steps.

\vspace*{1.0mm}

\noindent
(1) obtain an upper bound for a linear form in logs associated with our problem.

\vspace*{1.0mm}

\noindent
(2) combining the upper bound in step~(1) with a general estimate of Matveev, we obtain
an upper bound, $B_{1}$, for the maximum of the absolute values of the coefficients
of the linear form.

\vspace*{1.0mm}

\noindent
(3) supposing the linear form in three logs is {\it non-degenerate}, we use the upper
bound $B_{1}$ to obtain a second upper bound, $B_{2}$.
If $B_{2}$ is smaller than $B_{1}$ we proceed to step~(4).

\vspace*{1.0mm}

\noindent
(4) supposing the linear form in three logs is {\it degenerate}, we consider it as
a linear form in two logarithms and we apply the results of 
Laurent \cite{L5} to this linear form, along with the upper
bound $B_{1}$, to get a third upper bound $B_{3}$.\\
At this point the quantity we have bounded above by
$\min \left\{ B_{1}, \max \left\{ B_{2},B_{3} \right\} \right\}$.

\vspace*{1.0mm}

\noindent
(5) repeat steps~(3) and (4) as often as desired to make the upper bound as
small as possible.

\vspace*{1.0mm}

In our experience, there is very little further improvement after 3 iterations
(see the tables at the end of each example subsection in Section~\ref{sect:eg}
for details).

\subsection{Uses for the kit}

Our kit is most suited to the case when at least one of the algebraic numbers in
the linear form is a variable. If all three are fixed algebraic numbers, it is much better to first
use Matveev's result stated below and then apply a reduction technique like the
LLL-algorithm \cite{LLL} or variants of the Baker-Davenport reduction technique
\cite{BD}, like that of Dujella-Peth\H{o} \cite{DP}.

\subsection{Numerical results}


In the first example in Section~\ref{sect:eg}, we are able to reduce
the upper bound on the quantity $p$ from about $2 \cdot 10^{12}$ obtained by
Matveev's result to $18 \cdot 10^{6}$. In the second example there, we do even
better, reducing the upper bound on $p$ from about $3 \cdot 10^{13}$ to
$25 \cdot 10^{6}$. In our experience, these are typical of
the improvements that can be expected from our kit.

To help readers use the kit, code written in Pari,
along with examples for how to use it, is available from the authors at
\url{https://github.com/PV-314/lfl3-kit}. We encourage readers to use this code
for their applications of the kit, using the examples and documentation as a
guide. This code has now been applied to previously published uses of the
kit (\cite{BGMP,BDMS,BMS1,BMS2} -- the code for
these is available in the above github repository) and several new
problems shared with us by researchers. Support is available from the second
author and we warmly welcome questions and suggestions from users.

Another feature of our work, and the above code, is the quality of the results.
It is reasonable to believe that the degenerate case should play no part in
Theorem~\ref{thm:main} below and that only \eqref{eq:o} should matter (see the
proofs in Chapter~7 of \cite{W}, for example). In the case of ``imaginary'' linear forms in logs
(see their definition at the start of Section~\ref{sec:prelims} below)
we are able to attain such optimal bounds with our code above, while for
``real'' linear forms in logs, our code produces bounds that are at most 50\%
larger than the optimal bounds.

\subsection{Future work}

We highlight here three areas where further work would lead to significant improvements in the results,
as well as being of considerable theoretical interest for other diophantine and transcendence problems.

\vspace*{1.0mm}

\noindent
(1) Adopting Waldschmidt's approach for the degenerate case.
See Remark~\ref{rem:wald} for more information. This could reduce the bounds by a factor of
approximately $1.5$, but more importantly simplify the statement of Theorem~\ref{thm:main},
eliminating the need for conditions~\eqref{eq:thm41-i} and \eqref{eq:thm41-ii}.

\vspace*{1.0mm}

\noindent
(2) Improving the multiplicity estimates in Lemma~\ref{lem:7.2}. Calculations suggest that
the estimate there should be roughly $\Theta \left( 2K, |\cI| \right)$ instead of
$\Theta \left( K, |\cI| \right)$. The main term on the left-hand side of \eqref{eq:o} would then
become $KL$, rather than $KL/2$. This would improve the bounds obtained by a factor of roughly $5$.

\vspace*{1.0mm}

\noindent
(3) Improving the zero estimate in Proposition~\ref{prop:zero-est}. Conjecturally, the constants on the
right-hand sides of \eqref{eq:zero-est-i}--\eqref{eq:zero-est-iii} should all be $1$. The most important of these
inequalities for our work is \eqref{eq:zero-est-iii}. Replacing $3K^{2}L$ by $K^{2}L$ would lead to a further reduction
by a factor of roughly $2$ in the bounds obtained.

\subsection{Structure of this article}

In Section~\ref{sect:result}, we first provide some conventions and notations that will be used
throughout this paper and then present our main result for linear forms in three logs in Theorem~\ref{thm:main}.
Section~\ref{sec:prelims} contains the lemmas required to prove it, along with
Matveev's result which we use in step~(2). Section~\ref{sect:proof} contains the proof
of Theorem~\ref{thm:main}.

Section~\ref{sect:how-to}
provides information on the choice of the parameters in Theorem~\ref{thm:main}.
This simplifies the use of Theorem~\ref{thm:main}, reducing the selection of the
required parameters to the choice of four parameters. The best choice of these
four parameters can be found by a quick and easy brute force search.

To demonstrate both the usage of our kit and its benefits, we provide two
examples in Section~\ref{sect:eg}, revisiting the linear forms in \cite{BMS1}
and \cite{BMS2}. We obtain significant improvements in both examples. The
second example also corrects the use of the kit in \cite{BMS2}.

Lastly, we include a zero estimate due to Michel Laurent in Appendix~A. This is
the unpublished zero estimate \cite{L4} used in \cite{BMS2}, as well as in an
earlier version of this paper. In fact, Laurent's result was responsible for the
original kit, as it allowed improvements over \cite{Ben-C}. It is also applicable
more generally than our situation here, so it will be of interest to other
researchers of diophantine and transcendence problems.

\subsection{Acknowledgements}

Foremost, our thanks go to Michel Waldschmidt. He first proposed investigating
linear forms in three logarithms to the second author nearly 30 years ago. Since
then, he has been very supportive and encouraging to both authors in many ways.
Similarly, Michel Laurent has been very generous to this project and to both
authors over the years. Damien Roy and Patrice Philippon thoughtfully answered
our many questions about zero estimates. Mike Bennett
and Yann Bugeaud also deserve our thanks as they were instrumental
in bringing both authors together to complete this work. Lastly, we thank the
referee for their very careful reading of our paper and their helpful comments.

\section{Results}
\label{sect:result}

\subsection{Conventions}
\label{subsect:conventions}

We start by presenting the type of linear forms in three logarithms that we shall study.
We consider three distinct non-zero algebraic numbers $\alpha_{1}$, $\alpha_{2}$
and $\alpha_{3}$, positive rational integers $b_{1}$, $b_{2}$, $b_{3}$ with
$\gcd \left( b_{1},b_{2},b_{3} \right)=1$, and the linear form
\begin{equation}
\label{eq:lfl-form}
\Lambda = b_{1} \log \alpha_{1}+b_{2}\log \alpha_{2}-b_{3}\log \alpha_{3} \neq 0.
\end{equation}

We restrict our study to the following two cases:

\begin{itemize}
\item {\bf the real case}: $\alpha_{1}$, $\alpha_{2}$ and $\alpha_{3}$ are real
numbers greater than $1$, and the logarithms of the $\alpha_{i}$'s are all real
and positive. Furthermore, we assume that $\alpha_{1}$, $\alpha_{2}$ and $\alpha_{3}$
are multiplicatively independent over $\bbQ$. Of course, then the $\log \alpha_{j}$'s
are $\bbQ$-linearly independent. For many applications, this last assumption holds,
so in practice this should cause little restriction.

\item {\bf the imaginary case}: $\alpha_{1}$, $\alpha_{2}$ and $\alpha_{3}$ are complex
numbers $\neq 1$ of modulus one, and the logarithms of the $\alpha_{i}$ are arbitrary
determinations of the logarithm (then any of these determinations is purely imaginary).
Similar to the previous case, here we will assume that at least two of these
$\alpha$'s are multiplicatively independent over $\bbQ$ and the third one, if not
multiplicatively independent of the other two, is a root of unity.
We shall see later (see Lemma~\ref{lem:condM2})
that in this case, the $\log \alpha_{j}$'s are again $\bbQ$--linearly independent.
Once again, in practical examples, this last condition holds.
\end{itemize}

In practice, these restrictions do not cause any inconvenience since
\[
\left| \Lambda \right| \geq \max \left\{ \left| \Real(\Lambda) \right|, \left| \Imag(\Lambda) \right| \right\}.
\]

After possibly rearranging the terms and possibly replacing some logarithms by
their negatives in the imaginary case, we may assume that
\[
b_{3} \left| \log \alpha_{3} \right|
= b_{1} \left| \log \alpha_{1} \right| + b_{2} \left| \log \alpha_{2} \right|
\pm \left| \Lambda \right|.
\]

Notice that this introduces an important assymmetry between the
roles of the coefficients $b_{1}$, $b_{2}$ and $b_{3}$.

\vspace*{3.0mm}

Like the authors of \cite{Ben-C}, we use Laurent's method (see \cite{L1,L2}), and
consider a suitable interpolation determinant, $\Delta$. However, our
interpolation determinant differs from the one in \cite{Ben-C} (which was also
used in \cite{BMS1,BMS2}). We follow the construction of Waldschmidt in
Section~7.4 of \cite{W}. In examples, this change improves the
bounds we obtain by a factor of roughly $4$--$5$.

\subsection{Notation}
\label{subsect:notation}

We collect here some of the notation that we will use throughout this paper.

$\bbN$ will denote the set of non-negative rational integers.

$\bullet$ $K$, $L$, $R$, $S$, $T$ are positive rational integers
with $K \geq 3$ and $L \geq 5$.

$\bullet$ Put $N=K(K+1)L/2$ and we assume that $RST \geq N$.

$\bullet$ Let $i$ be an index from $1$ to $N$ such that $\left( k_{i},m_{i},\ell_{i} \right)$
runs through all triples of integers with $k_{i} \geq 0$, $m_{i} \geq 0$,
$k_{i}+m_{i} \leq K-1$ and $0 \leq \ell_{i} \leq L-1$. So each $0 \leq k_{i} \leq K-1$
occurs $\left( K-k_{i} \right) L$ times, and similarly each $m_{i}$ occurs
$\left( K-m_{i} \right) L$ times, and each number $0$, \dots, $L-1$
occurs $K(K+1)/2$ times as an $\ell_{i}$.

This is the main difference with the construction in \cite{Ben-C}, where
the conditions $0 \leq k_{i}, m_{i} \leq K-1$ are used instead.

$\bullet$ Put
\begin{equation}
\label{eq:g-defn}
g=\frac{1}{4}-\frac{N}{12RST}, \quad
G_{1}=\frac{NLR}{2}g, \quad
G_{2}=\frac{NLS}{2}g, \quad
G_{3}=\frac{NLT}{2}g.
\end{equation}

$\bullet$ With $d_{1}=\gcd \left( b_{1},b_{3} \right)$ and $d_{2}=\gcd \left( b_{2},b_{3} \right)$,
put
\begin{equation}
\label{eq:bi-rels}
b_{1} =d_{1} b_{1}',\ 
b_{2} =d_{2} b_{2}'', \ 
b_{3} =d_{1} b_{3}' =d_{2} b_{3}'', \ 
\beta_{1}=b_{1}/b_{3} =b_{1}'/b_{3}', \ 
\beta_{2}=b_{2}/b_{3} =b_{2}''/b_{3}''.
\end{equation}

$\bullet$ Let
\begin{equation}
\label{eq:lambda-defns}
\lambda_{i} = \ell_{i} - \frac{L-1}{2},\quad
\eta_{0} = \frac{R-1}{2}+\beta_{1} \frac{T-1}{2},\quad
\zeta_{0} = \frac{S-1}{2}+\beta_{2} \frac{T-1}{2}.
\end{equation}

$\bullet$ Let
\begin{equation}
\label{eq:b-defn}
b = \left( b_{3}'\eta_{0} \right) \left( b_{3}''\zeta_{0} \right)
\left( \prod_{k=1}^{K-1}(k!)^{K-k} \right)^{-\frac{12}{K(K-1)(K+1)}}.
\end{equation}
Similar to the $b$ in Th\'{e}or\`{e}me~1 of \cite{LMN}, this quantity arises
naturally in our proof -- see the end of the proof of Proposition~\ref{prop:M1}.

The expression involving the product of factorials here is also different
from that in \cite{Ben-C}, due to our different construction.

$\bullet$ Now we define the interpolation determinant that we shall use to
prove our results,
\begin{equation}
\label{eq:delta-defn}
\Delta = \det
\left( \binom{r_{j}b_{3}'+t_{j}b_{1}'}{k_{i}} \binom{s_{j}b_{3}''+t_{j}b_{2}''}{m_{i}}
	\alpha_{1}^{\ell_{i} r_{j}}\alpha_{2}^{\ell_{i} s_{j}}\alpha_{3}^{\ell_{i} t_{j}}
\right),
\end{equation}
where $1 \leq i, j \leq N$, $r_{j}$, $s_{j}$ and $t_{j}$ are non-negative integers
less than $R$, $S$ and $T$, respectively, such that $\left( r_{j},s_{j},t_{j} \right)$
runs over $N$ distinct triples.

$\bullet$ Lastly, with $r_{j}$, $s_{j}$ and $t_{j}$ as above in the definition
of our interpolation determinant, we let
\[
M_{1}=\frac{L-1}{2}\sum_{j=1}^{N} r_{j}, \qquad
M_{2}=\frac{L-1}{2}\sum_{j=1}^{N} s_{j}, \qquad
M_{3}=\frac{L-1}{2}\sum_{j=1}^{N} t_{j}.
\]

Here, and throughout, by $\alpha^{\beta}$, we mean $\exp \left( \beta \log \alpha \right)$
for any complex numbers $\alpha$ and $\beta$ with $\alpha \neq 0$ and some
determination of the logarithm.

\subsection{Main Theorem}
\label{subsect:main-thm}

With the above conventions and notation, we can present our main result.

\begin{thm}
\label{thm:main}
Let $\alpha_{1}$, $\alpha_{2}$ and $\alpha_{3}$ be three distinct non-zero algebraic
numbers which, along with their logarithms, satisfy one of the two conditions at
the start of this section.
Also let $b_{1}$, $b_{2}$, $b_{3}$ and $\Lambda$ be as there.
Assume that
\[
0 < \left| \Lambda \right| < 2\pi/w,
\]
where $w$ is the maximal order of a root of unity belonging to the number field
$\bbQ \left( \alpha_{1},\alpha_{2},\alpha_{3} \right)$
\footnote{\ If $D$ is the
degree of this number field, then $\varphi(w)\leq D$, where $\varphi$ is the Euler
totient function. Using \cite[Theorem~15]{RS} and some calculation for small $w$,
we see that $\varphi(w)\geq (w/2)^{0.63}$, which implies $w<2D^{1.6}$. Hence
$0 < \left| \Lambda \right| < 2\pi/w$ is satisfied if
$0<\left| \Lambda \right| \leq \pi D^{-1.6}$ and then $\Lambda \not\in i\pi\bbQ$.
Obviously, $\Lambda \not\in i\pi\bbQ$ is also satisfied when $\Lambda$ is real and non-zero.}.

Let $R_{1}$, $R_{2}$, $R_{3}$, $S_{1}$, $S_{2}$,
$S_{3}$, $T_{1}$, $T_{2}$, $T_{3}$ be positive rational integers with
\begin{equation}
\label{eq:RST-lb}
R>R_{1}+R_{2}+R_{3},\quad S>S_{1}+S_{2}+S_{3}
\text{ and } T>T_{1}+T_{2}+T_{3}.
\end{equation}

Let $\rho \geq 2$ be a real number. Suppose that
\begin{equation}
\label{eq:o}
\left( \frac{KL}{2} + \frac{L}{2} - 0.37K-2 \right) \log \rho
\geq (\cD+1)\log N + gL \left( a_{1}R+a_{2}S+a_{3}T \right) + \frac{2\cD (K-1)\log b}{3}.
\end{equation}
where
\[
a_{i} \geq \rho \left| \log \alpha_{i} \right|
    - \log \left| \alpha_{i} \right| +2 \cD \h \left( \alpha_{i} \right) \quad \text{ for } \quad
    i=1,~2,~3.
\]

Put
$\cV = \sqrt{\left( R_{1}+1 \right) \left( S_{1}+1 \right) \left( T_{1}+1 \right)}$.
If, for some positive real number $\chi$,
\begin{align}
\label{eq:thm41-i}
& \left( R_{1}+1 \right) \left( S_{1}+1 \right) \left( T_{1}+1 \right)
> K \max \left\{ R_{1}+S_{1}+1, S_{1}+T_{1}+1, R_{1}+T_{1}+1, \chi \cV \right\}, \\
\label{eq:thm41-ii}
& \card \,\left\{ \alpha_{1}^{r}\alpha_{2}^{s}\alpha_{3}^{t} :
    0 \leq r \leq R_{1}, \,0 \leq s \leq S_{1}, \,0 \leq t \leq T_{1} \right\} > L, \\
\label{eq:thm41-iii}
& \card \,\left\{ \alpha_{1}^{r}\alpha_{2}^{s}\alpha_{3}^{t} :
    0 \leq r \leq R_{2}, 0 \leq s \leq S_{2}, 0 \leq t \leq T_{2} \right\} > 2KL, \\
\label{eq:thm41-iv}
& \left( R_{2}+1 \right) \left( S_{2}+1 \right) \left( T_{2}+1 \right) > K^{2} \text{ and} \\
\label{eq:thm41-v}
& \left( R_{3}+1 \right) \left( S_{3}+1 \right) \left( T_{3}+1 \right) > 3K^{2}L
\end{align}
all hold, then either
\[
\Lambda':=\left| \Lambda \right| \cdot
\frac{LTe^{LT \left| \Lambda \right|/(2b_{3})}}{2 \left| b_{3} \right|} 
> \rho^{-KL}
\]
or
at least one of the following conditions~\eqref{eq:C1} or \eqref{eq:C2} holds:
\begin{equation}
\label{eq:C1}
\left| b_{1} \right| \leq \max \left\{ R_{1}, R_{2} \right\} \quad \text{and} \quad
\left| b_{2} \right| \leq \max \left\{ S_{1}, S_{2} \right\} \quad \text{and} \quad
\left| b_{3} \right| \leq \max \left\{ T_{1}, T_{2} \right\},
\end{equation}
\begin{equation}
\label{eq:C2}
\text{there exist $u_{1}, u_{2}, u_{3} \in \bbZ$ such that
$u_{1}b_{1}+u_{2}b_{2}+u_{3}b_{3}=0$,
with $\gcd \left( u_{1}, u_{2}, u_{3} \right)=1$},
\end{equation}
\[
\left| u_{1} \right|
\leq \frac{(S_{1}+1)( T_{1}+1)}{\cM-\max\{S_{1},T_{1}\} }, \quad
\left| u_{2} \right|
\leq \frac{(R_{1}+1)( T_{1}+1)}{\cM-\max\{R_{1},T_{1}\} }
\quad \text{and} \quad
\left| u_{3} \right|
\leq \frac{(R_{1}+1)( S_{1}+1)}{\cM-\max\{R_{1},S_{1}\}},
\]
where
$
\cM = \max \left\{ R_{1}+S_{1}+1, S_{1}+T_{1}+1, R_{1}+T_{1}+1, \chi \cV \right\}$.
\end{thm}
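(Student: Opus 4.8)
The plan is to follow the classical interpolation-determinant strategy of Laurent, as adapted by Waldschmidt and Bennett \emph{et al.}, but with the modified construction described above (the conditions $k_{i}+m_{i}\leq K-1$ rather than $0\le k_{i},m_{i}\le K-1$). First I would suppose, for contradiction, that $\Lambda'\leq \rho^{-KL}$ and that neither \eqref{eq:C1} nor \eqref{eq:C2} holds, and then derive a lower bound for $|\Delta|$ (from the fact that $\Delta$ is a nonzero algebraic number, so its absolute value times the product of its conjugates and a denominator is at least $1$) that contradicts an analytic upper bound for $|\Delta|$. The nonvanishing of $\Delta$ is exactly where the zero estimates come in: one must rule out that the $N$ functions $z\mapsto \binom{rb_{3}'+tb_{1}'}{k}\binom{sb_{3}''+tb_{2}''}{m}\alpha_{1}^{\ell r}\alpha_{2}^{\ell s}\alpha_{3}^{\ell t}$, evaluated at $N$ distinct points, give a singular matrix; this is where Proposition~\ref{prop:zero-est} (the zero estimate) together with the multiplicity estimate Lemma~\ref{lem:7.2} are invoked, and the hypotheses \eqref{eq:thm41-i}--\eqref{eq:thm41-v} on the $R_{i},S_{i},T_{i}$ are precisely what make those lemmas applicable. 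The content of \eqref{eq:C1} and \eqref{eq:C2} is that they describe the two ways the zero estimate can fail to give nonvanishing — either all the $b_{i}$ are small, or there is a small linear relation among them — so excluding them forces $\Delta\neq 0$.

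Given $\Delta\neq 0$, the next step is the arithmetic (Liouville-type) lower bound. Here one uses that the entries of $\Delta$ involve the $\alpha_{i}^{\ell_{i}r_{j}}\alpha_{2}^{\ell_{i}s_{j}}\alpha_{3}^{\ell_{i}t_{j}}$, which are algebraic, so $\Delta$ lies in a number field of degree at most $\mathcal{D}$ (or a suitable multiple), and one bounds $\log|\Delta|$ from below by $-$(size) where the size is controlled by the heights $\h(\alpha_{i})$, the binomial coefficients (whence the factorial product in the definition \eqref{eq:b-defn} of $b$), and the $\log|\alpha_{i}|$ terms — this produces the $(\mathcal{D}+1)\log N$, the $gL(a_{1}R+a_{2}S+a_{3}T)$, and the $2\mathcal{D}(K-1)\log b/3$ terms on the right of \eqref{eq:o}. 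Simultaneously, one estimates $|\Delta|$ from above analytically: using the Schwarz lemma / maximum-modulus argument on the $N\times N$ determinant viewed as a holomorphic function in one variable (the standard trick of writing $\Delta$ as a determinant of functions, bounding it by the product of a Vandermonde-type factor and the maximum of the functions on a large disk, then using that $\Lambda$ is tiny to replace $\alpha_{3}^{\ell t}$ by $(\alpha_{1}^{\beta_{1}}\alpha_{2}^{\beta_{2}})^{\ell t}e^{\text{(small)}}$). The small quantity $\Lambda'$ with its factor $LTe^{LT|\Lambda|/(2b_{3})}/(2|b_{3}|)$ is exactly the error term that appears when one makes this substitution and Taylor-expands. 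Comparing the two bounds and using \eqref{eq:o} to absorb the main terms yields $\rho^{-KL}<\Lambda'$, the contradiction.

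The organizational skeleton, then, is: (i) reduce to showing $\Delta\neq0$ via the zero and multiplicity estimates, exhibiting \eqref{eq:C1}/\eqref{eq:C2} as the exceptional outcomes (this uses \eqref{eq:thm41-i}--\eqref{eq:thm41-v} and the definition of $\mathcal{M}$, $\mathcal{V}$, $\chi$); (ii) the analytic upper bound for $|\Delta|$, where the left-hand side of \eqref{eq:o}, i.e. the $\left(\tfrac{KL}{2}+\tfrac{L}{2}-0.37K-2\right)\log\rho$ term, comes from the number of points times the gain per point in the Schwarz estimate, with the $0.37K$ and the $-2$ being the precise book-keeping constants from the construction; (iii) the arithmetic lower bound for $|\Delta|$, giving the right-hand side of \eqref{eq:o}; (iv) combine. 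I expect the main obstacle to be step (i): getting the zero estimate and the multiplicity estimate to interact cleanly so that the \emph{only} escape routes are \eqref{eq:C1} and \eqref{eq:C2}, with the stated explicit bounds on $|u_{1}|,|u_{2}|,|u_{3}|$ — this is where the bulk of the technical novelty over \cite{Ben-C} lies, and where the careful choice of the index set (the $k_{i}+m_{i}\le K-1$ condition) and the parameter $\chi$ have to be tracked with care. The analytic and arithmetic estimates (steps (ii) and (iii)), while lengthy, are essentially routine once the construction and the definitions \eqref{eq:g-defn}--\eqref{eq:b-defn} of $g$, $G_{i}$, $b$ are in place; the delicate point there is simply extracting sharp constants, which is the whole point of a ``kit''.
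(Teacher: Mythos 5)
Your proposal tracks the paper's argument accurately: it is precisely the contradiction structure (assume $\Lambda'<\rho^{-KL}$, force $\Delta\neq 0$ via the zero estimate unless \eqref{eq:C1} or \eqref{eq:C2} holds, then play a Liouville-type lower bound for $|\Delta|$ against an analytic Schwarz-lemma upper bound, with \eqref{eq:o} ensuring incompatibility) that the paper implements via Propositions~\ref{prop:M1}, \ref{prop:M2}, \ref{prop:M3} and \ref{prop:zero-est}, with Lemmas~\ref{lem:condI} and~\ref{lem:M3} supplying the exceptional cases \eqref{eq:C1} and \eqref{eq:C2}. One small misattribution: you cite Lemma~\ref{lem:7.2} (the multiplicity estimate) as part of the nonvanishing argument alongside Proposition~\ref{prop:zero-est} and claim hypotheses \eqref{eq:thm41-i}--\eqref{eq:thm41-v} make it applicable, but in the paper Lemma~\ref{lem:7.2} plays no role in nonvanishing — it is used only inside the analytic upper bound (Proposition~\ref{prop:M1}), where it gives $J_{\cI}\geq\Theta(K-1,|\cI|)$ before the Schwarz-lemma step, while the hypotheses on the $R_i,S_i,T_i$ feed exclusively into the zero estimate of Proposition~\ref{prop:zero-est}; this is a minor mislabeling and does not affect the soundness of the overall plan.
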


\section{Preliminaries}
\label{sec:prelims}

\subsection{Matveev's theorem for three logarithms}
\label{subsect:matveev}

We will need the special case of three logarithms of the theorem
of E. M. Matveev. So we quote his result in this case here.

\begin{thm}[Matveev]
\label{thm:Mat}
Let $\alpha_{1}$, $\alpha_{2}$ and $\alpha_{3}$ be three distinct non-zero
algebraic numbers, let $\log \alpha_{1}$, $\log \alpha_{2}$ and $\log \alpha_{3}$
be $\bbQ$--linearly
independent logarithms of these algebraic numbers and let $b_{1}$, $b_{2}$ and
$b_{3}$ be rational integers with $b_{1} \neq 0$. Put
\[
\Lambda = b_{1} \log \alpha_{1} + b_{2} \log \alpha_{2} + b_{3} \log \alpha_{3}.
\]

Let
\[
D=\left[ \bbQ \left( \alpha_{1},\alpha_{2},\alpha_{3} \right): \bbQ \right]
\hspace*{3.0mm} \text{ and } \hspace*{3.0mm}
\chi = \left[ \bbR \left( \alpha_{1},\alpha_{2},\alpha_{3} \right) : \bbR \right].
\]

Let $A_{1}$, $A_{2}$ and $A_{3}$ be positive real numbers, which satisfy
\[
A_{j} \geq \max \left\{ D \h \left( \alpha_{j} \right), \left| \log \alpha_{j} \right| \right\}
\quad (1 \leq j \leq 3),
\]
where $\h$ is the absolute logarithmic Weil height.

Assume that
\[
B \geq \max \left\{ \left| b_{j} \right| A_{j}/A_{1} : 1 \leq j \leq 3 \right\}.
\]

Also define
\[
C_{1} = \frac{5 \cdot 16^{5}}{6\chi} e^{3} (7+2\chi)\left(\frac{3e}{2}\right)^{\chi}
        \left( 26.25 +\log \left( D^{2}\log (eD) \right) \right).
\]
Then
\[
\log \left| \Lambda \right| > -C_{1} D^{2} A_{1} A_{2} A_{3} \log \left( 1.5eDB\log(eD) \right).
\]

\end{thm}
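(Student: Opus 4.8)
The plan is to run the classical method of linear forms in logarithms — auxiliary function, analytic extrapolation, zero estimate, Liouville inequality — since this is Matveev's theorem and what distinguishes it is the quality of the constant $C_1$: the dependence is only a fixed power $C^{3}$ in the number of logarithms and only $\log(eD)$ in the degree, so essentially all the work is in the bookkeeping of constants. Equivalently one could run Laurent's interpolation-determinant method, the route taken in the body of this paper; the global architecture is the same. The hypotheses $b_1\ne 0$ and the $\bbQ$-linear independence of $\log\alpha_1,\log\alpha_2,\log\alpha_3$ are used at two points: they guarantee $\Lambda\ne 0$, and they make the zero estimate bite.

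First I would argue by contradiction, assuming $\log|\Lambda|\le -C_1D^2A_1A_2A_3\log(1.5eDB\log(eD))$, and fix integer parameters — partial degrees $L_0,L_1,L_2,L_3$ of an auxiliary polynomial $P(z_0,z_1,z_2,z_3)\in\bbZ[z_0,z_1,z_2,z_3]$ and a number $T$ of interpolation nodes — as explicit near-optimal functions of the $A_j$, $D$ and $B$. The essential feature of the parameter choice is the split between the ``degree'' variable $z_0$ (carrying $L_0$) and the three ``height'' variables $z_1,z_2,z_3$ (carrying $L_1,L_2,L_3$): this is what produces the shape $A_1A_2A_3\log(\cdot)$ with only a single logarithm, rather than a product carrying an extra factor $\log B$.

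Next comes the analytic core. Using a sharp Siegel lemma (Bombieri–Vaaler, or Matveev's refinement), build a nonzero $P$ of controlled height so that the entire function obtained by the substitution $z_0=z$, $z_j=\alpha_j^{z}$ — arranged, via $b_1,b_2,b_3$, so that $\Lambda$ measures the failure of a multiplicative relation — vanishes to high order at many integer points. The smallness of $|\Lambda|$ then forces, through the maximum-modulus principle and a Schwarz-lemma extrapolation on discs of increasing radius, vanishing at yet more points and to yet higher order. Here the quantity $\chi=[\bbR(\alpha_1,\alpha_2,\alpha_3):\bbR]\in\{1,2\}$ enters: in the real case one controls $F$ only on a line rather than on a disc, which costs a factor — this is exactly the origin of the $(7+2\chi)$ and $(3e/2)^{\chi}$ in $C_1$. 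Finally, a Philippon-type zero (multiplicity) estimate shows that a nonzero polynomial of these degrees cannot vanish to that extent at that many points unless the $\alpha_j$ are multiplicatively dependent in a way incompatible with the $\bbQ$-linear independence of their logarithms; hence the relevant value is a nonzero algebraic number whose house and denominator are bounded by the $L_j$ and $\h(\alpha_j)$, so Liouville's inequality gives it an explicit lower bound, which contradicts the analytic upper bound once $C_1$ is chosen as stated.

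The hard part will not be any single step but the simultaneous control of all the constants. Matveev's innovation — and the delicate part — is to organize the argument so that the loss incurred by the third exponential is only a fixed power $C^{3}$ (rather than the much worse dependence on the number of logarithms in earlier Baker-type arguments), and to squeeze the degree dependence down to $\log(eD)$. This forces the use of the sharpest available Siegel lemma, a zero estimate with good numerical constants, and a non-wasteful choice of the extrapolation radii; it is the interplay of these three optimizations, not any one of them in isolation, that is genuinely delicate.
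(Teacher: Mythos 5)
The paper does not prove this theorem at all: its ``proof'' is a one-line citation to Theorem~2.1 of Matveev's paper \cite{Matveev} with $n=3$, together with one small observation that is the only original content --- namely that the outer $\max$ in Matveev's hypothesis $B\geq\max\{1,\max_j|b_j|A_j/A_1\}$ is superfluous here, because with $j=1$ one already has $|b_1|A_1/A_1=|b_1|\geq 1$ since $b_1$ is a nonzero integer. Your proposal instead attempts to sketch Matveev's proof from first principles (Siegel lemma, Schwarz-type extrapolation, Philippon zero estimate, Liouville inequality), which is a genuinely different undertaking: you are reproducing the architecture of \cite{Matveev}, whereas the paper is merely specializing and quoting it.

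As an outline of Matveev's argument your description is broadly accurate --- the split between the ``degree'' variable and the ``height'' variables is indeed how one avoids an extra $\log B$ factor, and $\chi$ does enter through real-versus-complex disc estimates. But it is only an outline: you say yourself that ``essentially all the work is in the bookkeeping of constants'' and then do none of that bookkeeping, so the specific constant $C_1=\frac{5\cdot 16^5}{6\chi}e^3(7+2\chi)(3e/2)^\chi(26.25+\log(D^2\log(eD)))$ and the specific form $\log(1.5eDB\log(eD))$ are asserted rather than derived. That is a genuine gap if this were meant to be a self-contained proof; it is also far more than the paper requires, since the paper only needs to observe that the outer $\max$ can be dropped. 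You also omit that observation, which is the one thing the paper actually proves.
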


\begin{proof}
This is a very slight simplification of Theorem~2.1 of \cite{Matveev} applied
with $n=3$. Our only change is to note that $\left| b_{j} \right| A_{j}/A_{1} \geq 1$
for $j=1$, so the outer max in Matveev's inequality
$B \geq \max \left\{ 1, \max \left\{ \left| b_{j} \right| A_{j}/A_{1} : 1 \leq j \leq 3 \right\} \right\}$
is not needed.
\end{proof}

It is because the $\log \alpha_{j}$'s are $\bbQ$--linearly independent in both of the cases
that we present in Subsection~\ref{subsect:conventions} that we can use Theorem~2.1 of \cite{Matveev}
in this work.

Note that it is also possible to use the results of Aleksentsev \cite{Al} in place
of Matveev's result. This would give a slightly smaller upper bound in Step~(2),
but make no difference to the final results obtained from the kit.

\subsection{Some combinatorial inequalities}

This subsection contains some results used in the
estimates of the interpolation determinant.

\begin{lem}
\label{lem:lemG}
Let $K$, $L$, $N$, $R$, $S$, $T$, $G_{1}$ and $M_{1}$ be as above. Put
\[
\ell_{n} = \left\lfloor \frac{2(n-1)}{K(K+1)} \right\rfloor, \quad 1 \leq n \leq N,
\]
and $\left( r_{1}, \ldots, r_{N} \right) \in \{0,1,\ldots,R-1\}^{N}$. Suppose that
for each $r \in \{ 0,1,\ldots,R-1 \}$ there are at most $ST$ indices $j$ such that
$r_{j}=r$. Then
\[
\left| \sum_{n=1}^{N} \ell_{n} r_{n}-M_{1} \right| \leq G_{1}.
\]
\end{lem}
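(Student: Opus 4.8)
The plan is to bound the deviation $\sum_{n=1}^N \ell_n r_n - M_1$ by comparing the given sequence $(r_1,\dots,r_N)$ to its ``average'' behaviour. First I would recall that $\ell_n$ takes each value $\ell \in \{0,1,\dots,L-1\}$ exactly $K(K+1)/2$ times as $n$ runs from $1$ to $N$ (this is the counting observation already made for the $\ell_i$ in Subsection~\ref{subsect:notation}), so that $\sum_{n=1}^N \ell_n = \frac{K(K+1)}{2}\cdot\frac{L(L-1)}{2} = \frac{N(L-1)}{2}$. Consequently, if all the $r_n$ were equal to a common value $\bar r$, the sum $\sum \ell_n r_n$ would equal $\frac{(L-1)}{2}N\bar r$, which is exactly the shape of $M_1 = \frac{L-1}{2}\sum_{j=1}^N r_j$ when $\sum r_j = N\bar r$. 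So the quantity to control measures how the pairing between the (sorted) $\ell$-values and the chosen $r$-values differs from the ``decoupled'' pairing.

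The key step is a rearrangement/exchange argument. Write $\sum_{n=1}^N \ell_n r_n - M_1 = \sum_{n=1}^N \bigl(\ell_n - \frac{L-1}{2}\bigr) r_n = \sum_{n=1}^N \lambda_n r_n$ in the notation of \eqref{eq:lambda-defns}, where $\lambda_n = \ell_n - \frac{L-1}{2}$ ranges symmetrically about $0$. Since $\sum_n \lambda_n = 0$, we may subtract any constant from each $r_n$; in particular replace $r_n$ by $r_n - \frac{R-1}{2}$, so it suffices to bound $\bigl|\sum_n \lambda_n (r_n - \tfrac{R-1}{2})\bigr|$. By the rearrangement inequality, among all assignments of the multiset of $r$-values to the positions $n$ (equivalently, to the sorted $\lambda$-values), the extreme values of $\sum \lambda_n r_n$ are attained when large $r$'s are paired with large $\lambda$'s (and the opposite extreme with the reverse pairing). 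The hypothesis that no value $r \in \{0,\dots,R-1\}$ is used more than $ST$ times is exactly what pins down the worst case: in the maximizing configuration, the largest $\lceil \cdot \rceil$ of the $\lambda_n$ (those with $\ell_n$ near $L-1$) get matched with $r = R-1$ repeated $ST$ times, then $r = R-2$ repeated $ST$ times, and so on, and symmetrically at the bottom. I would then compute this extremal sum explicitly: it is a double sum over $\ell \in \{0,\dots,L-1\}$ (each contributing weight $K(K+1)/2$ to the count of that $\lambda$-value) and over the ``block'' index coming from the $ST$-fold repetition of each $r$-value, and simplify. The factor $g = \frac14 - \frac{N}{12RST}$ in \eqref{eq:g-defn} should emerge from this computation: the leading term $\frac14$ comes from $\sum_\ell (\ell - \frac{L-1}{2})\cdot(\text{something linear in }\ell) \sim \frac{L^2}{12}\cdot\frac{L}{2}\cdot\text{stuff}$ paired against $r$ ranging over roughly $N/(ST)$ blocks each of length $ST$, while the correction $-\frac{N}{12RST}$ is the discreteness/boundary term from the fact that the top block $r = R-1$ has height $R-1$ rather than the idealized value. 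Matching powers gives the bound $G_1 = \frac{NLR}{2}g$.

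The main obstacle I expect is the bookkeeping in the extremal computation: one must carefully track the interaction between the two granularities (the $K(K+1)/2$-fold multiplicity of each $\ell$-value on one side, and the $ST$-fold cap on each $r$-value on the other side), handle the floor in the definition of $\ell_n$, and verify that the resulting closed-form extremal value is genuinely $\le G_1$ rather than merely asymptotically so — i.e., that no rounding pushes it above $\frac{NLR}{2}g$. A clean way to avoid case analysis on divisibility is to pass to a continuous relaxation: replace the discrete pairing by the optimal coupling of two step functions on $[0,1]$ (one built from the $\lambda$-values, one from the $r$-values with the $ST$-cap), bound the discrete sum by the continuous integral plus an error, and show the integral equals exactly $\frac{NLR}{2}g$ while the error is absorbed. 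The symmetry $\lambda_n \mapsto -\lambda_n$ (i.e. $\ell \mapsto L-1-\ell$) halves the work, since the upper and lower extreme configurations are mirror images and give the same bound $G_1$ in absolute value.
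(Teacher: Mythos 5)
The paper proves this lemma with a one-line citation: it is Lemme~4 of \cite{LMN}, applied with the $K$ of that reference replaced by $K(K+1)/2$ (so that the multiplicity with which each $\ell$-value occurs, and the definitions of $N$, $g$, $G_1$, $M_1$ all line up). You are, in effect, proposing to reprove that two-logs lemma directly, and your blueprint --- centre the $r_n$'s using $\sum_n \lambda_n = 0$, identify the rearrangement-extremal configuration where $r=R-1$ occupies the $ST$ positions of largest $\lambda_n$, $r=R-2$ the next $ST$, and so on, mirrored at the bottom, then compute --- is the standard route to such estimates, so there is no conceptual gap, only the choice between doing the work in-line and outsourcing it. Two corrections on the details of your sketch. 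The term $-N/(12RST)$ in $g$ is not a discretization or boundary artefact as you describe; it is the structural consequence of the $ST$-cap, which forces the extremal assignment to ramp (decreasing $r$ by one every $ST$ positions) rather than sitting at $r=R-1$ throughout the positive-$\lambda$ half. Your continuous relaxation makes this plain: with $\lambda(t)\approx (L-1)(t/N-1/2)$ and the optimal ramp $x(t)\approx (R-1)/2-(N-t)/(ST)$ for $t\in[N/2,N]$ (valid since $RST\ge N$, and with the mirror image below), the doubled integral evaluates to $\frac{(L-1)(R-1)N}{8}-\frac{(L-1)N^2}{24\,ST}$, which lies strictly below $G_1=\frac{NLR}{8}-\frac{N^2L}{24\,ST}$; that slack is what must absorb the roundings you correctly flag as the obstacle (the floor in $\ell_n$, the misalignment of the $ST$-blocks with the $K(K+1)/2$-multiplicity blocks). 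Checking that the slack really suffices is precisely the bookkeeping carried out in the cited Lemme~4, which is why the paper simply points there.
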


\begin{proof}
Apply Lemme~4 in \cite{LMN} with $K$ there set to $K(K+1)/2$.
\end{proof}

As in \cite[Section~1.3]{Ben-C} or \cite[p.~192]{W}, for $(k,m)\in \bbN^{2}$,
we put $\Vert (k,m) \Vert=k+m$. And for any $I, K_{0} \in \bbN$, we put
\begin{equation}
\label{eq:Theta-defn}
\Theta \left( K_{0},I \right) = \min \left\{ \Vert \left( k_{1},m_{1} \right) \Vert
+ \cdots + \Vert \left( k_{I},m_{I} \right) \Vert \right\},
\end{equation}
where the minimum is taken over all the sets of $I$ pairs $\left( k_{1},m_{1} \right)$,
\dots, $\left( k_{I},m_{I} \right)\in \bbN^{2}$ which are pairwise distinct and
satisfy $m_{1}$, \dots, $m_{I} \leq K_{0}$. Then, we have

\begin{lem}
\label{lem:Theta}
Let $K_{0}$ and $I$ be positive integers with $I \geq K_{0} \left( K_{0}+1 \right)/2$. Then
\[
\Theta \left( K_{0},I \right) \geq 
\left( \frac{I^{2}}{2(K_{0}+1)} \right) 
\left( 1+ \frac{(K_{0}-1)(K_{0}+1)}{I} - \frac{K_{0}(K_{0}+2)(K_{0}+1)^{2}}{12I^{2}} \right).
\]
\end{lem}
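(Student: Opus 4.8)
The plan is to estimate the quantity $\Theta(K_0,I)$ directly by understanding which $I$ pairs $(k,m)\in\bbN^2$ with $m\le K_0$ have the smallest total weight $\sum(k_j+m_j)$. Since the weight of a pair is $k+m$ and the only constraint is $0\le m\le K_0$ (with $k$ unbounded), the cheapest pairs to pick are those of small weight: for a fixed weight value $v$, the admissible pairs $(k,m)$ with $k+m=v$ and $0\le m\le\min\{v,K_0\}$ number $\min\{v,K_0\}+1$. So the optimal strategy is greedy: take all pairs of weight $0$, then all of weight $1$, and so on, until $I$ pairs have been used. First I would write $N_0=\sum_{v=0}^{V-1}(\min\{v,K_0\}+1)$ for the number of pairs of weight strictly less than $V$, find the threshold $V$ with $N_0\le I< N_0+(\min\{V,K_0\}+1)$, and then $\Theta(K_0,I)=\sum_{v=0}^{V-1}v(\min\{v,K_0\}+1)+V(I-N_0)$ exactly. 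The hypothesis $I\ge K_0(K_0+1)/2$ guarantees $V\ge K_0$ (so that all ``full'' rows up to $m=K_0$ have been exhausted), which is what makes the formula take the clean polynomial-in-$I$ shape we want; this is the role of that assumption.

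The main step is then to convert this exact (piecewise-quadratic) expression for $\Theta(K_0,I)$ into the stated lower bound, which is a smooth quadratic in $I$. Writing $I = N_0 + \theta$ with $0\le\theta<K_0+1$ and using that for $v\ge K_0$ each weight level contributes exactly $K_0+1$ pairs, one computes $N_0 = K_0(K_0+1)/2 + (V-K_0)(K_0+1)$ and $\sum_{v=0}^{V-1}v(\min\{v,K_0\}+1)$ as an explicit cubic in $V$ plus lower-order terms. Eliminating $V$ in favour of $I$ (via $V = (I-\theta)/(K_0+1) + K_0/2$, roughly) produces $\Theta(K_0,I) = I^2/\bigl(2(K_0+1)\bigr) + \tfrac12(K_0-1)I + \text{(terms independent of }I) + (\text{a correction from }\theta)$. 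The claimed bound, after multiplying out, is
\[
\Theta(K_0,I) \;\ge\; \frac{I^2}{2(K_0+1)} + \frac{(K_0-1)(K_0+1)}{2(K_0+1)}\cdot\frac{2I}{2} \;-\; \frac{K_0(K_0+2)(K_0+1)^2}{24I(K_0+1)}\cdot 2,
\]
i.e. the linear coefficient is exactly $(K_0-1)/2$ and the constant term is $-K_0(K_0+2)(K_0+1)/24$; so what remains is to check that the $\theta$-dependent correction together with the remaining constants is bounded below by $-K_0(K_0+2)(K_0+1)/24$. This is a one-variable convexity estimate in $\theta$ on $[0,K_0+1)$.

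I expect the routine-but-fiddly arithmetic of eliminating $V$ and bookkeeping the lower-order constants to be the only real obstacle; there is no conceptual difficulty once the greedy description of the optimum is in hand. An alternative, and probably cleaner, route — which I suspect is what the authors do — is to avoid the exact evaluation entirely: replace the discrete optimization by the continuous relaxation, i.e. bound $\sum_{j}(k_j+m_j)$ from below by the integral $\int$ of the weight function over the region swept out by the $I$ cheapest unit cells, using the standard ``rearrangement/integral comparison'' inequality (the sum of the $I$ smallest values of a function over a lattice is at least the integral of that function over a corresponding continuous set of measure $I$, minus a boundary term). Carrying out that integral over the strip $0\le m\le K_0$, with area $I$, gives precisely the main term $I^2/(2(K_0+1))$ together with the linear term $\tfrac12(K_0-1)I$, and the cubic-in-$K_0$ correction absorbs the boundary/discretization loss. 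Either way, the key insight is simply that weight is additive and $k$ is unconstrained, so the extremal configuration is the obvious greedy one.
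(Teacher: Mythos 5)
Your proposal follows essentially the same route as the paper's proof: identify the extremal configuration as the greedy filling of the diagonals $D_n=\{(k,m):m\le K_0,\,k+m=n\}$, write $I$ as (number of full levels)$\times(K_0+1)$ plus a remainder $\theta\in[0,K_0]$, compute $\Theta$ exactly, eliminate the level count in favour of $I$, and then show the $\theta$-dependent part is minimized at $\theta=0$, where it reproduces exactly the stated bound. The only slip is the remark that the last step is a ``convexity estimate'': the function of $\theta$ is actually concave (its $\theta$-derivative $\tfrac12-\tfrac{\theta}{K_0+1}$ is decreasing), so the minimum on $[0,K_0]$ is at an endpoint, and one then checks — as the paper does by noting $\Theta$ takes equal values at $\theta=0$ and $\theta=K_0+1$ — that the minimizing endpoint is $\theta=0$.
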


\begin{rem-nonum}
This is an improvement of the Lemma~1.4 of \cite{Ben-C}. If $I \equiv 0 \bmod \left( K_{0}+1 \right)$
and $K_{0}$ even, then this result is best possible. In the worst cases,
the difference between the left and right sides is at most roughly $K_{0}/8$.
\end{rem-nonum}

\begin{proof}
We follow more or less the proof of Lemma~1.4 of \cite{Ben-C}, the main difference
being the introduction of the term $r$ in the expression for $I$ below.

The smallest value for the sum 
$\Vert \left( k_{1},m_{1} \right) \Vert + \cdots + \Vert \left( k_{I},m_{I} \right)\Vert$
is reached when we choose successively, for each integer $n=0$, 1, \dots\ all
the points in the domain
\[
D_{n} = \left\{ (k,m)\in \bbN^{2} : m \leq K_{0} \text{ and } k+m=n \right\},
\]
and stop when the total number of points is $I$. Moreover,
\[
\card \left( D_{n} \right) =\begin{cases}
n+1,     & \text{if $n \leq K_{0}$,} \\
K_{0}+1, & \text{if \ $n \geq K_{0}$.}
\end{cases}
\]

Hence, for $A \geq K_{0}$, the number of points in $D_{0} \cup \cdots \cup D_{A-1}$ is
\[
\sum_{n=0}^{K_{0}-1} (n+1)+\sum_{n=K_{0}}^{A-1}(K_{0}+1)
= \frac{K_{0} \left( K_{0}+1 \right)}{2}+\left( A-K_{0} \right)\left( K_{0}+1 \right)
=\left( A - \frac{K_{0}}{2} \right)(K_{0}+1).
\]

Letting $A$ be the largest integer such that $\card \left( D_{0} \cup \cdots \cup D_{A-1} \right) \leq I$,
we can write
\[
I=\left( A - \frac{K_{0}}{2} \right) \left( K_{0}+1 \right)+r \quad \text{with $0 \leq r \leq K_{0}$},
\]
provided that $I \geq K_{0} \left( K_{0}+1 \right)/2$. Then
\[
\Theta \left( K_{0},I \right) = \sum_{n=0}^{K_{0}-1} n(n+1) +\sum_{n=K_{0}}^{A-1} n(K_{0}+1)+rA.
\]

Here
\begin{align*}
& \sum_{n=0}^{K_{0}-1} n(n+1) +\sum_{n=K_{0}}^{A-1} n(K_{0}+1) \\
= & \frac{(K_{0}-1) K_{0}(2K_{0}-1)}{6} + \frac{(K_{0}-1) K_{0}}{2}
+ \frac{K_{0}+1}{2} \left( A(A-1)-K_{0}(K_{0}-1)\right) \\
= & \frac{(K_{0}-1) K_{0} (2K_{0}+2)}{6} + \frac{K_{0}+1}{2}A(A-1)
- \frac{(K_{0}-1) K_{0} (K_{0}+1)}{2} \\ 
= & \frac{K_{0}+1}{2} \left(A(A-1) - \frac{K_{0}(K_{0}-1)}{3} \right)
\end{align*}
and we get
\begin{equation}
\label{eq:theta1}
\Theta \left( K_{0},I \right)
= \frac{K_{0}+1}{2}\left(A(A-1) - \frac{K_{0}(K_{0}-1)}{3} \right)+rA.
\end{equation}

We can write
\[
A = \frac{K_{0}}{2} + \frac{I-r}{K_{0}+1}.
\]

So using \eqref{eq:theta1} and then this expression for $A$ in terms of $r$,
we have
\[
\frac{\partial \Theta}{\partial r}
= \frac{K_{0}+1}{2} (2A-1) \frac{\partial A}{\partial r} + A 
+ r \frac{\partial A}{\partial r} = -\frac{2A-1}{2}+A-\frac{r}{K_{0}+1}
=\frac{1}{2}-\frac{r}{K_{0}+1},
\]
which shows that the minimum of $\Theta$ is reached either for $r=0$ or
$r=K_{0}$. It is easy to verify that $\Theta$ takes the same value for
$r=0$ and $r=K_{0}+1$ (which is indeed out of the range of $r$), this implies
that the minimum is reached for $r=0$. It follows that
\begin{align*}
\frac{2\Theta (K_{0},I)}{K_{0}+1}
& \geq \left( \frac{K_{0}}{2} + \frac{I}{K_{0}+1} \right) \left( \frac{K_{0}}{2} + \frac{I}{K_{0}+1}-1 \right)
- \frac{K_{0}(K_{0}-1)}{3} \\
& = \frac{K_{0}^{2}}{4} + \frac{I^{2}}{(K_{0}+1)^{2}} + \frac{K_{0} I}{K_{0}+1}
- \frac{K_{0}}{2} - \frac{I}{K_{0}+1}
- \frac{K_{0}^{2}}{3} + \frac{K_{0}}{3} \\
& = \frac{I^{2}}{(K_{0}+1)^{2}} + \frac{(K_{0} -1)I}{K_{0}+1} - \frac{K_{0}^{2}}{12}
- \frac{K_{0}}{6} \\
& = \left( \frac{I}{K_{0}+1} \right)^{2}
\left( 1+ \frac{(K_{0}-1)( K_{0}+1)}{I} - \frac{K_{0}(K_{0}+2)(K_{0}+1)^{2}}{12 I^{2}} \right).
\end{align*}

This proves the lemma.
%
\end{proof}

\begin{lem}
\label{lem:rho-exp-UB}
Let $K$, $L$ and $N$ be as in Subsection~$\ref{subsect:notation}$ with the additional
assumptions that $K \geq 3$ and $L \geq 5$. Also let $0 \leq I \leq N$ be an integer
and $\Theta \left( K_{0}, I \right)$ be as defined in $\eqref{eq:Theta-defn}$. Then
\[
KL(N-I)+\Theta \left( K-1, I \right)
\geq \frac{N^{2}}{2K} \left( 1+\frac{2}{L}-\frac{6}{KL}-\frac{1}{3L^{2}} \right).
\]
\end{lem}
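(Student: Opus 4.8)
The plan is to treat this as an optimization problem: we want a lower bound for $KL(N-I) + \Theta(K-1, I)$ that holds uniformly in $I \in [0,N]$, and then show the right-hand side is a valid such bound. The main tool is Lemma~\ref{lem:Theta} applied with $K_{0} = K-1$, which is legitimate provided $I \geq K_{0}(K_{0}+1)/2 = K(K-1)/2$; so first I would dispose of the small-$I$ regime where this hypothesis fails. When $I < K(K-1)/2$, the term $KL(N-I)$ alone is large: since $N = K(K+1)L/2$, we have $N - I > N - K(K-1)/2 = K(K+1)L/2 - K(K-1)/2$, and one checks directly that $KL$ times this already exceeds the claimed bound $\frac{N^{2}}{2K}(1 + \tfrac{2}{L} - \tfrac{6}{KL} - \tfrac{1}{3L^{2}})$, using $K \geq 3$, $L \geq 5$ and $\Theta(K-1,I) \geq 0$. (This case is where the hypotheses $K \geq 3$, $L \geq 5$ genuinely enter.)

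For the main regime $K(K-1)/2 \leq I \leq N$, substitute the bound from Lemma~\ref{lem:Theta} with $K_{0} = K-1$:
\[
KL(N-I) + \Theta(K-1,I) \geq KL(N-I) + \frac{I^{2}}{2K}\left( 1 + \frac{(K-2)K}{I} - \frac{(K-1)(K+1)K^{2}}{12I^{2}} \right).
\]
Expanding, the right side equals
\[
KLN - KLI + \frac{I^{2}}{2K} + \frac{(K-2)I}{2} - \frac{(K-1)(K+1)K}{24},
\]
which is a quadratic in $I$ with positive leading coefficient $\frac{1}{2K}$. I would minimize over real $I$: the unconstrained minimum is at $I_{0} = K\left( KL - \tfrac{K-2}{2} \right) = K^{2}L - \tfrac{K(K-2)}{2}$. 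One should check whether $I_{0}$ lies in the allowed interval $[K(K-1)/2,\, N]$ — since $N = \tfrac{K(K+1)L}{2} = \tfrac{K^{2}L}{2} + \tfrac{KL}{2}$, we have $I_{0} < N$ exactly when $\tfrac{K^{2}L}{2} - \tfrac{K(K-2)}{2} < \tfrac{KL}{2}$, i.e. roughly $KL < L + (K-2)$, which fails for $K \geq 3$; so in fact $I_{0} \geq N$ and the quadratic is decreasing on $[K(K-1)/2, N]$, meaning its minimum on the interval is attained at the endpoint $I = N$.

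So the crux is: evaluate the quadratic at $I = N = \tfrac{K(K+1)L}{2}$ and verify it is $\geq \tfrac{N^{2}}{2K}(1 + \tfrac{2}{L} - \tfrac{6}{KL} - \tfrac{1}{3L^{2}})$. At $I = N$ the term $KLN - KLI$ vanishes, leaving $\frac{N^{2}}{2K} + \frac{(K-2)N}{2} - \frac{(K-1)(K+1)K}{24}$. Now I would just substitute $N = \tfrac{K(K+1)L}{2}$ into $\frac{(K-2)N}{2}$ and $\frac{(K-1)(K+1)K}{24}$, factor out $\frac{N^{2}}{2K} = \frac{K(K+1)^{2}L^{2}}{8}$, and match terms: $\frac{(K-2)N}{2} = \frac{N^{2}}{2K}\cdot\frac{2(K-2)}{(K+1)L}$ and $\frac{(K-1)(K+1)K}{24} = \frac{N^{2}}{2K}\cdot\frac{(K-1)}{3(K+1)L^{2}}$. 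This reduces the claim to the elementary inequality $\frac{2(K-2)}{(K+1)L} - \frac{K-1}{3(K+1)L^{2}} \geq \frac{2}{L} - \frac{6}{KL} - \frac{1}{3L^{2}}$, which after multiplying through by $3L^{2}$ becomes a polynomial inequality in $K,L$ that holds for all $K \geq 3$, $L \geq 1$ (indeed one expects it to hold with room to spare). The main obstacle is purely the bookkeeping in this final algebraic verification — keeping the various $\Theta$-expansion terms aligned with the target — rather than any conceptual difficulty; everything is forced once one recognizes that the endpoint $I = N$ governs the minimum.
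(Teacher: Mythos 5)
Your proposal is correct and follows essentially the same route as the paper's proof: split off a small-$I$ regime (the paper uses $I \leq N/2$, you use $I < K(K-1)/2$, both of which work), apply Lemma~\ref{lem:Theta} with $K_{0}=K-1$ to reduce to a quadratic in $I$ with vertex at $I_{0}=K^{2}L-K^{2}/2+K>N$, conclude the minimum on the interval is at $I=N$, and verify the bound there by direct algebra (the paper delegates this last step to Maple, you match coefficients by hand — both give the same nonnegative surplus).
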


\begin{proof}
Suppose that $I \leq N/2$. Then
\[
KL(N-I) \geq \frac{KLN}{2}=\frac{N^{2}}{K+1}=\frac{3}{2}\frac{N^{2}}{2K},
\]
since $K \geq 3$. Since $L \geq 5$, we have
$1+2/L-6/(KL)-1/\left( 3L^{2} \right)<1+2/L<3/2$. Since $\Theta(K-1,I) \geq 0$,
the result follows in this case.

We now consider $I> N/2$. This and $L \geq 5$ implies that $I \geq (5/4)K^{2}$,
so we can apply Lemma~\ref{lem:Theta} with $K_{0}=K-1$ to get
\[
KL(N-I)+ \Theta (K-1,I)
\geq KL(N-I)+ \frac{I^{2}}{2K}
\left( 1+ \frac{(K-2)K}{I} - \frac{(K-1)(K+1)K^{2}}{12 I^{2}} \right).
\]

The derivative of the right-hand side with respect to $I$ is
\[
\frac{2I-2K^{2}L+K^{2}-2K}{2K}.
\]

This is linear in $I$ and the coefficient of $I$ is positive, so once the derivative
is positive, it remains positive for all larger values of $I$.
It equals $0$ at $I=K^{2}L-K^{2}/2+K$.
We can write
\[
K^{2}L-K^{2}/2+K
=K(K+1)L/2 + \left( K^{2}/2-K/2 \right)(L-1)+K/2.
\]

For $K \geq 2$, we have $K^{2}/2-K/2 \geq 1$ and since $L \geq 1$, we have
$\left( K^{2}/2-K/2 \right)(L-1)+K/2 \geq 1/2$. So this critical value of $I$
is larger than $N$. Hence
the minimum value of the above lower bound for $KL(N-I)+\Theta(K-1,I)$ occurs
at $I=N$. Thus
\[
KL(N-I) + \Theta (K-1,I)
\geq \frac{N^{2}}{2K} \left( 1+\frac{2}{L}-\frac{6}{KL}-\frac{1}{3L^{2}} \right)
+ \frac{2K+18L}{3K(K+1)L^{2}},
\]
where the equality was obtained by using Maple. This implies that the lemma holds
for all $I$.
\end{proof}

\begin{lem}
\label{lem:Stir}
{\rm (a)} Let $K>1$ be an integer, then
\begin{equation}
\label{eq:k-factorial-bnd}
\log \left( \prod_{k=1}^{K-1}(k!)^{K-k}\right)^{12/(K(K-1)(K+1))}
\geq 2\log(K) - 11/3.
\end{equation}

\noindent
{\rm (b)} With $b$, $d_{1}$, $d_{2}$, $K$, $R$, $S$ and $T$ as
defined in Subsection~$\ref{subsect:notation}$, we have
\begin{align*}
\log b \leq & \log \frac{(R-1)b_{3}+(T-1)b_{1}}{2d_{1}}+\log \frac{(S-1)b_{2}+(T-1)b_{3}}{2d_{2}} \\
            & -2\log (K) + 11/3.
\end{align*}
\end{lem}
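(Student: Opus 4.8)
The plan is to prove the two parts separately, with part~(a) being a Stirling-type estimate and part~(b) a direct consequence of part~(a) combined with the definition \eqref{eq:b-defn} of $b$.

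For part~(a), I would start from the definition of the product and take logarithms, writing
\[
\log \left( \prod_{k=1}^{K-1}(k!)^{K-k}\right) = \sum_{k=1}^{K-1}(K-k)\log(k!).
\]
The idea is to bound $\log(k!)$ from below using an explicit form of Stirling's formula, say $\log(k!) \geq k\log k - k$ (valid for $k \geq 1$, with equality essentially at $k=1$ where both sides give $-1$ versus $0$, so one must be slightly careful at the bottom of the range or use $\log(k!)\ge k\log k - k$ which does hold for all $k\ge 1$). Substituting this in gives a lower bound $\sum_{k=1}^{K-1}(K-k)(k\log k - k)$, which after splitting into $\sum (K-k)k\log k$ and $-\sum(K-k)k$ can be estimated: the second sum is exactly $(K-1)K(K+1)/6$, and the first sum can be bounded below by comparison with an integral, $\int_{1}^{K}(K-x)x\log x\,dx$, or more crudely by pulling out $\log k \ge$ something. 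The target is to show the whole expression, after multiplication by $12/(K(K-1)(K+1))$, is at least $2\log K - 11/3$. The cleanest route is probably to compare $\sum_{k=1}^{K-1}(K-k)k\log k$ with the integral $\int_0^K (K-x)x\log x\,dx = K^3\log K/6 - \text{(lower order)}$, whose leading term divided by the normalizing factor $K(K-1)(K+1)/12 \sim K^3/12$ gives exactly $2\log K$; the $-11/3$ then absorbs all the lower-order and error terms. One must check the inequality directly for small $K$ (e.g.\ $K=2,3,4$) since the asymptotic estimates are loosest there, and for these one just evaluates both sides numerically.

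For part~(b), I would take logarithms in \eqref{eq:b-defn}:
\[
\log b = \log \left( b_{3}'\eta_{0} \right) + \log \left( b_{3}''\zeta_{0} \right)
- \frac{12}{K(K-1)(K+1)}\log \left( \prod_{k=1}^{K-1}(k!)^{K-k} \right).
\]
By part~(a), the last term is at most $-(2\log K - 11/3) = -2\log K + 11/3$. It then remains to bound $b_{3}'\eta_{0}$ and $b_{3}''\zeta_{0}$ by the stated quantities. Using \eqref{eq:lambda-defns}, $\eta_{0} = (R-1)/2 + \beta_1(T-1)/2$ with $\beta_1 = b_1'/b_3' = b_1/b_3$, so $b_3'\eta_0 = b_3'(R-1)/2 + b_1'(T-1)/2 = \bigl((R-1)b_3 + (T-1)b_1\bigr)/(2d_1)$ using $b_3 = d_1 b_3'$, $b_1 = d_1 b_1'$; similarly $b_3''\zeta_0 = \bigl((S-1)b_2 + (T-1)b_3\bigr)/(2d_2)$. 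Substituting these identities and the bound from part~(a) yields exactly the claimed inequality.

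The main obstacle is part~(a): getting the explicit constant $11/3$ to come out correctly requires a careful choice of the Stirling bound and of how one estimates $\sum_{k=1}^{K-1}(K-k)k\log k$, and one must verify the small-$K$ cases by hand since that is where the estimate is tightest. Part~(b) is then purely algebraic bookkeeping with the definitions in Subsection~\ref{subsect:notation}.
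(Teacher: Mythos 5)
Your part~(b) is correct and matches the paper exactly: take logarithms in \eqref{eq:b-defn}, substitute the identities $b_3'\eta_0 = ((R-1)b_3 + (T-1)b_1)/(2d_1)$ and $b_3''\zeta_0 = ((S-1)b_2 + (T-1)b_3)/(2d_2)$ coming from \eqref{eq:lambda-defns} and \eqref{eq:bi-rels}, and apply part~(a). Your opening move in part~(a) — the bound $\log(k!) \geq k\log k - k$, isolating $\sum_{k=1}^{K-1}(K-k)k\log k$ and the exactly-computable $\sum(K-k)k = K(K-1)(K+1)/6$ — is also exactly what the paper does.

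The gap is in how you propose to estimate $\sum_{k=1}^{K-1}(K-k)k\log k$. You suggest comparing with the integral $\int_0^K(K-x)x\log x\,dx$ and asserting that ``the $-11/3$ then absorbs all the lower-order and error terms.'' This cannot work as stated, because there is no slack at leading order: the integral is $\tfrac{K^3}{6}\log K - \tfrac{5K^3}{36}$, which after normalizing by $12/(K(K-1)(K+1))$ and subtracting the $-2$ from the $\sum(K-k)k$ term gives \emph{exactly} $2\log K - 11/3 + o(1)$. The inequality is asymptotically an equality, so the error terms must be shown to be \emph{non-negative} (or at least dominated by the $O(\log K/K^2)$ slack coming from $K(K-1)(K+1)$ versus $K^3$), not merely $o(1)$. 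Moreover, the function $(K-x)x\log x$ is neither convex nor concave on $(0,K)$, so a naive sum-versus-integral comparison does not have a definite sign. The paper handles this by splitting into $K\sum k\log k - \sum k^2\log k$ and applying the Euler--Maclaurin formula with an explicit remainder bound to each piece (the remainder involving $\int|f^{(3)}|$), then verifying that the resulting explicit lower-order expression \eqref{eq:factsum-LB} is non-negative for $K \geq 3$ — which itself is delicate because the dominant terms $\tfrac{\log(1-1/K)}{6}K^3$ and $+\tfrac{K^2}{6}$ nearly cancel. Checking $K=2,3,4$ by hand does not replace this, since the estimate gets tighter, not looser, as $K$ grows. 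You would need to supply an argument of comparable precision (Euler--Maclaurin with signed error control, or a monotonicity/convexity argument tailored to the two split sums) to close this.
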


\begin{proof}
Our proof is a variant of the proof of Lemme~8 of \cite{LMN}, which itself is
based on the proof of Lemma~9 in \cite{L3}.

From the inequality $k! \geq (k/e)^{k}$, we have 
\begin{align*}
\sum_{k=0}^{K-1} (K-k) \log (k!)
& \geq \sum_{k=1}^{K-1} (K-k)k (\log (k) - 1) \\ 
&   =  \sum_{k=1}^{K-1} (K-k)k \log(k) - \sum_{k=0}^{K-1} (K-k)k.  
\end{align*}

The last term is easily shown to be $K(K+1)(K-1)/6$, so that 
\[
\sum_{k=1}^{K-1} (K-k) \log k!  
\geq \sum_{k=1}^{K-1} k(K-k) \log k - \frac{K(K+1)(K-1)}{6}.
\]

We now estimate the remaining sum, which we break into two sums: 
\[
K \sum_{k=1}^{K-1} k \log k - \sum_{k=1}^{K-1} k^{2} \log k.
\]

We use the Euler-Maclaurin summation formula to estimate these sums. We shall
use the formulation of the Euler-Maclaurin summation formula in equation~(7.2.4)
p.~303 of \cite{D} with $r=1$:
\[
f(1)+\cdots+f(n)=\int_{1}^{n} f(x) dx + \frac{f(1)+f(n)}{2} + \frac{f'(n)+f'(1)}{12}
+R_{1},
\]
where
\[
R_{1} \leq \frac{1}{2\pi^{2}} \int_{1}^{n} \left| f^{(3)}(x) \right| dx.
\]

From this point onward in the proof, we use Maple extensively to
perform the integrations and algebraic manipulations.

In this way, with $n=K-1$ and $f(n)=x\log(x)$, we have $f'(x)=\log(x)+1$ and
$f^{(3)}(x)=-x^{-2}$, so $1/ \left( 2\pi^{2} \right) \int_{1}^{K-1} x^{-2} dx=(K-2)/ \left( 2\pi^{2}(K-1) \right)$
and
\[
\sum_{k=1}^{K-1} k \log k 
\geq
\int_{1}^{K-1} x\log(x) dx + \frac{(K-1)\log(K-1)}{2} + \frac{\log(K-1)+2}{12}
-\frac{1}{2\pi^{2}}.
\]

With $n=K-1$ and $f(n)=x^{2}\log(x)$, we have $f'(x)=2x\log(x)+x$ and
$f^{(3)}(x)=2x^{-1}$, so $1/ \left( 2\pi^{2} \right) \int_{1}^{K-1} 2x^{-1} dx=\log(K-1)/\pi^{2}$
and
\[
\sum_{k=1}^{K-1} k^{2} \log k 
\leq
\int_{1}^{K-1} x^{2}\log(x) dx + \frac{(K-1)^{2}\log(K-1)}{2} + \frac{2(K-1)\log(K-1)+K}{12}
+\frac{\log(K-1)}{2\pi^{2}}.
\]

Combining these two estimates, along with
\[
\int_{1}^{K-1} x\log(x) dx
=\frac{\log(K-1)}{2}K^{2}-\frac{K^{2}}{4}-K\log(K-1)+\frac{K}{2} +\frac{\log(K-1)}{2}
\]
and
\[
\int_{1}^{K-1} x^{2}\log(x) dx
=\frac{\log(K-1)}{3}K^{3}-\frac{K^{3}}{9}
-\log(K-1)K^{2}+\frac{K^{2}}{3}
+\log(K-1)K-\frac{K}{3}
-\frac{\log(K-1)}{3}+\frac{2}{9},
\]
we obtain 
\[
\sum_{k=1}^{K-1} k(K-k) \log k  
\geq \frac{\log(K-1)K^{3}}{6}-\frac{11K^{3}}{36}+\frac{K^{2}}{6}
-\frac{\log(K-1)K}{12} + \left( \frac{7}{12}-\frac{1}{2\pi^{2}} \right)K
-\frac{\log(K-1)}{2\pi^{2}}-\frac{2}{9}.
\]

Subtracting $\left( 2\log(K)-11/3 \right) K(K-1)(K+1)/12$, we obtain
\begin{equation}
\label{eq:factsum-LB}
\frac{\log(1-1/K)}{6}K^{3} + \frac{K^{2}}{6} + \frac{\log \left( K^{2}/(K-1) \right)}{12}K
+ \left( \frac{5}{18} -\frac{1}{2\pi^{2}} \right) K-\frac{\log(K-1)}{2\pi^{2}}-\frac{2}{9}.
\end{equation}

From the series expansion of $\log(1-x)$, we find that
$\log(1-1/K)>-1/K-1/\left( 2K^{2} \right) - 2/\left( 3K^{3} \right)$ for $K \geq 2$,
so \eqref{eq:factsum-LB} is larger than
\[
\frac{\log(K)}{12}K
+ \left( \frac{7}{36} -\frac{1}{2\pi^{2}} \right) K
-\frac{\log(K-1)}{2\pi^{2}}-\frac{1}{3}
\]
for $K \geq 2$.

This expression is positive for $K \geq 3$, since $3/12>1/\left( 2\pi^{2} \right)$
and $\left( \frac{7}{36} -\frac{1}{2\pi^{2}} \right)>1/3$.
So part~(a) holds for $K \geq 3$.

Part~(a) also holds for $K=2$, since the left-hand side of \eqref{eq:k-factorial-bnd}
is $0$ for $K=2$, while the right-hand side is $-2.28\ldots$.

\vspace*{3.0mm}

(b) Using the definitions of $b$, $\eta_{0}$, $\zeta_{0}$, $\beta_{1}$ and $\beta_{3}$,
we have
\[
b = \left( b_{3}'\frac{R-1}{2} + b_{1}'\frac{T-1}{2} \right)
\left( b_{3}'' \frac{S-1}{2} + b_{2}''\frac{T-1}{2} \right)
\left( \prod_{k=1}^{K-1}k! \right)^{-\frac{12}{K(K-1)(K+1)}}.
\]

Applying the relationships in \eqref{eq:bi-rels}, part~(b) follows immediately.
\end{proof}

\subsection{An upper bound for $\left| \Delta \right|$}

In this subsection, we prove the result below, Proposition~\ref{prop:M1}, an
upper bound for $\left| \Delta \right|$ (also see \cite[Proposition~12.5]{BMS1}).
We start with an estimate for the zero multiplicity of a certain function, the
determinant of a particular matrix, at $x=0$. We closely follow Section~7.2 of
\cite{W}.

Let $K$ and $N$ be positive integers, $\eta_{1},\ldots, \eta_{N}$,
$\zeta_{1},\ldots, \zeta_{N}$ elements of $\bbC$, $f_{1},\ldots,f_{N}$ analytic
functions in $\bbC$, $\theta_{1}$ and $\theta_{2}$ non-zero complex numbers and
$p_{1},\ldots,p_{N}$ polynomials in $\bbC \left[ z_{1},z_{2} \right]$ of total
degree at most $K$. We define, for $1 \leq i \leq N$,
\[
\phi_{i} \left( z_{1},z_{2} \right)
= p_{i} \left( z_{1},z_{2} \right)
f_{i} \left( \theta_{1}z_{1}+\theta_{2}z_{2} \right).
\]


Let $\cI$ be a subset of $\{ 1,\ldots, N \}$. We define an $N \times N$ matrix
with entries
\[
\Phi_{\cI}(x)_{i,j}
=
\left\{
\begin{array}{ll}
\phi_{i} \left( x\eta_{j}, x\zeta_{j} \right), & \text{if $i \in \cI$},\\
\delta_{i,j} \phi_{i} \left( x\eta_{j}, x\zeta_{j} \right), & \text{if $i \not\in \cI$},
\end{array}
\right.
\]
where $\delta_{i,j}$ are complex numbers and let $\Psi_{\cI}(x)=\det \left( \Phi_{\cI}(x) \right)$.

\begin{lem}
\label{lem:7.2}
The function $\Psi_{I}(x)$ has a zero at $x=0$ of multiplicity at least
$\Theta \left( K, |\cI| \right)$,
where $|\cI|$ is the number of elements in $\cI$.
\end{lem}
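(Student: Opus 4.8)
The plan is to compute the order of vanishing of $\Psi_{\cI}$ at $x=0$ by expanding the determinant by multilinearity over the Taylor coefficients of the entries in the rows indexed by $\cI$. Write each $\phi_i$ as a power series in $x$: substituting $z_1 = x\eta_j$, $z_2 = x\zeta_j$ into $\phi_i(z_1,z_2) = p_i(z_1,z_2) f_i(\theta_1 z_1 + \theta_2 z_2)$, the entry $\phi_i(x\eta_j,x\zeta_j)$ becomes a power series in $x$ whose coefficient of $x^\tau$ is obtained by applying a differential operator of order $\tau$ (a combination of $\partial_{z_1}$ and $\partial_{z_2}$ evaluated at $0$, weighted by $\eta_j,\zeta_j$) to $\phi_i$. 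The key structural point is that, because $p_i$ has total degree at most $K$ and $f_i$ contributes only derivatives of the single-variable function along the line $\theta_1 z_1 + \theta_2 z_2$, the coefficient of $x^\tau$ in row $i$ lies in the span of the functionals $(k,m)\mapsto$ ``coefficient picked out by a monomial $z_1^{k}z_2^{m}$-type operator together with the $f_i$-derivative'', and for each row $i$ only finitely many such $(k,m)$ with $m$ bounded by the degree $K$ of $p_i$ can occur — this is precisely the combinatorial data that $\Theta(K,|\cI|)$ measures.

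Concretely, I would argue as follows. Expand $\det(\Phi_{\cI}(x))$ along the $|\cI|$ rows with $i\in\cI$ using multilinearity: each such row is a power series in $x$, so the determinant is a sum, over choices of a nonnegative integer $\tau_i$ for each $i\in\cI$, of $x^{\sum_{i\in\cI}\tau_i}$ times a determinant whose $i$-th row ($i\in\cI$) is the vector of $x^{\tau_i}$-Taylor coefficients (the rows $i\notin\cI$ contribute their full power series, each of which vanishes to order at least $0$, so they only help). For such a term to be nonzero, the $|\cI|$ vectors obtained as $x^{\tau_i}$-coefficients of the rows $i\in\cI$ must be linearly independent. Now each $x^{\tau}$-coefficient of row $\phi_i(x\eta_j,x\zeta_j)$ is a linear combination of functions of $j$ of the form $\eta_j^{a}\zeta_j^{b} f_i^{(c)}(0)$-type expressions with $a+b+c = \tau$ and, crucially, with the ``polynomial part'' exponents constrained by $\deg p_i \le K$; grouping by the pair $(k,m)$ recording the contribution of $p_i$, one sees that the $x^{\tau}$-coefficient lies in the span of vectors indexed by pairs $(k,m)$ with $\|(k,m)\| = k+m \le \tau$ and $m \le K$ (the bound $m\le K$ — or the appropriate coordinate bound — coming from the total degree of $p_i$). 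Hence if the $|\cI|$ selected coefficient-vectors are independent, the multiset of pairs $(k_1,m_1),\dots,(k_{|\cI|},m_{|\cI|})$ attached to them must be pairwise distinct with each $m\le K$, so $\sum_i \tau_i \ge \sum_i \|(k_i,m_i)\| \ge \Theta(K,|\cI|)$ by the very definition \eqref{eq:Theta-defn}. Therefore every nonzero term in the expansion carries a power $x^{\sum \tau_i} \ge x^{\Theta(K,|\cI|)}$, which is the claim.

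The main obstacle, and the step deserving the most care, is the bookkeeping that identifies, for each row, the correct assignment of a pair $(k_i,m_i)\in\bbN^2$ with $m_i\le K$ to its $x^{\tau_i}$-Taylor coefficient, and the verification that distinct independent coefficient-vectors force distinct pairs. This is where the hypothesis that $p_i$ has total degree at most $K$ (equivalently, the ``$m\le K_0$ with $K_0 = K$'' constraint in the definition of $\Theta$) is used, and where one must be careful that the functional extracting the $x^\tau$-coefficient genuinely decomposes along the monomial structure of $p_i$ composed with the linear substitution inside $f_i$ — the linearity of $\theta_1 z_1 + \theta_2 z_2$ is what keeps the $f_i$-part from contributing extra independent directions beyond a single derivative order. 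Once this is set up cleanly (following Section~7.2 of \cite{W}), the rest is the multilinear expansion and the appeal to \eqref{eq:Theta-defn}; the rows with $i\notin\cI$ require no work since they only contribute additional (nonnegative) powers of $x$. Note also that the statement is about the multiplicity being \emph{at least} $\Theta(K,|\cI|)$, so we only need the lower bound on each term's order, not any matching upper bound.
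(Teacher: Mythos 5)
The paper's own ``proof'' of this lemma is a one-line citation to Lemma~7.2 of Waldschmidt's book (specialised to $n=2$), so you are in effect reconstructing that argument. Your overall architecture matches the cited proof: expand $\det\Phi_{\cI}(x)$ by multilinearity over the Taylor coefficients of the rows indexed by $\cI$, bound from below the total order $\sum_{i\in\cI}\tau_i$ in any nonzero term by a dimension count, and recognise the resulting minimisation as the definition of $\Theta$ in~\eqref{eq:Theta-defn}. You also correctly dispose of the rows with $i\notin\cI$, which only contribute nonnegative powers of $x$.

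There is, however, a genuine gap at exactly the step you flag as the main obstacle. You assert that if the $|\cI|$ selected coefficient vectors are independent, then ``the multiset of pairs $(k_i,m_i)$ attached to them must be pairwise distinct.'' But a given coefficient vector $c_{i,\tau_i}$ is not attached to a single pair: it is a linear combination of basis functions indexed by many pairs. What one actually needs is the dimension bound $\dim E_\tau \le \#\{(k,m)\in\bbN^2 : k+m=\tau,\ m\le K\} = \min(\tau+1,K+1)$ for the space $E_\tau$ in which the $x^\tau$-coefficient of every row lives; from independence it then follows that at most $\min(\tau+1,K+1)$ of the indices $i\in\cI$ can share the value $\tau_i=\tau$, and minimising $\sum_i\tau_i$ under this constraint is precisely $\Theta(K,|\cI|)$. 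Moreover, obtaining that dimension bound genuinely requires the linear change of coordinates $(z_1,z_2)\mapsto(w,v)$ with $w=\theta_1 z_1+\theta_2 z_2$ and $v=z_2$ (assuming $\theta_1\neq0$). Before the change of variables, the $x^\tau$-coefficient lies in the span of the degree-$\tau$ forms $\eta^a\zeta^b(\theta_1\eta+\theta_2\zeta)^\ell$ with $a+b+\ell=\tau$ and $a+b\le K$; neither $a$ nor $b$ alone is bounded by $K$, and these generators are more numerous than $\min(\tau+1,K+1)$. After the change of variables, every such generator becomes a degree-$\tau$ form in $(w,v)$ whose $v$-degree is at most $a+b\le K$, and this is exactly where the $m\le K$ constraint of $\Theta$ enters. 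Your hedge ``$m\le K$ --- or the appropriate coordinate bound'' signals that this crux has not been pinned down; without it, the claim that independence forces distinct pairs, and hence the appeal to the definition of $\Theta$, does not close.
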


\begin{proof}
This is Lemma~7.2 of \cite{W} in the case of $n=2$, since the total degree
of each of the polynomials, $p_{1},\ldots,p_{N}$ is at most $K$.
\end{proof}

Returning to our specific situation here, let $K$, $L$, $N$, $R$, $S$ and $T$, along with the $r_{j}$'s, $s_{j}$'s and
$t_{j}$'s be as defined in Subsection~\ref{subsect:notation}.

Recalling our definition of the $\lambda_{i}$'s in \eqref{eq:lambda-defns}, we have
\[
\sum_{i=1}^{N}\lambda_{i}=\frac{K(K+1)}{2}\sum_{i=0}^{L-1} \left( i - (L-1)/2 \right)=0
\]
and the following slight variation of equation~(2.1) in \cite{Ben-C} (our $\Lambda'$
is slightly different from theirs)
\begin{equation}
\label{eq:lambdaPrime-rel}
\alpha_{1}^{\lambda_{i} r_{j}}\alpha_{2}^{\lambda_{i} s_{j}}\alpha_{3}^{\lambda_{i} t_{j}}
= \alpha_{1}^{\lambda_{i}(r_{j}+t_{j}\beta_{1})}
\alpha_{2}^{\lambda_{i}(s_{j}+t_{j}\beta_{2})}e^{\lambda_{i} t_{j}\Lambda/b_{3}} 
= \alpha_{1}^{\lambda_{i}(r_{j}+t_{j}\beta_{1})}
\alpha_{2}^{\lambda_{i}(s_{j}+t_{j}\beta_{2})}(1+\theta_{i,j}\Lambda'),
\end{equation}
where
\[
\theta_{i,j}= \frac{ e^{\lambda_{i} t_{j}\Lambda/b_{3}}-1}{\Lambda'}
\]
and
\begin{equation}
\label{eq:lambda-prime-defn}
\Lambda'=\left| \Lambda \right| \cdot
    \frac{LTe^{LT \left| \Lambda \right|/(2b_{3})}}{2b_{3}}.
\end{equation}

Let
\begin{equation}
\label{eq:phi-defn}
\phi_{i}(\eta,\zeta)
= \frac{{b_{3}'}^{k_{i}}{b_{3}''}^{m_{i}}}{k_{i}!\,m_{i}!} \eta^{k_{i}}\zeta^{m_{i}}
\alpha_{1}^{\lambda_{i}\eta} \alpha_{2}^{\lambda_{i}\zeta},
\end{equation}
for any $i=1,\ldots, N$, and
\[
\Phi_{\cI} (x)_{i,j}=\begin{cases}
\phi_{i} \left( x\eta_{j}, x\zeta_{j} \right),             & \text{if $i \in \cI$,} \\
\theta_{i,j}\phi_{i} \left( x\eta_{j}, x\zeta_{j} \right), & \text{if $i \not\in \cI$,}
\end{cases}
\]
for any subset $\cI$ of $\cN=\{1,\ldots, N \}$ and $j=1,\ldots, N$.

In our notation before Lemma~\ref{lem:7.2}, here we put
$p_{i} \left( z_{1}, z_{2} \right)
=\frac{{b_{3}'}^{k_{i}}{b_{3}''}^{m_{i}}}{k_{i}!\,m_{i}!} z_{1}^{k_{i}}z_{2}^{m_{i}}$,
$f_{i}(z)=\exp \left( \lambda_{i} z \right)$
and $\theta_{i,j}=\delta_{i,j}$.
Hence we can write
$\alpha_{1}^{\lambda_{i}z_{1}}\alpha_{2}^{\lambda_{i}z_{2}}
=\exp \left( \lambda_{i} \left( \log \left( \alpha_{1} \right) z_{1}
+ \log \left( \alpha_{2} \right) z_{2} \right) \right)
=f_{i}\left( \theta_{1}z_{1}+\theta_{2}z_{2} \right)$ with
$\theta_{1}=\log \left( \alpha_{1} \right)$ and
$\theta_{2}=\log \left( \alpha_{2} \right)$.

We put $\cM_{\cI} = \left( \Phi_{\cI}(1)_{i,j} \right)$,
$\Psi_{\cI}(x)=\det \left( \Phi_{\cI} (x)_{i,j} \right)$,
\begin{equation}
\label{eq:deltaI-defn}
\Delta_{\cI}=\Psi_{\cI}(1)
\end{equation}
and
\[
J_{\cI} = \ord_{x=0} \left( \Psi_{\cI}(x) \right).
\]

\begin{prop}
\label{prop:M1}
Suppose $K$ and $L$ are two integers satisfying $K \geq 3$ and $L \geq 5$. If
\begin{equation}
\label{eq:star} 
\Lambda' < \rho^{-KL}
\end{equation}
holds for some real number $\rho \geq 2$, then
\begin{align*}
    \log \left| \Delta \right|
    < & \sum_{i=1}^{3} M_{i} \log \left| \alpha_{i} \right| + \rho \sum_{i=1}^{3} G_{i} \left| \log \alpha_{i} \right|
        +\log(N!)+N\log 2+ \frac{N(K-1)}{3} \log b \\
      & -\frac{N^{2}}{2K} \left( 1-\frac{2}{3L}-\frac{2}{3KL}-\frac{1}{3L^{2}}-\frac{16}{3K^{2}L} \right)
    \log \rho + 0.001.
\end{align*}
\end{prop}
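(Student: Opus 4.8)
The plan is to bound $\log|\Delta|$ by combining an upper bound on each entry of the matrix defining $\Delta$ with the zero-multiplicity estimate of Lemma~\ref{lem:7.2}, exploiting the relation \eqref{eq:lambdaPrime-rel} to pass from the determinant $\Delta$ of \eqref{eq:delta-defn} to the functions $\Psi_{\cI}(x)$. First I would expand $\Delta$ by multilinearity of the determinant with respect to the factor $(1+\theta_{i,j}\Lambda')$ appearing in \eqref{eq:lambdaPrime-rel}: rewriting the $(i,j)$ entry as $\phi_i(r_j+t_j\beta_1, s_j+t_j\beta_2)\cdot(1+\theta_{i,j}\Lambda')$ (up to the binomial/factorial bookkeeping and the powers of $\alpha$ already absorbed into $\phi_i$), one gets $\Delta = \sum_{\cI \subseteq \cN} \Lambda'^{\,|\cN \setminus \cI|}\,\Delta_{\cI}$, where $\Delta_{\cI}=\Psi_{\cI}(1)$ as in \eqref{eq:deltaI-defn}. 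Since $|\theta_{i,j}| \le 1$ (this needs a short check using $|\mathrm e^w - 1| \le |w|\mathrm e^{|w|}$ together with the definition \eqref{eq:lambda-prime-defn} of $\Lambda'$, which was designed precisely so that $|\lambda_i t_j \Lambda/b_3| \le LT|\Lambda|/(2b_3)$), each $|\Delta_{\cI}|$ is at most a sum over permutations of products of $|\phi_i(\eta_j,\zeta_j)|$-type terms, and the number of subsets $\cI$ with $|\cN\setminus\cI|=s$ is $\binom{N}{s}$, so the whole sum is controlled by $2^N \max_{\cI}|\Delta_{\cI}|\cdot{\Lambda'}^{?}$ with the hypothesis \eqref{eq:star} making the $\Lambda'$ powers harmless once $|\cN\setminus\cI|$ is large.

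Next I would apply the Schwarz-lemma / maximum-modulus argument to each $\Psi_{\cI}(x)$: it is analytic, vanishes to order $J_{\cI}\ge \Theta(K,|\cI|)$ at $x=0$ by Lemma~\ref{lem:7.2} (taking $p_i(z_1,z_2)=\frac{{b_3'}^{k_i}{b_3''}^{m_i}}{k_i!\,m_i!}z_1^{k_i}z_2^{m_i}$, of total degree $k_i+m_i\le K-1<K$, and $f_i(z)=\exp(\lambda_i z)$), so $|\Psi_{\cI}(1)| \le \rho^{-J_{\cI}}\max_{|x|=\rho}|\Psi_{\cI}(x)|$. On the circle $|x|=\rho$ I would bound $|\Psi_{\cI}(x)|$ via Hadamard's inequality or Leibniz expansion: the contribution of the polynomial factors gives the $\log b$ term (using Lemma~\ref{lem:Stir}(b) to convert $\prod(k!)^{K-k}$ into $-2\log K + 11/3$, which is the source of the definition \eqref{eq:b-defn} of $b$ — this is ``the end of the proof'' referenced after \eqref{eq:b-defn}); the exponential factors $\alpha_1^{\lambda_i \eta_j}\alpha_2^{\lambda_i\zeta_j}$ on $|x|=\rho$ contribute $\rho$ times sums of $|\lambda_i|\,|\log\alpha_k|$ weighted by the $\eta_j,\zeta_j$, which after the combinatorial averaging of Lemma~\ref{lem:lemG} (applied with the $\ell_n$ substitution and the fact that each value of $r_j$, $s_j$, $t_j$ occurs at most $ST$, $RT$, $RS$ times among $N\le RST$ triples) collapse to $M_i\log|\alpha_i| + \rho G_i|\log\alpha_i|$ up to controlled error; and the $\log|\alpha_i|$ (as opposed to $|\log\alpha_i|$) pieces coming from $\Real$ vs full modulus give exactly the $\sum M_i\log|\alpha_i|$ main term. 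The $\log(N!)+N\log 2$ comes from the $N!$ permutations in Leibniz and the $2^N$ subsets.

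The last ingredient is to turn $\rho^{-J_{\cI}}$ into the clean $-\tfrac{N^2}{2K}(\cdots)\log\rho$ term: I would use Lemma~\ref{lem:rho-exp-UB}, which says $KL(N-|\cI|)+\Theta(K-1,|\cI|)\ge \tfrac{N^2}{2K}(1+\tfrac2L-\tfrac6{KL}-\tfrac1{3L^2})$. The factor $KL(N-|\cI|)$ is what one loses from the $N-|\cI|$ rows not in $\cI$ (each carrying a $\Lambda'$ and, via \eqref{eq:star}, contributing $\ge KL\log\rho$ of saving), while $\Theta(K-1,|\cI|)$ is the vanishing order from Lemma~\ref{lem:7.2} with $K_0=K-1$; combining these uniformly over $\cI$ gives the stated coefficient, after absorbing the discrepancy between the $\Theta(K,\cdot)$ of Lemma~\ref{lem:7.2} and the $\Theta(K-1,\cdot)$ of Lemma~\ref{lem:rho-exp-UB} (which accounts for the slightly weaker constants $-\tfrac23$ instead of $+\tfrac23$ etc. in Proposition~\ref{prop:M1} versus Lemma~\ref{lem:rho-exp-UB}, plus small-order corrections that produce the harmless $+0.001$). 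I expect the main obstacle to be the careful bookkeeping in the multilinear expansion — matching the $\theta_{i,j}$-factorization of \eqref{eq:lambdaPrime-rel} against the matrix $\Phi_{\cI}$ of Lemma~\ref{lem:7.2}, checking $|\theta_{i,j}|\le1$ under the hypothesis $|\Lambda|<2\pi/w$, and ensuring the combinatorial averaging via Lemmas~\ref{lem:lemG} and~\ref{lem:Theta} is applied with exactly the right multiplicities so that the error terms genuinely fit inside the $+0.001$; the analytic Schwarz-lemma step and the application of Lemma~\ref{lem:rho-exp-UB} are then essentially mechanical.
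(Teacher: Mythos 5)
Your plan matches the paper's proof step-for-step: shift to $\eta_j=r_j+t_j\beta_1-\eta_0$, $\zeta_j=s_j+t_j\beta_2-\zeta_0$, factor out $\alpha_1^{M_1}\alpha_2^{M_2}\alpha_3^{M_3}$ using $\sum_i\lambda_i=0$, expand $\Delta$ by multilinearity over $(1+\theta_{i,j}\Lambda')$ into $\sum_{\cI}(\Lambda')^{N-|\cI|}\Delta_{\cI}$, apply Schwarz's lemma to each $\Psi_{\cI}$, bound the $N!$-term Leibniz expansion on $|x|=\rho$ via $|\theta_{i,j}|\le 1$ and Lemma~\ref{lem:lemG}, then invoke Lemmas~\ref{lem:7.2} and~\ref{lem:rho-exp-UB}.

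A few of your attributions are off, though none fatally. There is no ``discrepancy between $\Theta(K,\cdot)$ and $\Theta(K-1,\cdot)$'' to absorb: since $k_i+m_i\le K-1$, Lemma~\ref{lem:7.2} applied with its degree parameter set to $K-1$ yields $J_{\cI}\ge\Theta(K-1,|\cI|)$ directly, which is exactly what Lemma~\ref{lem:rho-exp-UB} consumes. The $+0.001$ does not come from that (nonexistent) slippage either; it is the estimate $\rho G_3|\Lambda|/b_3<0.001$, which appears when one uses $b_3|\log\alpha_3|=b_1|\log\alpha_1|+b_2|\log\alpha_2|\pm|\Lambda|$ to convert the $G_3\bigl(\beta_1|\log\alpha_1|+\beta_2|\log\alpha_2|\bigr)$ piece of the exponential bound into $G_3\bigl(|\log\alpha_3|+|\Lambda|/b_3\bigr)$. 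The passage from the $\bigl(1+\tfrac{2}{L}-\tfrac{6}{KL}-\tfrac{1}{3L^2}\bigr)$ of Lemma~\ref{lem:rho-exp-UB} to the $\bigl(1-\tfrac{2}{3L}-\tfrac{2}{3KL}-\tfrac{1}{3L^2}-\tfrac{16}{3K^2L}\bigr)$ of the proposition is simply the subtraction of $\log(\rho)\sum_i(k_i+m_i)=\tfrac{2N(K-1)}{3}\log\rho$ coming from the $\rho^{k_i+m_i}$ factors in $\max_{|x|=\rho}|\Psi_{\cI}(x)|$, again not a $\Theta$ correction. Finally, Lemma~\ref{lem:Stir} plays no role in this proof; the factorial product is folded into $b$ via $\sum k_i=\sum m_i=N(K-1)/3$ and the definition~\eqref{eq:b-defn} directly.
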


\begin{proof}
We start by proving that $\left| \theta_{i,j} \right| \leq 1$.

Since $b_{3}$, $L$ and $\left| \Lambda \right|$ are all positive, $0 \leq t_{j} \leq T$,
and $\left| \lambda_{i} \right| \leq L/2$, we have
\[
\left| \theta_{i,j} \right| \leq \frac{e^{x}-1}{xe^{x}}
\hspace*{3.0mm} \text{ where } x= \frac{LT\left| \Lambda \right|}{2b_{3}}>0.
\]

Observe that $\left( e^{x}-1 \right)/\left( xe^{x} \right)$ is a decreasing function for
$x>0$, since its derivative is $\left( 1+x-e^{x} \right)/\left( x^{2}e^{x} \right)$.
By L'H\^{o}pital's rule, we find that $\lim_{x \rightarrow 0^{+}}
\left( e^{x}-1 \right)/\left( xe^{x} \right)
=\lim_{x \rightarrow 0^{+}} \left( 1+x \right)^{-1}=1$.
Hence,
\[
\left| \theta_{i,j} \right| \leq 1.
\]

Let
\[
\eta_{j}=r_{j}+t_{j}\beta_{1}-\eta_{0} \qquad \text{ and} \qquad \zeta_{j}=s_{j}+t_{j}\beta_{2}-\zeta_{0},
\]
so $\left| \eta_{j} \right| \leq \eta_{0}$ and $\left| \zeta_{j} \right| \leq \zeta_{0}$.
Since,
\[
\binom{r_{j}b_{3}'+t_{j}b_{1}'}{k_{i}}
=\binom{b_{3}' \left( \eta_{j}+\eta_{0} \right)}{k_{i}}
= \frac{{b_{3}'}^{k_{i}}}{k_{i}!} {\eta_{j}}^{k_{i}}+ \text{terms in $\eta_{j}$ of degree less than $k_{i}$},
\]
and similarly for $\binom{s_{j}b_{3}''+t_{j}b_{2}''}{m_{i}}$, using the multilinearity
of determinants we obtain the formula
\[
\Delta = \det \left( \frac{{b_{3}'}^{k_{i}}{b_{3}''}^{m_{i}}}{k_{i}!\, m_{i}!}
{\eta_{j}}^{k_{i}} {\zeta_{j}}^{m_{i}}{\alpha_{1}}^{\ell_{i} r_{j}}{\alpha_{2}}^{\ell_{i} s_{j}}{\alpha_{3}}^{\ell_{i} t_{j}}
\right).
\]

Combining this with \eqref{eq:lambdaPrime-rel}, along with the definitions of
$\lambda_{i}$, $M_{1}$, $M_{2}$ and $M_{3}$, it follows that
\[
\Delta = {\alpha_{1}}^{M_{1}} {\alpha_{2}}^{M_{2}} {\alpha_{3}}^{M_{3}}
\det \left( \frac{ {b_{3}'}^{k_{i}}{b_{3}''}^{m_{i}}}{k_{i}!\, m_{i}!}
\eta_{j}^{k_{i}} \zeta_{j}^{m_{i}}
\alpha_{1}^{\lambda_{i}(r_{j}+t_{j}\beta_{1})} \alpha_{2}^{\lambda_{i}(s_{j}+t_{j}\beta_{2})} 
\left( 1+\Lambda'\theta_{i,j} \right)
\right).
\]

Since $\sum_{i}\lambda_{i}=0$, we deduce from this and the definitions of $\eta_{j}$
and $\zeta_{j}$ that
\[
\Delta = \alpha_{1}^{M_{1}} \alpha_{2}^{M_{2}} \alpha_{3}^{M_{3}} 
\det \left( \frac{{b_{3}'}^{k_{i}}{b_{3}''}^{m_{i}}}{k_{i}!\, m_{i}!}
{\eta_{j}}^{k_{i}} {\zeta_{j}}^{m_{i}}
\alpha_{1}^{\lambda_{i}\eta_{j}} \alpha_{2}^{\lambda_{i}\zeta_{j}} 
\left( 1+\Lambda'\theta_{i,j} \right)
\right).
\]

Expanding this determinant, we obtain
\begin{equation}
\label{eq:delta-sum}
\Delta = {\alpha_{1}}^{M_{1}} {\alpha_{2}}^{M_{2}}{\alpha_{3}}^{M_{3}}
\sum_{\cI \subseteq \cN}(\Lambda')^{N-\left| \cI \right|}\Delta_{\cI},
\end{equation}
where $\cI$ runs over all subsets of $\cN=\{ 1,\ldots,N \}$ and $\Delta_{\cI}$
is defined in \eqref{eq:deltaI-defn}.

From Schwarz' Lemma (see, for example, Lemma~2.3 on page~37 of \cite{W}), we have
\begin{equation}
\label{eq:Psi1-ub}
\left| \Psi_{\cI}(1) \right| \leq \rho^{-J_{\cI}} \cdot \max_{\left| x \right| =\rho} \left| \Psi_{\cI}(x) \right|,
\end{equation}
recalling that $J_{\cI} = \ord_{x=0} \left( \Psi_{\cI}(x) \right)$.

Since $\left| \theta_{i,j} \right| \leq 1$, expanding the determinant $\Psi_{\cI}$
shows that
\[
\left| \Psi_{\cI}(x) \right|
\leq N! \max_{\sigma\in \fS (\cN)} \left| \prod_{i=1}^{N} \phi_{i} \left( x\eta_{\sigma(i)}, x\zeta_{\sigma(i)} \right) \right|,
\]
where $\fS(\cN)$ is the group of all permutations of $\cN$. For any $\sigma \in \fS(\cN)$
and any $x$ satisfying $|x| \leq \rho$, we also have
\[
\left| \prod_{i=1}^{N} \phi_{i} \left( x\eta_{\sigma(i)}, x\zeta_{\sigma(i)} \right) \right|
\leq \frac{ {b_{3}'}^{\sum k_{i}}{b_{3}''}^{\sum m_{i}}}{\prod k_{i}!\, \prod m_{i}!}
\left( \rho \eta_{0} \right)^{ \sum k_{i}} \left( \rho \zeta_{0} \right)^{ \sum m_{i}}
\left| \alpha_{1}^{\sum \lambda_{i}\eta_{\sigma(i)}x} \right|
\cdot \left| \alpha_{2}^{\sum \lambda_{i}\zeta_{\sigma(i)}x} \right|,
\]
since $\left| \eta_{j} \right| \leq \eta_{0}$ and $\left| \zeta_{j} \right| \leq \zeta_{0}$.

Note that all the sums and products on right-hand side are for $i=1,\ldots,N$.
This will also be the case for all sums and products that follow which have $i$
as the index, but without explicit lower and upper bounds on $i$.

Since $\left| \exp(z) \right| \leq \exp \left( \left| z \right| \right)$, it follows that
\begin{align}
\label{eq:Psi-ub}
\max_{\left| x \right| =\rho} \left|\Psi_{\cI} (x) \right|
\leq & N! \frac{{b_{3}'}^{\sum k_{i}}{b_{3}''}^{\sum m_{i}}}{\prod k_{i}!\, \prod m_{i}!}
\left( \rho \eta_{0} \right)^{\sum k_{i}} \left( \rho \zeta_{0} \right)^{\sum m_{i}} \\
& \times \max_{\sigma\in \fS (\cN)} \exp
\left\{ 
\rho \left(
\left| \sum \lambda_{i} \eta_{\sigma(i)} \right| \left| \log \alpha_{1} \right|
+ \left| \sum \lambda_{i} \zeta_{\sigma(i)} \right| \left| \log \alpha_{2} \right|
\right)
\right\}. \nonumber
\end{align}

Using the relation $\sum_{i=1}^{N} \lambda_{i}=0$, we get
\begin{align*}
\sum_{i=1}^{N} \lambda_{i} \eta_{\sigma(i)}
& = \sum_{i=1}^{N} \lambda_{i} \left( r_{\sigma(i)}+t_{\sigma(i)}\beta_{1} \right) \\
& = \sum_{i=1}^{N}\left( \ell_{i} - \frac{L-1}{2} \right)r_{\sigma(i)}
    + \beta_{1} \sum_{i=1}^{N}\left( \ell_{i} - \frac{L-1}{2} \right)t_{\sigma(i)} \\
& = \sum_{i=1}^{N} \ell_{i}r_{\sigma(i)} - M_{1}
    + \beta_{1} \sum_{i=1}^{N}\ell_{i}t_{\sigma(i)}- \beta_{1}M_{3}.
\end{align*}

Thus, from Lemma~\ref{lem:lemG},
\[
\left| \sum_{i=1}^{N} \lambda_{i} \eta_{\sigma(i)} \right| \leq G_{1}+\beta_{1} G_{3}.
\]

In a similar way,
\[
\left| \sum_{i=1}^{N} \lambda_{i} \zeta_{\sigma(i)} \right| \leq G_{2}+\beta_{2} G_{3}.
\]

Recalling that $b_{3} \left| \log \alpha_{3} \right|
=b_{1} \left| \log \alpha_{1} \right| + b_{2} \left| \log \alpha_{2} \right| \pm \left| \Lambda \right|$,
it follows that
\begin{align}
\label{eq:exp-sum-ub}
& \exp
\left\{ 
\rho 
\left(
   \left| \sum \lambda_{i} \eta_{\sigma(i)} \right| \left| \log \alpha_{1} \right|
   + \left| \sum \lambda_{i} \zeta_{\sigma(i)} \right| \left| \log \alpha_{2} \right|
\right)
\right\} \\
\leq &
\exp
\left\{ 
\rho
\left(
\left( G_{1}+\beta_{1} G_{3} \right) \left| \log \alpha_{1} \right|
+ \left( G_{2}+\beta_{2} G_{3} \right) \left| \log \alpha_{2} \right|
\right)
\right\} \nonumber \\
\leq &
\exp
\left\{ 
\rho \left(
   G_{1} \left| \log \alpha_{1} \right|
  +G_{2} \left| \log \alpha_{2} \right|
  +G_{3} \left( \left| \log \alpha_{3} \right| + \frac{\left| \Lambda \right|}{b_{3}} \right)
\right)
\right\}. \nonumber
\end{align}

Recalling \eqref{eq:star} and applying the definitions for the quantities that
arise, we have
\begin{align*}
\rho G_{3} \frac{\left| \Lambda \right|}{b_{3}}
&= \rho g \frac{NLT}{b_{3}} \frac{\left| \Lambda \right|}{2}
= \frac{\rho gK(K+1) L}{2} \frac{\Lambda'}{e^{LT\left| \Lambda \right|/(2b_{3})}}
\leq \frac{\rho gK(K+1) L \Lambda'}{2}
\leq \frac{\rho K(K+1) L \Lambda'}{8} \\
&< \frac{\rho K(K+1) L}{8\rho^{KL}}.
\end{align*}

By looking at the partial derivatives of this last expression with respect to
$\rho$, $K$ and $L$, we see that it is a non-increasing function in each of these
provided that $KL\log(\rho) \geq 2$ and $KL \geq 1$. These conditions hold for
$K \geq 3$, $L \geq 5$ and $\rho \geq 2$. For $K=3$, $L=5$ and $\rho=2$, we find
that $\rho K^{2} L/ \left( 4\rho^{KL} \right)<0.0005$. Hence
\begin{equation}
\label{eq:rhoG2-ub}
\rho G_{3} \frac{\left| \Lambda \right|}{b_{3}}<0.001.
\end{equation}

Combining, \eqref{eq:delta-sum}, \eqref{eq:Psi1-ub}, \eqref{eq:Psi-ub},
\eqref{eq:exp-sum-ub} and \eqref{eq:rhoG2-ub}, we find that
condition~\eqref{eq:star} implies the upper bound
\begin{align*}
\log \left| \Delta \right|
< & \sum_{i=1}^{3} M_{i} \log \left| \alpha_{i} \right| + \rho \sum_{i=1}^{3} G_{i} \left| \log \alpha_{i} \right|
    +\log(N!)+N\log(2)
    + \log(\rho) \sum_{i} \left( k_{i}+m_{i} \right) \\
  & + \log \left( \frac{\left( b_{3}'\eta_{0} \right)^{\sum k_{i}}}{\prod k_{i}!}
    \frac{\left( b_{3}''\zeta_{0} \right)^{\sum m_{i}}}{\prod m_{i}!}
    \max_{\cI \subseteq \cN} \frac{|\Lambda'|^{N-|\cI|}}{\rho^{J_{\cI}}} \right) + 0.001. \nonumber
\end{align*}

Under condition~\eqref{eq:star}, we have
\begin{equation}
\label{eq:lambdaP-UB}
\frac{|\Lambda'|^{N-|\cI|}}{\rho^{J_{\cI}}}
\leq \rho^{-KL(N-|\cI|)-J_{\cI}}
\end{equation}
(note that if $N=|\cI|$, then we need $\leq$ here, rather than the $<$ in
\eqref{eq:star}).

From Lemma~\ref{lem:7.2}, we obtain $J_{\cI} \geq \Theta \left( K-1, \left| \cI \right| \right)$.
Note that our matrix is not of exactly the same form as used in Lemma~\ref{lem:7.2},
as we have functions in the entries, $\Psi_{\cI}(x)_{i,j}$ when $i \not\in \cI$,
rather than complex numbers.
But since the $\phi_{i}$'s are the product of polynomials and analytic functions
we can write them as power series (some possibly truncated). Since $\Psi_{\cI}(x)$
is a determinant, it is multilinear,  these entries cannot
reduce $J_{\cI}$ (see the proof of Lemma~7.2 of \cite{W} for more details).

So applying equation~\eqref{eq:lambdaP-UB}, Lemma~\ref{lem:rho-exp-UB}
and using the relations
\begin{align*}
\sum_{i=1}^{N}\left( k_{i}+m_{i} \right)
&= L\sum_{k=0}^{K-1} \left( \sum_{m=0}^{K-1-k} k+m \right)
= L\sum_{k=0}^{K-1} \frac{(K-1+k)(K-k)}{2}
=\frac{KL(K+1)(K-1)}{3} \\
&= \frac{2N(K-1)}{3},
\end{align*}
we obtain
\begin{align*}
& \log(\rho) \sum_{i} \left( k_{i}+m_{i} \right)
+ \log \left( \max_{\cI \subseteq \cN} \frac{|\Lambda'|^{N-|\cI|}}{\rho^{J_{\cI}}} \right) \\
& \leq \log(\rho) \left( \frac{2N(K-1)}{3} -KL(N-|\cI|) - J_{\cI} \right) \\
& \leq \log(\rho) \left( \frac{2N(K-1)}{3} -KL(N-|\cI|) - \Theta \left( K-1, \left| \cI \right| \right) \right) \\
& \leq \log(\rho) \left( \frac{2N(K-1)}{3} - \frac{N^{2}}{2K} \left( 1+\frac{2}{L}-\frac{6}{KL}-\frac{1}{3L^{2}} \right) \right) \\
& = -\log(\rho) \frac{N^{2}}{2K} \left( 1-\frac{2}{3L}-\frac{2}{3KL}-\frac{1}{3L^{2}}-\frac{16}{3K^{2}L} \right).
\end{align*}

Also note that
\begin{equation}
\label{eq:ki-sum}
\sum_{i=1}^{N} k_{i} = L\sum_{k=0}^{K-1} (K-k)k=\frac{K(K-1)(K+1)L}{6}=\frac{N(K-1)}{3}.
\end{equation}

So using the definition of $b$ in \eqref{eq:b-defn}, we see that
\begin{align*}
b^{N(K-1)/3}
&= \left( b_{3}'\eta_{0} \right)^{N(K-1)/3} \left( b_{3}''\zeta_{0} \right)^{N(K-1)/3}
\left( \prod_{k=1}^{K-1}(k!)^{K-k} \right)^{-2L} \\
&= \frac{\left( b_{3}'\eta_{0} \right)^{\sum k_{i}}}
{\prod k_{i}!}
\frac{\left( b_{3}''\zeta_{0} \right)^{\sum m_{i}}}
{\prod m_{i}!}.
\end{align*}

This completes the proof of the proposition.
\end{proof}

\subsection{A lower bound for $\left| \Delta \right|$}

Liouville's inequality is the key tool that we need to obtain a lower bound for
$\left| \Delta \right|$. The version of Liouville inequality that we use is the same as in
\cite{LMN} (p.~298--299) (also see Exercises~3.3(a) and 3.5 on pages~106--107 of \cite{W}).

\begin{lem}
\label{lem:Liou}
Let $\alpha_{1}$, $\alpha_{2}$ and $\alpha_{3}$ be non-zero algebraic numbers and a polynomial
$f\in \bbZ \left[ X_{1},X_{2},X_{3} \right]$
such that $f \left( \alpha_{1},\alpha_{2},\alpha_{3} \right) \neq 0$, then
\[
\left| f \left( \alpha_{1},\alpha_{2},\alpha_{3} \right) \right|
\geq |f|^{-{\cD+1}} \left( \alpha_{1}^{*} \right)^{d_{1}} \left( \alpha_{2}^{*} \right)^{d_{2}} \left( \alpha_{3}^{*} \right)^{d_{3}}
\times \exp \left\{-\cD\left( d_{1} \h \left( \alpha_{1} \right) + d_{2} \h \left( \alpha_{2} \right) + d_{3} \h \left( \alpha_{3} \right) \right) \right\},
\]
where 
$\cD=\left[ \bbQ \left( \alpha_{1},\alpha_{2},\alpha_{3} \right) : \bbQ \right]\bigm/
\left[ \bbR \left( \alpha_{1},\alpha_{2},\alpha_{3} \right) : \bbR \right]$,
\[
d_{i}=\deg_{X_{i}}f, \ \ i=1,2,3,
\qquad \left| f \right|=\max \left\{ \left| f \left( z_{1},z_{2},z_{3} \right) \right| : \left| z_{i} \right| \leq 1, \ i=1,2,3 \right\},
\]
and $\h(\alpha)$ is the absolute logarithmic height of the algebraic number $\alpha$,
and $\alpha^{*}=\max\{ 1, \left| \alpha \right| \}$.
\end{lem}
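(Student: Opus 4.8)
The plan is to derive the inequality from the product formula on the number field $K=\bbQ(\alpha_1,\alpha_2,\alpha_3)$, combined with crude upper bounds for $|f|$ at each place of $K$. Put $D=[\bbQ(\alpha_1,\alpha_2,\alpha_3):\bbQ]$ and $\chi=[\bbR(\alpha_1,\alpha_2,\alpha_3):\bbR]$, so $\cD=D/\chi$. Set $\gamma=f(\alpha_1,\alpha_2,\alpha_3)$, which is a nonzero element of $K^\times$ by hypothesis, and let $v_0$ be the archimedean place of $K$ corresponding to the given embedding $K\hookrightarrow\bbC$ through which the $\alpha_i$ are viewed as complex numbers; then $|\gamma|_{v_0}$ is exactly the left-hand side of the lemma, and the local degree $d_{v_0}=[K_{v_0}:\bbR]$ equals $[\bbR(\alpha_1,\alpha_2,\alpha_3):\bbR]=\chi$, so $\cD=D/d_{v_0}$. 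Writing $d_v=[K_v:\bbQ_v]$ for the local degrees, normalised so that $\sum_{v\mid p}d_v=D$ for each place $p$ of $\bbQ$ ($p=\infty$ included), the product formula $\sum_v d_v\log|\gamma|_v=0$ gives the key identity
\[
d_{v_0}\log|\gamma|_{v_0}=-\sum_{v\ne v_0}d_v\log|\gamma|_v .
\]

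Next I would bound $\log|\gamma|_v$ from above for every $v\ne v_0$. For a non-archimedean $v$, the ultrametric inequality together with $|c|_v\le 1$ for $c\in\bbZ$ gives $|\gamma|_v\le\max_{\mathbf e}\prod_{i=1}^3|\alpha_i|_v^{e_i}\le\prod_{i=1}^3\max\{1,|\alpha_i|_v\}^{d_i}$, the maximum running over the exponent vectors $\mathbf e=(e_1,e_2,e_3)$ occurring in $f$ (so $e_i\le d_i=\deg_{X_i}f$); hence $\log|\gamma|_v\le\sum_i d_i\log^+|\alpha_i|_v$. For an archimedean $v\ne v_0$, attached to an embedding $\sigma$, I would use the estimate $|f(z_1,z_2,z_3)|\le|f|\prod_i\max\{1,|z_i|\}^{d_i}$, valid for all $(z_1,z_2,z_3)\in\bbC^3$; taking $z_i=\sigma(\alpha_i)$ then yields $\log|\gamma|_v\le\log|f|+\sum_i d_i\log^+|\alpha_i|_v$. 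This last estimate is the only point requiring care: one proves it one variable at a time, the one-variable case being the observation (maximum modulus principle applied to $w\mapsto w^d g(1/w)$, whose sup-norm over $|w|\le1$ equals that of $g$) that for $g\in\bbC[X]$ of degree $\le d$ one has $|g(z)|\le\big(\sup_{|w|\le1}|g(w)|\big)\max\{1,|z|\}^d$ for every $z\in\bbC$.

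Substituting the two bounds into the key identity and collecting terms, I would use $\sum_{v\mid\infty,\,v\ne v_0}d_v=D-d_{v_0}$ for the coefficient of $\log|f|$, and, for each $i$, the identity $\sum_{v\ne v_0}d_v\log^+|\alpha_i|_v=D\,\h(\alpha_i)-d_{v_0}\log\alpha_i^{*}$, which is where the factors $\alpha_i^{*}=\max\{1,|\alpha_i|\}$ appear: they are precisely the contribution to $\h(\alpha_i)$ of the single place $v_0$ that has been left out. This produces
\[
d_{v_0}\log|\gamma|_{v_0}\ge -(D-d_{v_0})\log|f|+d_{v_0}\sum_{i=1}^3 d_i\log\alpha_i^{*}-D\sum_{i=1}^3 d_i\,\h(\alpha_i);
\]
dividing by $d_{v_0}=\chi$ and exponentiating gives $|\gamma|\ge|f|^{-(\cD-1)}\prod_i(\alpha_i^{*})^{d_i}\exp\{-\cD\sum_i d_i\,\h(\alpha_i)\}$. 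Finally, since $f$ is a nonzero polynomial with integer coefficients its Mahler measure is at least $1$, and the sup-norm on the unit polydisc dominates the Mahler measure (Jensen's formula), so $|f|\ge1$ and hence $|f|^{-(\cD-1)}\ge|f|^{-(\cD+1)}$; this yields the stated inequality. The main obstacle, such as it is, is thus the archimedean polynomial bound together with the bookkeeping that isolates the distinguished place $v_0$ from the remaining ones; everything else is a direct manipulation of the product formula and of the definition of the Weil height. This is in essence the argument of \cite{LMN} (pp.~298--299) and of Exercises~3.3(a) and~3.5 of \cite{W}.
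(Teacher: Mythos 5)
Your argument is correct and is exactly the product-formula/Liouville derivation that the paper attributes to \cite{LMN} and to Exercises~3.3(a) and 3.5 of \cite{W}; the paper itself records only the citation, not a self-contained proof. One small remark: the exponent of $|f|$ in the lemma as written is $-\cD+1=-(\cD-1)$, not $-(\cD+1)$ --- which is precisely what your computation produces directly, as confirmed by the coefficient $\cD-1$ on $\log\bigl|\widetilde{P}\bigr|$ in the application of this lemma inside Proposition~\ref{prop:M2} --- so your closing step invoking $|f|\ge 1$ to pass from exponent $-(\cD-1)$ to $-(\cD+1)$ is superfluous and would in fact weaken the estimate.
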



Using Lemma~\ref{lem:Liou}, we get the following lemma -- also see
Proposition~12.6 in \cite{BMS1}.

\begin{prop}
\label{prop:M2}
If $\Delta \neq 0$, then
\begin{align*}
\log \left| \Delta \right|
\geq & - \frac{\cD-1}{2}N\log(N) +\sum_{i=1}^{3} \left( M_{i} +G_{i} \right)\log \left| \alpha_{i} \right|
       -2\cD \sum_{i=1}^{3} G_{i} \h \left( \alpha_{i} \right) \\
     & - \frac{\cD-1}{3}(K-1)N\log(b).
\end{align*}
\end{prop}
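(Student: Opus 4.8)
The plan is to regard $\Delta$ as the value at $(\alpha_1,\alpha_2,\alpha_3)$ of a polynomial with integer coefficients and then invoke Liouville's inequality, Lemma~\ref{lem:Liou}. Expanding the determinant in \eqref{eq:delta-defn} by the Leibniz rule gives
\[
\Delta=\sum_{\sigma\in\fS(\cN)}\operatorname{sgn}(\sigma)\left(\prod_{i=1}^{N}\binom{r_{\sigma(i)}b_{3}'+t_{\sigma(i)}b_{1}'}{k_{i}}\binom{s_{\sigma(i)}b_{3}''+t_{\sigma(i)}b_{2}''}{m_{i}}\right)\alpha_{1}^{\sum_{i}\ell_{i}r_{\sigma(i)}}\alpha_{2}^{\sum_{i}\ell_{i}s_{\sigma(i)}}\alpha_{3}^{\sum_{i}\ell_{i}t_{\sigma(i)}},
\]
so that $\Delta=f(\alpha_{1},\alpha_{2},\alpha_{3})$ for an $f\in\bbZ[X_{1},X_{2},X_{3}]$, each monomial arising from a permutation $\sigma$ and each coefficient being a signed product of binomial coefficients.

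First I would control the exponents. Since the multiset $\{\ell_{1},\dots,\ell_{N}\}$ consists of each of $0,1,\dots,L-1$ repeated $K(K+1)/2$ times, and since distinctness of the triples $(r_{j},s_{j},t_{j})$ forces any fixed value of $r$ to occur at most $ST$ times among the $r_{j}$ (and similarly for $s$ and $t$), Lemma~\ref{lem:lemG} applies to each permuted tuple and yields $\bigl|\sum_{i}\ell_{i}r_{\sigma(i)}-M_{1}\bigr|\le G_{1}$, $\bigl|\sum_{i}\ell_{i}s_{\sigma(i)}-M_{2}\bigr|\le G_{2}$, $\bigl|\sum_{i}\ell_{i}t_{\sigma(i)}-M_{3}\bigr|\le G_{3}$. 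Hence every monomial of $f$ has $X_{i}$-degree in the interval $[M_{i}-G_{i},M_{i}+G_{i}]$; writing $P_{i}:=\lfloor M_{i}+G_{i}\rfloor$ we may therefore form $f^{*}(Y_{1},Y_{2},Y_{3}):=Y_{1}^{P_{1}}Y_{2}^{P_{2}}Y_{3}^{P_{3}}f(1/Y_{1},1/Y_{2},1/Y_{3})$, which lies in $\bbZ[Y_{1},Y_{2},Y_{3}]$ with $\deg_{Y_{i}}f^{*}\le P_{i}-(M_{i}-G_{i})\le 2G_{i}$, and we get the factorisation $\Delta=\alpha_{1}^{P_{1}}\alpha_{2}^{P_{2}}\alpha_{3}^{P_{3}}\,f^{*}(1/\alpha_{1},1/\alpha_{2},1/\alpha_{3})$. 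This is the point: the factored monomial will supply the term $\sum_i(M_i+G_i)\log|\alpha_i|$, while $f^{*}$ has small degree $\le 2G_i$, which will produce $-2\cD\sum_i G_i\h(\alpha_i)$.

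Next I would estimate the size $|f^{*}|$. On the distinguished boundary $|Y_{i}|=1$ one has $|f^{*}(Y)|=|f(1/Y_{1},1/Y_{2},1/Y_{3})|$, so by the maximum modulus principle $|f^{*}|=|f|=\sup_{|z_{i}|\le 1}|f(z)|$. To bound $|f|$ I would apply Hadamard's inequality to $\det(\cdot)$ row by row: this contributes the factor $N^{N/2}$ (responsible for the coefficient $\tfrac12$ of $N\log N$ in the statement), while on the torus each entry has modulus $\binom{r_{j}b_{3}'+t_{j}b_{1}'}{k_{i}}\binom{s_{j}b_{3}''+t_{j}b_{2}''}{m_{i}}\le\binom{(R-1)b_{3}'+(T-1)b_{1}'}{k_{i}}\binom{(S-1)b_{3}''+(T-1)b_{2}''}{m_{i}}$. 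Using $\binom{n}{k}\le n^{k}/k!$, the identities $2b_{3}'\eta_{0}=(R-1)b_{3}'+(T-1)b_{1}'$ and $2b_{3}''\zeta_{0}=(S-1)b_{3}''+(T-1)b_{2}''$, the relation $\sum_{i}k_{i}=\sum_{i}m_{i}=N(K-1)/3$ from \eqref{eq:ki-sum}, and the identity $b^{N(K-1)/3}=\dfrac{(b_{3}'\eta_{0})^{\sum k_{i}}}{\prod k_{i}!}\cdot\dfrac{(b_{3}''\zeta_{0})^{\sum m_{i}}}{\prod m_{i}!}$ (the content of Lemma~\ref{lem:Stir}(b)), this gives $\log|f^{*}|=\log|f|\le\tfrac{N}{2}\log N+\tfrac{2N(K-1)}{3}\log 2+\tfrac{N(K-1)}{3}\log b$.

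Finally I would apply Liouville's inequality (Lemma~\ref{lem:Liou}) to $f^{*}$ evaluated at $(1/\alpha_{1},1/\alpha_{2},1/\alpha_{3})$. In both the real and imaginary cases $|\alpha_{i}|\ge 1$, so $(1/\alpha_{i})^{*}=1$; also $\h(1/\alpha_{i})=\h(\alpha_{i})$ and $\deg_{Y_{i}}f^{*}\le 2G_{i}$, so the inequality yields a lower bound for $\log|f^{*}(1/\alpha_{1},1/\alpha_{2},1/\alpha_{3})|$ of the shape $-c\log|f^{*}|-2\cD\sum_{i}G_{i}\h(\alpha_{i})$; adding back $\sum_{i}P_{i}\log|\alpha_{i}|$ (and noting $P_{i}=\lfloor M_{i}+G_{i}\rfloor$ differs from $M_{i}+G_{i}$ by less than $1$, an error negligible against the other terms and identically zero in the imaginary case) produces the asserted inequality. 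The main obstacle is the numerical bookkeeping needed to extract the optimal constants $\tfrac{\cD-1}{2}$ and $\tfrac{\cD-1}{3}$: one must verify the exponent ranges $[M_{i}-G_{i},M_{i}+G_{i}]$ precisely so that $\deg_{Y_{i}}f^{*}$ is genuinely at most $2G_{i}$, carry out the conversion of $\prod_{i}k_{i}!\,\prod_{i}m_{i}!$ into $\log b$ through Lemma~\ref{lem:Stir}, and use the sharp form of Liouville's inequality (the version in \cite{LMN}, which exploits the pairing of the evaluation place with its complex conjugate and the fact that there are at most $\cD$ conjugates), for which the cruder $N!$ bound and the crude $|f|^{-(\cD+1)}$ estimate would not suffice.
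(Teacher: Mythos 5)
Your outline follows the paper's strategy (Leibniz expansion, Lemma~\ref{lem:lemG} for exponent control, pulling out $\alpha_1^{P_1}\alpha_2^{P_2}\alpha_3^{P_3}$, then Hadamard and Liouville), but there is a quantitative gap in the bound for $|f|$ that leaves a stray term in the final inequality. Bounding each entry directly via $\binom{n}{k}\leq n^k/k!$ with $n=(R-1)b_3'+(T-1)b_1'=2b_3'\eta_0$ (and analogously with $2b_3''\zeta_0$) introduces the extra factor $2^{\sum_ik_i+\sum_im_i}=2^{2N(K-1)/3}$, so that, as you explicitly wrote, $\log|f|\leq\tfrac{N}{2}\log N+\tfrac{2N(K-1)}{3}\log 2+\tfrac{N(K-1)}{3}\log b$. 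Propagating this through Liouville leaves an extra $-(\cD-1)\tfrac{2N(K-1)}{3}\log 2$ in the lower bound for $\log|\Delta|$, which the Proposition does not contain and which cannot be absorbed elsewhere. The paper avoids this factor of $2$ by a re-centering step: since for each fixed $(m,\ell)$ all rows $(k,m,\ell)$ with $0\leq k\leq K-1-m$ are present and the transition between the bases $\bigl\{\binom{x}{k}\bigr\}_k$ and $\bigl\{(x-\eta)^k/k!\bigr\}_k$ (and similarly in $y$) is unitriangular, one has $P(z_1,z_2,z_3)=\det\Bigl(\tfrac{(r_jb_3'+t_jb_1'-\eta)^{k_i}}{k_i!}\,\tfrac{(s_jb_3''+t_jb_2''-\zeta)^{m_i}}{m_i!}\,z_1^{\ell_ir_j}z_2^{\ell_is_j}z_3^{\ell_it_j}\Bigr)$ for any $\eta,\zeta$; choosing $\eta=b_3'\eta_0$ and $\zeta=b_3''\zeta_0$ centres each linear form so that $|r_jb_3'+t_jb_1'-\eta|\leq b_3'\eta_0$, halving the bound and yielding $|P|\leq N^{N/2}b^{(K-1)N/3}$ exactly. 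This re-centering is the essential step your argument is missing; without it the stated constants $\tfrac{\cD-1}{2}$ and $\tfrac{\cD-1}{3}$ cannot be achieved.

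A smaller issue is the floor: dismissing $(M_i+G_i)-\lfloor M_i+G_i\rfloor$ as ``negligible'' does not deliver the exact inequality claimed. The paper sets $V_i=\lfloor M_i+G_i\rfloor$, $U_i=\lceil M_i-G_i\rceil$, applies Liouville with $\deg_{X_i}\widetilde P\leq V_i-U_i$, and uses $0\leq\log|\alpha_i|\leq\cD\h(\alpha_i)$ to deduce $V_i\log|\alpha_i|-\cD(V_i-U_i)\h(\alpha_i)\geq(M_i+G_i)\log|\alpha_i|-2\cD G_i\h(\alpha_i)$, which closes this part of the argument without slack.
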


\begin{proof}
From \eqref{eq:delta-defn}, we have $\Delta=P\left( \alpha_{1},\alpha_{2},\alpha_{3} \right)$
where $P\in \bbZ \left[ X_{1},X_{2},X_{3} \right]$ is given by
\[
P \left( X_{1},X_{2},X_{3} \right) =
\sum_{\sigma\in \fS_{N}} {\rm sg}(\sigma)
\left( \prod_{i=1}^{N} \binom{r_{\sigma(i)} b_{3}'+t_{\sigma(i)} b_{1}'}{k_{i}}
\binom{s_{\sigma(i)} b_{3}''+t_{\sigma(i)} b_{2}''}{m_{i}} \right)
X_{1}^{n_{r,\sigma}}X_{2}^{n_{s,\sigma}}X_{3}^{n_{t,\sigma}},
\]
where ${\rm sg}(\sigma)$ is the signature of the permutation, $\sigma$,
\[
n_{r,\sigma}=\sum_{i=1}^{N} \ell_{i} r_{\sigma(i)}, \quad
n_{s,\sigma}=\sum_{i=1}^{N} \ell_{i} s_{\sigma(i)} \quad \text{ and } \quad
n_{t,\sigma}=\sum_{i=1}^{N} \ell_{i} t_{\sigma(i)}.
\]

By Lemma~\ref{lem:lemG},
\[
\left| \deg_{X_{i}} P - M_{i} \right| \leq G_{i}, \quad \text{for $i=1,2,3$.}
\]

Let
\[
V_{i} = \lfloor M_{i}+G_{i}\rfloor, \qquad U_{i} = \lceil M_{i}-G_{i}\rceil,
\quad \text{$i=1, 2, 3$,}
\]
then
\[
\Delta = \alpha_{1}^{V_{1}} \alpha_{2}^{V_{2}} \alpha_{3}^{V_{3}}
\widetilde{P} \left( \alpha_{1}^{-1},\alpha_{2}^{-1},\alpha_{3}^{-1} \right),
\]
where
\[
\deg_{X_{i}} \widetilde{P} \leq V_{i}-U_{i}, \quad \text{$i=1, 2, 3$.}
\]

By our Liouville estimate
\[
\log \left| \widetilde{P} \left( \alpha_{1}^{-1}, \alpha_{2}^{-1}, \alpha_{3}^{-1} \right) \right|
\geq -(\cD-1)\log \left| \widetilde{P} \right| - \cD\sum_{i=1}^{3} \left( V_{i}-U_{i} \right) \h \left( \alpha_{i} \right),
\]
recalling from our assumptions at the start of Section~\ref{sect:result} that
$\left| \alpha_{i} \right| \geq 1$, and hence $\left( \alpha_{i}^{-1} \right)^{*}=1$, for $i=1,2,3$.

Now we have to find an upper bound for $\left| \widetilde{P} \right|$ (or for $|P|$,
which is equal to $\left| \widetilde{P} \right|$). By the multilinearity of the
determinant, for all $\eta$, $\zeta \in \bbC$,
\[
P \left( z_{1},z_{2},z_{3} \right)
= \det \left(
\frac{\left( r_{j} b_{3}'+t_{j}b_{1}'-\eta \right)^{k_{i}}}{k_{i}!}
\frac{\left( s_{j} b_{3}''+t_{j}b_{2}''-\zeta \right)^{m_{i}}}{m_{i}!}
z_{1}^{\ell_{i} r_{j}} z_{2}^{\ell_{i} s_{j}} z_{3}^{\ell_{i} t_{j}}
\right).
\]

Choose
\[
\eta = \frac{(R-1)b_{3}'+(T-1)b_{1}'}{2}, \quad
\zeta = \frac{(S-1)b_{3}''+(T-1)b_{2}''}{2}.
\]

Notice that, for $1 \leq j \leq N$,
\[
\left| r_{j} b_{3}'+t_{j}b_{1}'-\eta \right|^{k_{i}}
\leq \left( \frac{(R-1)b_{3} +(T-1)b_{1}}{2d_{1}} \right)^{k_{i}}\!\!,
\quad
\left| s_{j} b_{3}''+t_{j}b_{2}''-\zeta \right|^{k_{i}}
\leq \left( \frac{(S-1)b_{3}+(T-1)b_{2}}{2d_{2}} \right)^{m_{i}}
\]
and recall from \eqref{eq:ki-sum} that
\[
\sum_{i=1}^{N}k_{i} = \sum_{i=1}^{N}m_{i} = \frac{N(K-1)}{3}.
\]

So Hadamard's inequality implies
\begin{align*}
|P| \leq
& N^{N/2} \left( \frac{(R-1)b_{3}+(T-1)b_{1}}{2d_{1}} \right)^{(K-1)N/3} 
\left( \frac{(S-1)b_{3}+(T-1)b_{2}}{2d_{2}} \right)^{(K-1)N/3} \\
& \times \left( \prod_{i=1}^{N} k_{i}! \right)^{-1}
         \left( \prod_{i=1}^{N} m_{i}! \right)^{-1}.
\end{align*} 

Recalling the definition of $b$, we get
\[
|P| \leq N^{N/2}b^{(K-1)N/3}.
\]

Collecting all the above estimates, we find
\[
\log \left| \Delta \right|
\geq -(\cD-1)\left( \log \left( N^{N/2} \right) + \frac{(K-1)N}{3}\log b \right)
 - \cD \sum_{i=1}^{3} \left( V_{i}-U_{i} \right) \h \left( \alpha_{i} \right)
 +\sum_{i=1}^{3} V_{i}\log \left| \alpha_{i} \right|.
\]

The inequalities $\cD\h \left( \alpha_{i} \right) \geq \log \left| \alpha_{i} \right| \geq 0$
imply
\[
V_{i}\log \left| \alpha_{i} \right| - \cD \left( V_{i}-U_{i} \right) \h \left( \alpha_{i} \right)
\geq \left( M_{i}+G_{i} \right)\log \left| \alpha_{i} \right| - 2\cD G_{i} \h \left( \alpha_{i} \right)
\]
and the result follows.
\end{proof}

\subsection{Synthesis}
Here we combine the upper and lower bounds for $\left| \Delta \right|$ that we obtained in
the two previous subsections.

\begin{prop}
\label{prop:M3}
With the previous notation, if $K \geq 3$, $L \geq 5$, $\rho \geq 2$, and if
$\Delta \neq 0$ then
\[
\Lambda' \geq \rho^{-KL}
\]
provided that
\[
\left( \frac{KL}{2} + \frac{L}{2} - 0.37K - 2 \right) \log \rho
\geq (\cD+1)\log N + gL \left( a_{1}R+a_{2}S+a_{3}T \right) + \frac{2\cD (K-1)}{3}\log b,
\]
where the $a_{i}$ are positive real numbers which satisfy
\[
a_{i} \geq \rho \left| \log \alpha_{i} \right| - \log \left| \alpha_{i} \right|
+ 2\cD \h \left( \alpha_{i} \right)
\qquad \text{for $i=1,2,3$.}
\]
\end{prop}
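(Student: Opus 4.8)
The plan is to argue by contradiction, pitting the upper bound for $\left| \Delta \right|$ of Proposition~\ref{prop:M1} against the lower bound of Proposition~\ref{prop:M2}. Assume, contrary to the conclusion, that $\Lambda' < \rho^{-KL}$, i.e. that \eqref{eq:star} holds. Then Proposition~\ref{prop:M1} applies and gives the stated (strict) upper bound for $\log\left| \Delta \right|$; and since $\Delta \neq 0$, Proposition~\ref{prop:M2} gives a lower bound. Chaining these as ``lower bound $\leq \log\left| \Delta \right| <$ upper bound'' yields one inequality in which the terms $\sum_{i=1}^{3}M_{i}\log\left| \alpha_{i} \right|$ cancel between the two sides. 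The rest is bookkeeping: the remaining $\alpha_{i}$-contributions combine as $\sum_{i=1}^{3}G_{i}\bigl( \rho\left| \log\alpha_{i} \right| - \log\left| \alpha_{i} \right| + 2\cD\h(\alpha_{i}) \bigr) \leq \sum_{i=1}^{3}G_{i}a_{i}$, and since each $G_{i}$ equals $\frac{NL}{2}g$ times $R$, $S$ or $T$ (see \eqref{eq:g-defn}; note $g \geq 1/6 > 0$ as $RST \geq N$), this is $\frac{NLg}{2}\left( a_{1}R+a_{2}S+a_{3}T \right)$; the two $\log b$ terms combine via $\frac{N(K-1)}{3}+\frac{(\cD-1)(K-1)N}{3}=\frac{\cD(K-1)N}{3}$; and the $\frac{N^{2}}{2K}(\cdots)\log\rho$ term of Proposition~\ref{prop:M1} is transposed to the left. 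This leaves
\begin{align*}
\frac{N^{2}}{2K}\left( 1-\frac{2}{3L}-\frac{2}{3KL}-\frac{1}{3L^{2}}-\frac{16}{3K^{2}L} \right)\log\rho
< {} & \frac{NLg}{2}\left( a_{1}R+a_{2}S+a_{3}T \right) + \frac{\cD(K-1)N}{3}\log b \\
& + \frac{\cD-1}{2}N\log N + \log(N!) + N\log 2 + 0.001 .
\end{align*}

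Next I would feed in the hypothesis. Since $\log\rho > 0$ and $N=K(K+1)L/2$, multiplying the hypothesis through by $N/2$ bounds $\frac{NLg}{2}\left( a_{1}R+a_{2}S+a_{3}T \right)+\frac{\cD(K-1)N}{3}\log b$ above by $\frac{N}{2}\bigl( \frac{KL}{2}+\frac{L}{2}-0.37K-2 \bigr)\log\rho - \frac{(\cD+1)N\log N}{2}$. Substituting this into the display and using $-\frac{\cD+1}{2}N\log N+\frac{\cD-1}{2}N\log N=-N\log N$, together with the elementary bound $\log(N!)\leq N\log N - N + \frac{1}{2}\log N + 1$ and the fact that $N\geq 3\cdot 4\cdot 5/2 = 30$ (so that $-N(1-\log 2)+\frac{1}{2}\log N+1+0.001<0$), every lower-order term is absorbed and one is left needing only
\[
\frac{N^{2}}{2K}\left( 1-\frac{2}{3L}-\frac{2}{3KL}-\frac{1}{3L^{2}}-\frac{16}{3K^{2}L} \right) \geq \frac{N}{2}\left( \frac{KL}{2}+\frac{L}{2}-0.37K-2 \right).
\]
Using $N/K=(K+1)L/2$, this collapses to the elementary inequality $(K+1)\bigl( \frac{2}{3}+\frac{2}{3K}+\frac{1}{3L}+\frac{16}{3K^{2}} \bigr) \leq 0.74K+4$.

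It remains to check this last inequality for all integers $K\geq 3$, $L\geq 5$. One verifies by a short calculation that it always holds on this range, the tightest case being $K=3$, $L=5$, where the left side equals $836/135 \approx 6.19$, which is indeed less than $6.22$. Consequently the two preceding displays are mutually contradictory, and this contradiction forces $\Lambda' \geq \rho^{-KL}$, as required. The only genuine (as opposed to clerical) content of the argument is this elementary inequality; it is essentially sharp at $K=3$, $L=5$ — which is precisely why the constraints $K\geq 3$, $L\geq 5$ and the constant $0.37$ appear in the statement — so the main task is simply to confirm that the constants chosen in the hypothesis leave exactly enough slack to absorb all the accumulated lower-order terms.
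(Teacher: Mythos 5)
Your proof is correct and follows essentially the same route as the paper's: combine Propositions~\ref{prop:M1} and \ref{prop:M2} under the contrary assumption $\Lambda'<\rho^{-KL}$, cancel $\sum M_{i}\log|\alpha_{i}|$, bound the $G_{i}$-sum by $\frac{NLg}{2}(a_{1}R+a_{2}S+a_{3}T)$, merge the $\log b$ terms, absorb the factorial and constant contributions (valid since $N\geq 30$), and reduce everything to a single elementary inequality in $K$ and $L$ that holds with a little room to spare at the extremal point $K=3,\,L=5$. The only cosmetic difference is that you multiply the hypothesis through by $N/2$ and compare coefficients of $\log\rho$, whereas the paper divides the combined inequality by $N/2$ first and then verifies $\frac{1}{3}+\frac{1}{6L}\leq 0.37$ and $\frac{2}{3}+\frac{3}{K}+\frac{1}{6L}+\frac{8}{3K^{2}}\leq 2$ separately; these are the same bookkeeping.
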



\begin{proof}
Under the hypotheses of the Propositions~\ref{prop:M1} and \ref{prop:M2} (which
include the hypothesis that $\Lambda'<\rho^{-KL}$ from \eqref{eq:star}),
we get
\begin{align*} 
& - \frac{\cD-1}{2}N\log(N) +\sum_{i=1}^{3} \left( M_{i} +G_{i} \right)\log \left| \alpha_{i} \right|
  -2\cD \sum_{i=1}^{3} G_{i} \h \left( \alpha_{i} \right)
  -\frac{\cD-1}{3}(K-1)N\log(b) \\
< & \sum_{i=1}^{3} M_{i} \log \left| \alpha_{i} \right|
    + \rho \sum_{i=1}^{3} G_{i} \left| \log \alpha_{i} \right|
    + \log(N!)+N\log 2 + \frac{N}{3}(K-1) \log b \\
  & -\frac{N^{2}}{2K} \left( 1-\frac{2}{3L}-\frac{2}{3KL} - \frac{1}{3L^{2}}-\frac{16}{3K^{2}L} \right)
    \log \rho + 0.001.
\end{align*}

After combining like terms, we obtain
\begin{align*}
& \frac{N^{2}}{2K} \left( 1-\frac{2}{3L}-\frac{2}{3KL} - \frac{1}{3L^{2}}-\frac{16}{3K^{2}L} \right) \log \rho \\
< & \frac{\cD-1}{2} N\log N + \sum_{i=1}^{3} G_{i} \left( \rho \left| \log \alpha_{i} \right|
    - \log \left| \alpha_{i} \right| + 2\cD \h \left( \alpha_{i} \right) \right)
    + \log(N!) + N\log (2) + \frac{K-1}{3} \cD N\log(b) + 0.001.
\end{align*}

Applying $N!<N(N/e)^{N}$ (which holds for $N \geq 7$), then dividing both sides
by $N/2$, it follows that
\begin{align*}
  & \left( \frac{KL}{2} + \frac{L}{2} - \left( \frac{1}{3}+\frac{1}{6L} \right)K - \frac{2}{3}-\frac{3}{K}-\frac{1}{6L}-\frac{8}{3K^{2}} \right) \log \rho \\
< & (\cD+1)\log N + (2/N)\sum_{i=1}^{3} G_{i} \left( \rho \left| \log \alpha_{i} \right|
    - \log \left| \alpha_{i} \right| + 2\cD \h \left( \alpha_{i} \right) \right)
    + \frac{2\log(N)}{N} - 2\log (e/2) \\
  & + \frac{2(K-1)\cD}{3} \log(b) + 0.002/N.
\end{align*}

For $K \geq 3$ and $L \geq 5$, we have $1/3+1/(6L)=0.366\ldots$ and
$2/3+3/K+1/(6L)+8/\left( 3K^{2} \right)=1.9962\ldots$, we have
\begin{align*}
\left( \frac{KL}{2} + \frac{L}{2} - 0.37K - 2 \right) \log \rho
< & (\cD+1)\log N + (2/N)\sum_{i=1}^{3} G_{i} \left( \rho \left| \log \alpha_{i} \right|
    - \log \left| \alpha_{i} \right| + 2\cD \h \left( \alpha_{i} \right) \right) \\
  & + \frac{2\log(N)}{N} - 2\log (e/2) + \frac{2(K-1)\cD}{3} \log(b) + 0.002/N.
\end{align*}

The proof now follows from $2\log(N)/N-2\log(e/2)+0.002/N<0$ for $N \geq 6$
and the definitions of the $G_{i}$'s in \eqref{eq:g-defn} and applying the contrapositive
to show that the assumption that $\Lambda'<p^{-KL}$ does not hold.
\end{proof}


\subsection{A zero lemma}

To use Proposition~\ref{prop:M2}, we need to find conditions under which our determinant
$\Delta$ is non-zero, a so-called {\it zero lemma}. We use a
zero lemma due to N. Gouillon (see \cite[Th\'{e}or\`{e}me~2.1]{G2}, which is a
refinement of Th\'{e}or\`{e}me~1 of \cite{G1}. In fact, in our formulation below,
we state Gouillon's result not just for $\bbC$, as he does, but for any algebraically
closed field of characteristic zero -- there are no changes required to his proof.
Also Gouillon's result applies to multiplicities. We ignore multiplicities of
the zeroes here.

Let $\bbK$ be an algebraically closed field of characteristic zero and let $d_{0}$
and $d_{1}$ be two non-negative integers which are not both zero.
We denote by $G$ the group $\bbK^{d_{0}} \times \left( \bbK^{\times} \right)^{d_{1}}$ 
The group law on $G$ will be written additively, hence its neutral element
is denoted by $\bzero_{G}$. When $\Sigma_{1},\ldots,\Sigma_{n}$ are finite subsets
of $G$, we define
\[
\Sigma_{1}+\cdots+\Sigma_{n}
= \left\{ \sigma_{1} + \cdots + \sigma_{n}: 
\sigma_{1} \in \Sigma_{1}, \ldots, \sigma_{n} \in \Sigma_{n} \right\}.
\]

\begin{prop}
\label{prop:zero-est}
Suppose that $K$ and $L$ are positive integers and that $\Sigma_{1}$, $\Sigma_{2}$
and $\Sigma_{3}$ are non-empty finite subsets of $\bbK^{2}\times \bbK^{\times}$
such that
\begin{equation}
\label{eq:zero-est-i}
\begin{cases}
\card \left\{ \lambda x_{1} + \mu x_{2} :
      \text{$\exists y \in \bbK^{\times}$ with $\left( x_{1},x_{2},y \right) \in \Sigma_{1}$} \right\} &>K,
      \quad \forall (\lambda, \mu ) \in \bbK^{2} \setminus \{(0,0)\},\\
\card \left\{ y :
      \text{$\exists \left( x_{1}, x_{2} \right) \in \bbK^{2}$ with $\left( x_{1},x_{2},y \right) \in \Sigma_{1}$} \right\} &>L,
\end{cases}
\end{equation}

\begin{equation}
\label{eq:zero-est-ii}
\begin{cases}
\card \left\{ \left( \lambda x_{1} + \mu x_{2},y \right) :
      \left( x_{1},x_{2},y \right) \in \Sigma_{2} \right\} &>2KL, \quad \forall (\lambda, \mu) \in \bbK^{2} \setminus \{(0,0)\}, \\
\card \left\{ \left( x_{1},x_{2} \right) :
      \text{$\exists y \in \bbK^{\times}$ with $\left( x_{1},x_{2},y \right) \in \Sigma_{2}$} \right\} &>K^{2},
\end{cases}
\end{equation}
and
\begin{equation}
\label{eq:zero-est-iii}
\card \left( \Sigma_{3} \right) > 3K^{2}L.
\end{equation}

Then the only polynomial $P \in \bbK \left[ X_{1}, X_{2},Y \right]$ of total
degree at most $K$ in $X_{1}$ and $X_{2}$ and of degree at most $L$ in $Y$ which
is zero on the set $\Sigma_{1}+\Sigma_{2}+\Sigma_{3}$ is the zero polynomial.
\end{prop}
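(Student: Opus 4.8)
The plan is to argue by contradiction: suppose $P\neq 0$. Compactify $\mathbb{G}:=\bbK^{2}\times\bbK^{\times}$ inside $\overline{\mathbb{G}}:=\mathbb{P}^{2}\times\mathbb{P}^{1}$, so that $P$ cuts out an effective divisor $Z$ of class $\kappa h_{1}+\lambda h_{2}$ with $\kappa\le K$, $\lambda\le L$, where $h_{1}$ and $h_{2}$ are the classes of a hyperplane of $\mathbb{P}^{2}$ and a point of $\mathbb{P}^{1}$; by hypothesis $Z\supseteq\Sigma_{1}+\Sigma_{2}+\Sigma_{3}$. I would run an induction whose heart is a case split on the connected stabilizer $H:=\mathrm{Stab}^{0}(Z)$. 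Since $\mathrm{Ext}(\mathbb{G}_{m},\mathbb{G}_{a})=0$ in characteristic zero, $H$ is one of $\{\bzero\}$, a line $V\subseteq\bbK^{2}$, $\bbK^{\times}$, $\bbK^{2}$, $V\times\bbK^{\times}$, or $\mathbb{G}$, and the dimension of the quotient $\mathbb{G}/H$ is $3$, $2$, $1$, or $0$. If $\dim H=3$ then $Z=\mathbb{G}$, impossible for a proper hypersurface containing points, so $P=0$.

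Suppose $\dim H=2$. Then $Z$ is a union of $H$–cosets and descends to a divisor $\bar Z$ on the quotient \emph{curve} $\mathbb{G}/H$, which is $\bbK^{\times}$ (if $H=\bbK^{2}$) or $\bbK$ (if $H=V\times\bbK^{\times}$). In the first case $\bar Z$ is a finite set of at most $\lambda\le L$ points, whereas the $Y$–projection of $\Sigma_{1}$ already has more than $L$ elements by \eqref{eq:zero-est-i}; in the second case $\bar Z$ has at most $\kappa\le K$ points, whereas the image of $\Sigma_{1}$ under the linear form with kernel $V$ has more than $K$ elements, again by \eqref{eq:zero-est-i}. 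Since the image of the whole sumset contains a translate of the image of $\Sigma_{1}$, this is a contradiction. Thus the hypotheses on $\Sigma_{1}$ are exactly what is needed to dispose of the cases where the quotient by the stabilizer is a curve.

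Suppose $\dim H=1$. Passing to the quotient \emph{surface} $\mathbb{G}/H$, which is $\bbK^{2}$ (if $H=\bbK^{\times}$) or $\bbK\times\bbK^{\times}$ (if $H=V$), one gets a curve $\bar Z$ of degree $\le K$ (resp. bidegree $\le(K,L)$) whose connected stabilizer is now trivial, with $\bar\Sigma_{1}+\bar\Sigma_{2}+\bar\Sigma_{3}\subseteq\bar Z$. Here I would run a two–step argument inside the surface: choose two elements $\bar u,\bar u'$ of $\bar\Sigma_{1}+\bar\Sigma_{3}$ (using that $\Sigma_{1}$, hence $\bar\Sigma_{1}$, has large image in every quotient of $\mathbb{G}$, so that such a choice making the argument go through exists); then $\bar Z-\bar u$ and $\bar Z-\bar u'$ share no component, so Bézout in $\mathbb{P}^{2}$ (resp. $\mathbb{P}^{1}\times\mathbb{P}^{1}$) bounds their intersection by $K^{2}$ (resp. $2KL$) points, while that intersection contains the relevant projection of $\bar\Sigma_{2}$, whose cardinality exceeds $K^{2}$ (resp. $2KL$) by \eqref{eq:zero-est-ii} — a contradiction. (In the stand-alone surface statement used by the induction, the subcase where $\bar Z$ is itself a union of cosets of a $1$–dimensional subgroup is handled by one further quotient, reducing to the curve case above.) Thus the hypotheses on $\Sigma_{2}$ dispose of the cases where the quotient is a surface.

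The remaining case $\dim H=0$ is where I expect the main obstacle. Now $Z$ is a surface in the threefold $\mathbb{G}$ with trivial connected stabilizer, no dimension reduction is available, and we must use $\Sigma_{3}$ directly. The intended mechanism: choose \emph{three} translates $Z-u_{1}$, $Z-u_{2}$, $Z-u_{3}$ with $u_{i}\in\Sigma_{1}+\Sigma_{2}$ whose intersection is $0$–dimensional; then $\Sigma_{3}$ lies inside it, and Bézout in $\mathbb{P}^{2}\times\mathbb{P}^{1}$ — three divisors of class $\kappa h_{1}+\lambda h_{2}$ meet in $3\kappa^{2}\lambda\le 3K^{2}L$ points — forces $\card(\Sigma_{3})\le 3K^{2}L$, contradicting \eqref{eq:zero-est-iii}. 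The hard part is the word ``choose'': the three translates could fail to meet properly only if $\Sigma_{1}+\Sigma_{2}$ is trapped inside finitely many cosets of the positive–dimensional stabilizers of the individual components of $Z$, and ruling this out is precisely where the largeness hypotheses \eqref{eq:zero-est-i}–\eqref{eq:zero-est-ii} on $\Sigma_{1}$ and $\Sigma_{2}$ re-enter: they force the images of $\Sigma_{1}$ and $\Sigma_{2}$ in every proper quotient of $\mathbb{G}$ to be large, so $\Sigma_{1}+\Sigma_{2}$ cannot be confined to any such union of cosets. Turning the three cases above into one uniform induction on $\dim\mathbb{G}$, and making this selection of translates precise, is where essentially all the work lies, and it is also the step that forces the exact constants $K$, $2KL$ and $3K^{2}L$.
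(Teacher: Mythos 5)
Your proposal takes a genuinely different route from the paper's. The paper does not prove this zero lemma from scratch at all: it reduces it (via Lemma~\ref{lem:zero-est-gouillon}) to Th\'{e}or\`{e}me~2.1 of Gouillon \cite{G2}, and the proof of the proposition consists solely of translating Gouillon's conditions on quotients by obstruction subgroups $W\times\bbK^{\times}$ and $W\times\{1\}$ into the concrete cardinality conditions \eqref{eq:zero-est-i}--\eqref{eq:zero-est-iii}. What you sketch instead is a from-scratch proof in the style of Masser and Philippon, and in fact it is quite close in spirit to the direct argument Laurent gives in Appendix~A of this very paper (Proposition~\ref{prop:laurent-zero-est}), up to the choice of compactification ($\mathbb{P}^{2}\times\mathbb{P}^{1}$ here versus $(\mathbb{P}^{1})^{3}$ there, which is why you get the constant $3K^{2}L$ rather than his $6K_{1}K_{2}L$). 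Your overall strategy --- compactify, control multidegrees by B\'{e}zout, use the largeness of the $\Sigma_{j}$'s to rule out degeneracies coming from algebraic subgroups --- is the right one.

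That said, the proposal has a genuine structural gap which you yourself flag, and I think it is more serious than you suggest. You case-split at the outset on the connected stabilizer $H=\mathrm{Stab}^{0}(Z)$ of the whole divisor $Z=\{P=0\}$. But the object that actually governs the argument is the stabilizer of an individual irreducible \emph{component} $V$ of the relevant intersection cycle, and these stabilizers can be positive-dimensional even when $\mathrm{Stab}^{0}(Z)$ is trivial (translates of one component can land on another). The case $\dim H=0$ therefore does not isolate a manageable situation; it still contains all the difficulty. In the working proofs (Gouillon's, and Laurent's in the Appendix) this is handled by an \emph{iterative} construction of a regular sequence $s_{1},s_{2},s_{3}$: one first uses translation by $\Sigma_{1}$ to produce $s_{2}$ not vanishing identically on any component $V$ of $(s_{1})$, then translation by $\Sigma_{2}$ to produce $s_{3}$ not vanishing on any component of $(s_{1})\cdot(s_{2})$. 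At each stage, the counting argument compares $\card\bigl(\Sigma_{j}/H_{V}\bigr)$ with the multidegree bounds \eqref{eq:A-1}--\eqref{eq:A-2}, and it is exactly the pair of conditions on $\Sigma_{1}$ (resp.\ $\Sigma_{2}$) that rules out the two types of obstruction subgroup $W\times\bbK^{\times}$ and $W\times\mu$ at the first (resp.\ second) stage. Your attempt to instead choose three translates $Z-u_{1},Z-u_{2},Z-u_{3}$, $u_{i}\in\Sigma_{1}+\Sigma_{2}$, all at once so that they form a complete intersection is not how the proof goes, and it is not clear that the stated largeness of $\Sigma_{1}+\Sigma_{2}$ in quotients suffices to make such a simultaneous choice: the iterative construction carefully separates the roles of $\Sigma_{1}$ and $\Sigma_{2}$, and the hypotheses \eqref{eq:zero-est-i} and \eqref{eq:zero-est-ii} are tuned to that separation. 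If you want a self-contained proof, follow the construction in Appendix~A; if not, the efficient route is the paper's, which simply checks that \eqref{eq:zero-est-i}--\eqref{eq:zero-est-iii} imply Gouillon's hypotheses.
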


The proof of this proposition is based on the following generalisation of a
special case of a result due to Gouillon.

\begin{lem}
\label{lem:zero-est-gouillon}
Let $K$ and $L$ be positive integers, $\bbK$ be an algebraically closed
field of characteristic zero and $\Sigma_{1}, \Sigma_{2}, \Sigma_{3}$ be non-empty
finite subsets of $\bbK^{2} \times \bbK^{\times}$.

Suppose that the following conditions are satisfied.

(1) For $j=1$ and $j=2$ and for all $\bbK$-subspaces, $W$, of $\bbK^{2}$ of
dimension at most $2-j$, we have
\[
\card \left( \frac{\Sigma_{j}+\left( W \times \bbK^{\times} \right)}{W \times \bbK^{\times}} \right)
> K^{j}.
\]

(2) For each of $j=1$, $j=2$ and $j=3$ and for all $\bbK$-subspaces, $W$, of $\bbK^{2}$ of
dimension at most $3-j$, we have
\[
\card \left( \frac{\Sigma_{j}+\left( W \times \{ 1 \} \right)}{W \times \{ 1 \}} \right)
> jK^{j-1}L.
\]

Then the only polynomial $P \in \bbK \left[ X_{1}, X_{2},Y \right]$ of total
degree at most $K$ in $X_{1}$ and $X_{2}$ and of degree at most $L$ in $Y$ which
is zero on the set $\Sigma_{1}+\Sigma_{2}+\Sigma_{3}$ is the zero polynomial.
\end{lem}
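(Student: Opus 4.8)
The plan is to deduce the lemma from Gouillon's zero estimate \cite[Th\'{e}or\`{e}me~2.1]{G2}, applied to the commutative algebraic group $G=\bbK^{2}\times\bbK^{\times}$ (so, in his notation, $d_{0}=2$ and $d_{1}=1$), specialised to vanishing without derivatives --- the derivation/multiplicity parameters in \cite{G2} being taken trivial --- and with his argument read over an arbitrary algebraically closed field of characteristic zero rather than over $\bbC$, as announced in the paragraph preceding Proposition~\ref{prop:zero-est}. So, arguing by contradiction, suppose $P\in\bbK\left[X_{1},X_{2},Y\right]$ is a non-zero polynomial of total degree at most $K$ in $X_{1},X_{2}$ and of degree at most $L$ in $Y$ that vanishes on all of $\Sigma_{1}+\Sigma_{2}+\Sigma_{3}$.

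In this situation Gouillon's theorem furnishes a proper connected algebraic subgroup $G^{*}\subsetneq G$; writing $\ell$ for its codimension in $G$, so $1\leq\ell\leq 3$, it yields an inequality of the shape
\[
\card\left(\frac{\left(\Sigma_{1}+\cdots+\Sigma_{\ell}\right)+G^{*}}{G^{*}}\right)\leq\deg\left(G/G^{*}\right),
\]
possibly in a product variant over $\Sigma_{1},\ldots,\Sigma_{\ell}$ with the same right-hand side, where $\deg\left(G/G^{*}\right)$ denotes the degree of (the closure of) $G/G^{*}$ in the grading where the $\bbK^{2}$-coordinates have weight $K$ and the $\bbK^{\times}$-coordinate has weight $L$. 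In either reading the left-hand side is at least $\card\left(\left(\Sigma_{\ell}+G^{*}\right)/G^{*}\right)$, since for non-empty finite subsets of an abelian group the cardinality of a sumset (or a product of such cardinalities, each being $\geq 1$) is at least that of any one summand. Hence it suffices to show that, for every proper connected algebraic subgroup $G^{*}$ of $G$, hypothesis~(1) or hypothesis~(2) of the lemma forces $\card\left(\left(\Sigma_{\ell}+G^{*}\right)/G^{*}\right)>\deg\left(G/G^{*}\right)$, contradicting the displayed bound and so giving $P=0$.

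Making this contradiction explicit rests on classifying the connected algebraic subgroups of $G=\bbK^{2}\times\bbK^{\times}$. Since in characteristic zero there is no non-zero homomorphism of algebraic groups between $(\bbK,+)$ and $\bbK^{\times}$ in either direction, every such subgroup is either $W\times\{1\}$ or $W\times\bbK^{\times}$ for some $\bbK$-linear subspace $W\subseteq\bbK^{2}$, the proper ones being $W\times\bbK^{\times}$ with $\dim W\leq 1$ and $W\times\{1\}$ with $\dim W\leq 2$. Two cases occur. If $G^{*}=W\times\bbK^{\times}$, then $\ell=2-\dim W\in\{1,2\}$, the quotient $G/G^{*}$ is the additive group $\bbK^{\ell}$ (the $Y$-grading becoming trivial on it), so $\deg\left(G/G^{*}\right)=K^{\ell}$; and hypothesis~(1) with $j=\ell$, legitimate as $\dim W=2-\ell$, says precisely $\card\left(\left(\Sigma_{\ell}+G^{*}\right)/G^{*}\right)>K^{\ell}$. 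If instead $G^{*}=W\times\{1\}$, then $\ell=3-\dim W\in\{1,2,3\}$, the quotient is $\bbK^{\ell-1}\times\bbK^{\times}$, so $\deg\left(G/G^{*}\right)=\binom{\ell}{\ell-1}K^{\ell-1}L=\ell K^{\ell-1}L$; and hypothesis~(2) with $j=\ell$, legitimate as $\dim W=3-\ell$, gives $\card\left(\left(\Sigma_{\ell}+G^{*}\right)/G^{*}\right)>\ell K^{\ell-1}L$. In both cases the required strict inequality holds, and the proof is complete. Deducing Proposition~\ref{prop:zero-est} is then routine: enlarging $W$ can only shrink the quotient of $\Sigma_{j}$ modulo $W\times\{1\}$ or $W\times\bbK^{\times}$, so conditions \eqref{eq:zero-est-i}--\eqref{eq:zero-est-iii}, stated only for the coordinate subspaces of $\bbK^{2}$, already imply hypotheses~(1)--(2) for all $W$.

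The main obstacle is not conceptual but a matter of reading \cite{G2} carefully: one must pin down the exact form of Gouillon's conclusion in the derivative-free case --- that the obstruction subgroup may be taken connected and proper, that the distinguished index set has size equal to the codimension $\ell$ and may be taken to include $\Sigma_{\ell}$ (so that one can pass to that single factor), and, above all, that the height and degree quantities Gouillon attaches to subgroups of $\bbK^{2}\times\bbK^{\times}$ collapse in this low-dimensional setting to the elementary values $K^{\ell}$ and $\ell K^{\ell-1}L$ that appear in hypotheses~(1) and~(2) --- and one must confirm that his geometric argument is unaffected by replacing $\bbC$ with an arbitrary algebraically closed field of characteristic zero. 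The classification of connected subgroups and the two degree computations are elementary, and it is precisely because the numerics then match with no slack that hypotheses~(1) and~(2) are stated in exactly this form.
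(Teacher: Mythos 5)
Your approach is essentially the same as the paper's: both reduce the lemma to a citation of Gouillon's Th\'eor\`eme~2.1, specialised to $m=2$ with the multiplicity parameters set to zero and read over an arbitrary algebraically closed field of characteristic zero in place of $\bbC$. Your extra unpacking --- the classification of connected algebraic subgroups of $\bbK^2\times\bbK^\times$ (using that there are no non-trivial morphisms between $\mathbb{G}_a$ and $\mathbb{G}_m$ in characteristic zero) and the degree computations yielding $K^\ell$ and $\ell K^{\ell-1}L$ from the binomial expansion of the polarisation --- is correct and genuinely more informative than the paper's one-paragraph citation, but you honestly flag that the remaining substance (matching the exact form of Gouillon's conclusion against hypotheses~(1) and (2)) is delegated to \cite{G2}, which is precisely what the paper's proof does as well.
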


\begin{proof}
This is based on Th\'{e}or\`{e}me~2.1 in \cite{G2} in the special case of $m=2$ and
$T_{1}=\cdots=T_{m+1}=0$. The only difference is that he stated and proved his
result only for $\bbC$ in place of our $\bbK$. However, his proof only requires
that the field be algebraically closed and of characteristic $0$, rather than
requiring any additional properties of $\bbC$.

We have taken his $T_{1}=\cdots=T_{m+1}=0$ since we are only concerned
with the zeroes themselves, not their multiplicities. Also we have used the
notation of Waldschmidt \cite{W}, which is itself based on the notation of
Philippon \cite{Phil}, instead of Gouillon's similar, but not identical, notation.
\end{proof}

\begin{proof}[Proof of Proposition~\ref{prop:zero-est}]
We only show that case $j=1$ of Gouillon's condition~(1) follows from the conditions
in our proposition (the first part of condition~\eqref{eq:zero-est-i} of our proposition,
in particular), as the proofs of the others are very similar.

In this case, there exists a $\bbK$-subspace, $W$, of $\bbK^{2}$ of dimension
either $0$ or $1$.

If the dimension of $W$ is $0$, then
\begin{equation}
\label{eq:zero-est-condition}
\card \left( \frac{\Sigma_{1}+\left( W \times \bbK^{\times} \right)}{W \times \bbK^{\times}} \right)
=\card \left\{ \left( x_{1}, x_{2} \right) :
\text{$\exists y \in \bbK^{\times}$ with $\left( x_{1},x_{2},y \right) \in \Sigma_{1}$} \right\}.
\end{equation}

This is because
$\left( x_{1}, x_{2}, y \right) + \left( \{ (0,0) \} \times \bbK^{\times} \right)
=\left( x_{1}, x_{2}, 1 \right) + \left( \{ (0,0) \} \times \bbK^{\times} \right)$ for any
$\left( x_{1}, x_{2}, y \right) \in \Sigma_{1}$ and each coset,
$\left( x_{1}, x_{2}, 1 \right) + \left( \{ (0,0) \} \times \bbK^{\times} \right)$, is distinct.

The first part of our condition~\eqref{eq:zero-est-i} implies that the cardinality in \eqref{eq:zero-est-condition}
exceeds $K$.

If the dimension of $W$ is $1$, then this subspace
is
\[
\left\{ \left( x_{1}, x_{2} \right) \in \bbK^{2}: \lambda x_{1}+\mu x_{2} = 0 \right\}
\]
for some $(\lambda, \mu) \in \bbK^{2} \backslash \{ (0,0) \}$.

For any $(\lambda, \mu) \in \bbK^{2} \backslash \{ (0,0) \}$, there is a bijection
between this set and the set in the first part of condition~\eqref{eq:zero-est-i} of our
proposition (note that all $\left( x_{1}, x_{2}, y \right) \in \Sigma_{1}$ with
$x_{1}$ and $x_{2}$ fixed map to the same element in the set in Gouillon's
condition~(1) with $j=1$). So the first part of condition~\eqref{eq:zero-est-i} of our
proposition ensures that Gouillon's condition~(1) holds for $j=1$.

Continuing in a very similar way, we can show that the conditions in our proposition
imply that Gouillon's conditions hold. Hence our conclusion follows from his result.
\end{proof}

\begin{rem-nonum}
Equation~\eqref{eq:zero-est-condition} illustrates how the sets on the left-hand
sides of \eqref{eq:zero-est-i}--\eqref{eq:zero-est-iii} in Proposition~\ref{prop:zero-est}
arise. They are related to sets of classes of the form
$\left( \Sigma_{i}+H \right)/H$ for various algebraic subgroups, $H$, of $\bbK^{2} \times \bbK^{\times}$.
Such algebraic subgroups, $H$, are the obstruction subgroups introduced to the
study of zero estimates and multiplicity estimates by Philippon \cite{Phil}.

Also note that any algebraic subgroup of the product of an additive group by a
multiplicative group is a product of a subgroup of the additive group and
a subgroup of the multiplicative group.
\end{rem-nonum}

\begin{rem}
\label{rem:sigma-defn}
For $j=1$, $2$, $3$, we shall consider finite sets $\Sigma_{j}$ defined by
\begin{equation}
\label{eq:Sigma-j-defn}
\Sigma_{j} = \left\{ \left( r+t\beta_{1}, s+t\beta_{2}, \alpha_{1}^{r} \alpha_{2}^{s} \alpha_{3}^{t} \right)
    : 0 \leq r \leq R_{j}, 0 \leq s \leq S_{j}, 0 \leq t \leq T_{j} \right\},
\end{equation}
where $R_{j}$, $S_{j}$ and $T_{j}$ are positive integers, $\beta_{1}=b_{1}/b_{3}=b_{1}'/b_{3}'$
and $\beta_{2}=b_{2}/b_{3}=b_{2}''/b_{3}''$ are as in \eqref{eq:bi-rels}.
This choice corresponds to the entries of the arithmetical matrix used in the
definition of $\Delta$ in \eqref{eq:delta-defn}.
\end{rem}

\subsection{Degeneracies}

If the conditions in our zero lemma do not all hold, then there will be a linear
dependence relation over $\bbQ$ that the $b_{i}$'s in our linear form satisfy
(see conditions~\eqref{eq:C1} and ({\rm C2}) in Theorem~\ref{thm:main}).
We refer to such cases as degeneracies and present results in this subsection for
how we handle them.

\begin{rem}
\label{rem:wald}
Note that there is an alternative approach due to Waldschmidt for handling the
degenerate case
(see the discussion at the end of Section~7.1 of \cite[pp. 191--192]{W}). This
alternative approach is more efficient in its dependence on $b$ ($\log^{2}(b)$
rather than $\log^{8/3}(b)$ as in Subsection~\ref{subsect:degen}). This would
considerably simplify our treatment of the degenerate case as well as the statement
of Theorem~\ref{thm:main}. Our attempts to apply it have yielded larger constants,
and hence weaker results.
But Waldschmidt's approach certainly warrants further efforts.
\end{rem}

Concerning the group, $\bbC^{2} \times \bbC^{\times}$, the following elementary lemma is important.

\begin{lem}
\label{lem:condM1}
%
%
The following conditions are equivalent.

\noindent
{\rm (a)} The map
\[
\psi : \bbZ^{3} \to \bbC^{2} \times \bbC^{\times}, \quad (r,s,t)
\mapsto \left( r+\beta_{1}t, s+\beta_{3}t, \alpha_{1}^{r}\alpha_{2}^{s}\alpha_{3}^{t} \right)
\]
is not one-to-one $($not injective$)$.

\noindent
{\rm (b)} There exists some positive integer $m$ such that
\[
\alpha_{3}^{mb_{3}} = \alpha_{1}^{mb_{1}} \alpha_{2}^{mb_{2}}.
\]

\noindent
{\rm (c)} The number $\Lambda=b_{1}\log \alpha_{1}+b_{2}\log \alpha_{2}-b_{3}\log \alpha_{3}$
belongs to the set $i\pi \bbQ$.
\end{lem}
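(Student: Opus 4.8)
The statement asserts the equivalence of three conditions characterizing when the linear form is ``degenerate''. The natural strategy is to prove the cyclic chain $(a)\Rightarrow(b)\Rightarrow(c)\Rightarrow(a)$, unwinding the definitions at each step. First I would record the key observation that if $\psi$ fails to be injective, then since $\psi$ is a group homomorphism its kernel is a nontrivial subgroup of $\bbZ^{3}$, so there is a nonzero $(u_{1},u_{2},u_{3})\in\bbZ^{3}$ with $\psi(u_{1},u_{2},u_{3})=\bzero_{G}$; explicitly $u_{1}+\beta_{1}u_{3}=0$, $u_{2}+\beta_{3}u_{3}=0$ (wait — the lemma writes $\beta_{3}$ here; given \eqref{eq:bi-rels} this should be read as $\beta_{2}$, i.e. the coefficient attached to $\alpha_{2}$), and $\alpha_{1}^{u_{1}}\alpha_{2}^{u_{2}}\alpha_{3}^{u_{3}}=1$. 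The first two equations give $u_{1}b_{3}=-u_{3}b_{1}$ and $u_{2}b_{3}=-u_{3}b_{2}$; raising the multiplicative relation to a suitable integer power to clear $u_{3}$ and $b_{3}$ from the exponents (taking $m$ a positive multiple of $|b_{3}/\gcd(u_{3},b_{3})|$ or similar) yields $\alpha_{3}^{mb_{3}}=\alpha_{1}^{mb_{1}}\alpha_{2}^{mb_{2}}$, which is $(b)$. One should check that $u_{3}\neq0$: if $u_{3}=0$ then $u_{1}=u_{2}=0$ too, contradicting nontriviality.

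For $(b)\Rightarrow(c)$: from $\alpha_{3}^{mb_{3}}=\alpha_{1}^{mb_{1}}\alpha_{2}^{mb_{2}}$, take logarithms of both sides. Since $\alpha^{\beta}$ means $\exp(\beta\log\alpha)$, we get $mb_{3}\log\alpha_{3}=mb_{1}\log\alpha_{1}+mb_{2}\log\alpha_{2}+2\pi i k$ for some $k\in\bbZ$ (the ambiguity in passing from an equality of complex numbers to an equality of their logarithms). Dividing by $m$ gives $b_{1}\log\alpha_{1}+b_{2}\log\alpha_{2}-b_{3}\log\alpha_{3}=-2\pi i k/m\in i\pi\bbQ$, i.e. $\Lambda\in i\pi\bbQ$.

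For $(c)\Rightarrow(a)$: suppose $\Lambda=i\pi\cdot(p/q)$ with $p/q\in\bbQ$. Then $b_{1}\log\alpha_{1}+b_{2}\log\alpha_{2}-b_{3}\log\alpha_{3}=i\pi p/q$, so exponentiating, $\alpha_{1}^{b_{1}}\alpha_{2}^{b_{2}}\alpha_{3}^{-b_{3}}=e^{i\pi p/q}$, a root of unity; raising to the power $2q$ (or to the order $w$ of the relevant root of unity) gives $\alpha_{1}^{2qb_{1}}\alpha_{2}^{2qb_{2}}\alpha_{3}^{-2qb_{3}}=1$. Now I would exhibit an explicit nonzero element of $\ker\psi$: the triple $(r,s,t)=(2qb_{1},\,2qb_{2},\,-2qb_{3})$ need not lie in the kernel because the additive coordinates $r+\beta_{1}t$ and $s+\beta_{2}t$ need not vanish; instead one takes $(r,s,t)=(\beta_{1},\beta_{2},-1)$ scaled to be integral, i.e. $(b_{1}',b_{2}'',-b_{3})$ after clearing denominators using \eqref{eq:bi-rels} — here $b_{1}/b_{3}=b_{1}'/b_{3}'$ and one uses $b_{3}=d_{1}b_{3}'=d_{2}b_{3}''$, so a common choice such as $(b_{1},b_{2},-b_{3})$ times an appropriate factor makes the first two coordinates vanish: indeed $b_{1}+\beta_{1}(-b_{3})=b_{1}-b_{1}=0$ and $b_{2}+\beta_{2}(-b_{3})=0$. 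Then $\psi(b_{1},b_{2},-b_{3})=(0,0,\alpha_{1}^{b_{1}}\alpha_{2}^{b_{2}}\alpha_{3}^{-b_{3}})$, which is $(0,0,\zeta)$ for a root of unity $\zeta$. Replacing the triple by $(wb_{1},wb_{2},-wb_{3})$ where $w$ is the order of $\zeta$ gives a nonzero element mapping to $\bzero_{G}$, so $\psi$ is not injective, establishing $(a)$.

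\textbf{Main obstacle.} The delicate points are all bookkeeping rather than deep: (i) correctly interpreting ``$\beta_{3}$'' in condition~(a) of the lemma consistently with \eqref{eq:bi-rels}, and keeping straight that $\beta_{1},\beta_{2}$ are rationals so one must clear denominators to stay in $\bbZ^{3}$; (ii) in $(a)\Rightarrow(b)$, choosing the multiplier $m$ so that the exponents $mb_{i}$ are integers and the relation genuinely follows — this needs care because $u_{3}$ and $b_{3}$ interact; and (iii) in $(b)\Rightarrow(c)$ and $(c)\Rightarrow(a)$, handling the $2\pi i\bbZ$ ambiguity in taking/exponentiating logarithms and confirming the resulting quantity lands in $i\pi\bbQ$ (not merely $i\bbR$). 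None of these is a genuine difficulty; the proof is elementary and amounts to chasing the definitions of $\psi$, of $\alpha^{\beta}$, and of the relations~\eqref{eq:bi-rels}.
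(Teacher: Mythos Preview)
Your proposal is correct and follows the same cyclic chain $(a)\Rightarrow(b)\Rightarrow(c)\Rightarrow(a)$ as the paper; the paper's $(a)\Rightarrow(b)$ step is carried out more explicitly, using the decomposition $b_{3}=d_{1}d_{2}\widetilde{b_{3}}$ from \eqref{eq:bi-rels} to show that any nonzero kernel element is exactly $(-mb_{1},-mb_{2},mb_{3})$ for some integer $m$, rather than appealing to ``raising to a suitable power''. One small slip in your $(c)\Rightarrow(a)$ paragraph: the triple $(2qb_{1},2qb_{2},-2qb_{3})$ \emph{does} kill the additive coordinates, since $2qb_{1}+\beta_{1}(-2qb_{3})=2qb_{1}-2qb_{1}=0$, so your first instinct there was already correct and the detour was unnecessary.
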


\begin{proof}
Clearly, without loss of generality, we may assume that $\gcd \left( b_{1}, b_{2}, b_{3} \right)=1$.

Recall our notation from \eqref{eq:bi-rels} with $d_{1}=\gcd \left( b_{1}, b_{3} \right)$
and $d_{2}=\gcd \left( b_{2}, b_{3} \right)$.
Since $\gcd \left( b_{1}, b_{2}, b_{3} \right)=1$, we have $\gcd \left( d_{1}, d_{2} \right)=1$.
Thus
\[
b_{3}=d_{1} d_{2} \widetilde{b_{3}} \ \text{(say)}, \quad b_{3}'= d_{2} \widetilde{b_{3}},
\quad b_{3}''=d_{1} \widetilde{b_{3}}.
\]

After these preliminaries, we prove the implication $(a) \Rightarrow (b)$.
Suppose that the map $\psi$ is not injective. Then there exist rational integers
$r$, $s$, $t$, not all zero, such that
\[
\psi(r,s,t)=(0,0,1).
\]

That is,
\[
r + t \beta_{1} = 0, \quad s + t \beta_{2} = 0,\quad \alpha_{1}^{r} \alpha_{2}^{s} \alpha_{3}^{t}=1.
\]

The first relation implies $r=-kb_{1}'$ for some rational integer, $k$. In fact,
we have $k=t/b_{3}'$. Thus $t=kb_{3}'=kd_{2}\widetilde{b_{3}}$. Similarly, from
the second relation we have $s=-\ell b_{2}''$, where $\ell=t/b_{3}''$, so
$t=\ell b_{3}''=\ell d_{1}\widetilde{b_{3}}$, for some rational integer $\ell$.
In particular, $kd_{2}=\ell d_{1}$, hence there exists $m\in \bbZ$ such that
$k=md_{1}$ and $\ell=md_{2}$. Thus
\[
r = - m b_{1}, \quad s = -m b_{2} \quad \text{and} \quad t = m b_{3}.
\]

Since at least one of $r$, $s$ and $t$ is non-zero, it follows that $m \neq 0$.
Thus the third relation gives
\[
\alpha_{3}^{mb_{3}} = \alpha_{1}^{mb_{1}} \alpha_{3}^{m b_{3}},
\]
as wanted.

Clearly, $(b)$ implies $(c)$.

To show that $(c)$ implies $(a)$, we suppose that $(c)$ holds, \ie. that 
$m\Lambda$ belongs to $2i\pi \bbZ$ for some positive rational integer $m$. Then
it is clear that $\psi \left( mb_{1},mb_{2},-mb_{3} \right)=(0,0,1)$, proving
that the map $\psi$ is not injective.
\end{proof}

\begin{lem}
\label{lem:condM2}
If $\alpha_{1}$, $\alpha_{2}$ and $\alpha_{3}$ are non-zero complex numbers such
that (for example) $\alpha_{1}$ and $\alpha_{2}$ are multiplicatively independent 
and $\alpha_{3} \neq 1$ is a root of unity, and if $\log \alpha_{j}$ is any determination
of the logarithm of $\alpha_{j}$ for $j=1$, $2$, $3$, then the numbers $\log \alpha_{1}$,
$\log \alpha_{2}$ and $\log \alpha_{3}$ are linearly independent over the rationals.

Furthermore, if $b_{1}$, $b_{2}$ and $b_{3}$ are rational integers with at least
one of $b_{1}$ and $b_{2}$ non-zero, then the number 
$b_{1}\log \alpha_{1}+b_{2}\log \alpha_{2}+b_{3}\log \alpha_{3}$ does not belong to the set
$i\pi \bbQ$.
\end{lem}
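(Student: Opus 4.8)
The plan is to prove the two assertions of Lemma~\ref{lem:condM2} separately, deriving both from the structure of multiplicatively dependent sets of algebraic numbers together with the classical fact that $\log$ of a nonzero algebraic number is either $0$ or transcendental (Hermite--Lindemann), while $i\pi$ is transcendental as well. First I would record the standing hypotheses: $\alpha_1,\alpha_2$ are multiplicatively independent over $\bbQ$ and $\alpha_3$ is a root of unity $\neq 1$, so $\log\alpha_3 = 2\pi i\, (q/m)$ for some rational $q/m$ with $\gcd(q,m)=1$, $m\geq 2$, and in particular $\log\alpha_3 \neq 0$ is a rational multiple of $i\pi$.

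For the first assertion, suppose $c_1\log\alpha_1 + c_2\log\alpha_2 + c_3\log\alpha_3 = 0$ with $c_1,c_2,c_3 \in \bbZ$ not all zero; clearing denominators we may assume they are integers. Exponentiating gives $\alpha_1^{c_1}\alpha_2^{c_2}\alpha_3^{c_3} = 1$. Since $\alpha_3$ is a root of unity of order $m$ (say), raising to the $m$-th power kills the $\alpha_3$-factor and yields $\alpha_1^{mc_1}\alpha_2^{mc_2} = 1$. By multiplicative independence of $\alpha_1$ and $\alpha_2$ this forces $mc_1 = mc_2 = 0$, hence $c_1 = c_2 = 0$. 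Then the original relation reduces to $c_3\log\alpha_3 = 0$, and since $\log\alpha_3 \neq 0$ we get $c_3 = 0$, contradicting the choice of the $c_i$. This proves $\bbQ$-linear independence of $\log\alpha_1,\log\alpha_2,\log\alpha_3$.

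For the second assertion, suppose toward a contradiction that $b_1\log\alpha_1 + b_2\log\alpha_2 + b_3\log\alpha_3 = i\pi\, (p/n)$ for some $p/n \in \bbQ$, where at least one of $b_1, b_2$ is nonzero. Since $\log\alpha_3$ is itself a rational multiple of $i\pi$, we can absorb the $b_3\log\alpha_3$ term into the right-hand side: there is a rational number $r_0$ with $b_1\log\alpha_1 + b_2\log\alpha_2 = i\pi\, r_0$. Multiplying through by a suitable integer $M$ to clear the denominator of $r_0$ and writing $i\pi M r_0 = \log(\zeta)$ for an appropriate root of unity $\zeta$ (a power of $-1$), exponentiating gives $\alpha_1^{Mb_1}\alpha_2^{Mb_2} = \zeta^{\pm 1}$, a root of unity. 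Raising once more to the order of that root of unity produces $\alpha_1^{Nb_1}\alpha_2^{Nb_2} = 1$ for some nonzero integer $N$, and multiplicative independence of $\alpha_1,\alpha_2$ forces $Nb_1 = Nb_2 = 0$, i.e.\ $b_1 = b_2 = 0$, contradicting the hypothesis.

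The only genuinely delicate point -- and the step I expect to be the main obstacle to state crisply rather than to prove -- is being careful that ``multiplicatively independent'' for $\alpha_1,\alpha_2$ is being used with the correct meaning (no nontrivial integer relation $\alpha_1^a\alpha_2^b = 1$), and that passing from an equation of \emph{logarithms} to an equation of the \emph{numbers} is legitimate: exponentiation is always valid, so this direction is safe, but one must resist the temptation to run the argument the other way. No transcendence input beyond the triviality that $\log\alpha_3 \neq 0$ is actually needed, because every relation in sight can be exponentiated away into a multiplicative relation among $\alpha_1$ and $\alpha_2$ alone. I would also remark that the hypothesis is stated as ``for example'', so the same proof applies verbatim whenever two of the three $\alpha_j$ are multiplicatively independent and the remaining one is a root of unity, after relabeling.
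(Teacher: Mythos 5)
Your proof is correct and follows essentially the same route as the paper: both arguments exponentiate a hypothetical $\bbQ$-linear relation (or $i\pi\bbQ$-membership) into a multiplicative relation, then raise to a suitable power to eliminate the root-of-unity factor, and finally invoke multiplicative independence of $\alpha_1$ and $\alpha_2$. The only small divergence is that the paper deduces the second assertion from the first by rewriting $i\pi$ as a rational multiple of $\log\alpha_3$ and substituting, whereas you reprove it directly; this is cosmetic, since the underlying mechanism (exponentiate, kill the torsion, use independence) is identical, and your closing caveat about only needing to exponentiate in one direction is exactly the right thing to keep in mind.
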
 

\begin{proof}
Suppose that
\[
\Lambda = b_{1}\log \alpha_{1} + b_{2}\log \alpha_{2} - b_{3}\log \alpha_{3}=0
\]
where 
$b_{1}$, $b_{2}$ and $b_{3}$ are rational integers not all equal to zero.
Then $\alpha_{3}^{b_{3}}=\alpha_{1}^{b_{1}} \alpha_{2}^{b_{2}}$.
Assume that $\alpha_{3}^{d}=1$ with $d>1$, then 
$ \alpha_{2}^{db_{2}}=\alpha_{1}^{-db_{1}}$, which implies $b_{1}=b_{2}=0$
since $\alpha_{1}$ and $\alpha_{2}$ are multiplicatively independent.
Since we assumed that $b_{1}$, $b_{2}$ and $b_{3}$ are not all equal to zero,
it follows that $b_{3} \neq 0$ and so $\Lambda = b_{3}\log \alpha_{3}\neq 0$,
since $\alpha_{3} \neq 1$. This contradiction proves the first claim.

Noting that $\log \alpha_{3}=2\pi im/n$ with $n \nmid m$, the second claim
follows from the first one.
\end{proof}

The following very elementary lemma will be useful when investigating
conditions~\eqref{eq:zero-est-i} and \eqref{eq:zero-est-ii} of Proposition~\ref{prop:zero-est}.

\begin{lem}
\label{lem:condI}
Suppose that $b_{1}$, $b_{2}$ and $b_{3}$ are positive rational integers which
are coprime. Let $R$, $S$ and $T$ be positive integers and consider the set
\[
\widetilde{\Sigma} = \left\{ \left( r+tb_{1}/b_{3}, s+tb_{2}/b_{3} \right):
0 \leq r \leq R, \, 0 \leq s \leq S, \, 0 \leq t \leq T \right\}.
\]

Then
\[
\card \widetilde{\Sigma} = (R+1)(S+1)(T+1)
\]
unless
\[
b_{1} \leq R \quad \text{and} \quad b_{2} \leq S \quad \text{and} \quad b_{3} \leq T.
\]
\end{lem}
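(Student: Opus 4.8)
The plan is to prove the contrapositive. Write $\phi$ for the map $(r,s,t)\mapsto (r+tb_{1}/b_{3},\,s+tb_{2}/b_{3})$ restricted to the lattice box $\{0,\dots,R\}\times\{0,\dots,S\}\times\{0,\dots,T\}$. Since $\card\widetilde{\Sigma}$ equals $(R+1)(S+1)(T+1)$ exactly when $\phi$ is injective on this box, it suffices to show: if $\phi$ is not injective, then $b_{1}\le R$, $b_{2}\le S$ and $b_{3}\le T$ all hold. So I would assume $\phi(r,s,t)=\phi(r',s',t')$ for two distinct triples in the box, put $u=r-r'$, $v=s-s'$, $w=t-t'$, and record that
\[
b_{3}u=-b_{1}w,\qquad b_{3}v=-b_{2}w,
\]
with $|u|\le R$, $|v|\le S$, $|w|\le T$ and $(u,v,w)\neq(0,0,0)$.

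The arithmetic heart of the argument is to show $b_{3}\mid w$. Set $d_{1}=\gcd(b_{1},b_{3})$ and $d_{2}=\gcd(b_{2},b_{3})$. Dividing the first relation by $d_{1}$ gives $(b_{3}/d_{1})u=-(b_{1}/d_{1})w$ with $\gcd(b_{3}/d_{1},b_{1}/d_{1})=1$, hence $(b_{3}/d_{1})\mid w$; symmetrically $(b_{3}/d_{2})\mid w$. Because $\gcd(b_{1},b_{2},b_{3})=1$ we have $\gcd(d_{1},d_{2})=1$, so $d_{1}d_{2}\mid b_{3}$; writing $b_{3}=d_{1}d_{2}\widetilde{b_{3}}$ (as in the proof of Lemma~\ref{lem:condM1}), the two divisors $b_{3}/d_{1}=d_{2}\widetilde{b_{3}}$ and $b_{3}/d_{2}=d_{1}\widetilde{b_{3}}$ have least common multiple $d_{1}d_{2}\widetilde{b_{3}}=b_{3}$. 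Therefore $b_{3}\mid w$.

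Finally I would note that $w\neq 0$: if $w=0$ the two displayed relations force $u=v=0$, contradicting distinctness of the triples. Hence $|w|\ge b_{3}$, which together with $|w|\le T$ yields $b_{3}\le T$. Then $b_{3}|u|=b_{1}|w|\ge b_{1}b_{3}$ gives $|u|\ge b_{1}$, so $b_{1}\le |u|\le R$, and likewise $b_{2}\le |v|\le S$, completing the proof. I do not anticipate a genuine obstacle here; the only step needing a little care is the divisibility bookkeeping that combines the two relations through the coprimality $\gcd(d_{1},d_{2})=1$, and everything after that is elementary estimation on the bounded box.
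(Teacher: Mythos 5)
Your proposal is correct and follows essentially the same route as the paper: both arguments reduce to showing that non-injectivity of the map $(r,s,t)\mapsto(r+t\beta_1,s+t\beta_2)$ produces a non-zero integer multiple of $(b_1,b_2,-b_3)$ lying in the difference box, whence the bounds $b_1\le R$, $b_2\le S$, $b_3\le T$. The only difference is stylistic: the paper defers the divisibility bookkeeping to the already-proved implication (a)$\Rightarrow$(b) of Lemma~\ref{lem:condM1}, whereas you reproduce it explicitly (deriving $b_3\mid w$ via $\operatorname{lcm}(b_3/d_1,b_3/d_2)=b_3$) and then extract the three inequalities directly from $b_3|u|=b_1|w|$ etc.\ without naming the integer $m$.
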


\begin{proof}
With the same notation as above, suppose that the map 
\[
\psi : \left\{ (r,s,t) : 0 \leq r \leq R, 0 \leq s \leq S, 0 \leq t \leq T \right\}
\to \widetilde{\Sigma},
\quad (r,s,t) \mapsto \left( r+\beta_{1} t, s+\beta_{2} t \right)
\]
is not injective. Then there exist two different triples of
rational integers $(r,s,t)$ and $(r',s',t')$, with $0 \leq r,\,r' \leq R$,
$0 \leq s,s' \leq S$ and $0 \leq t,t' \leq T$
such that $\psi(r,s,t)= \psi(r',s',t')$. That is,
\[
(r-r') + (t-t') \beta_{1} = 0 \quad \text{ and} \quad
(s-s') + (t-t') \beta_{3} = 0.
\]

As in the proof that (a) implies (b) for Lemma~\ref{lem:condM1}, these two
relations imply that
\[
r-r' = m b_{1}, \quad s-s' = m b_{2}, \quad s-s' = -m b_{3}.
\]

Thus $-R \leq mb_{1} \leq R$, $-S \leq mb_{2} \leq S$ and $-T \leq mb_{3} \leq T$.
Since $m$ is non-zero and the $b_{i}$'s are positive, the conclusion follows.
\end{proof}

The first subcondition of condition~\eqref{eq:zero-est-i} in Proposition~\ref{prop:zero-est}
is the most difficult to handle. For it, we will need the following lemmas, in
particular, Lemma~\ref{lem:M3}. These lemmas also bring some extra information
to Proposition~3.1.1 of \cite{Ben-C} (also see \cite[Ex~6.4, pp.~184--185]{W}).

\begin{lem}
\label{lem:M2}
Let $A$, $B$, $C$, $D$, $X>0$, $Y>0$ and $Z>0$ be rational integers with $\gcd(A,B,C)=1$
and $ABC \neq 0$. Put
\[
\Sigma = \left\{ (x,y,z) \in \bbZ^{3} : 0 \leq x \leq X, 0 \leq y \leq Y, 0 \leq z \leq Z \right\}
\]
and
\[
M = \card \left\{ (x,y,z) \in \Sigma : Ax+By+Cz=D \right\}.
\]

\noindent
{\rm (a)} We have
\[
M \leq \left( 1+\left\lfloor \frac{X}{\alpha} \right\rfloor \right)
       \left( 1+\left\lfloor \frac{Y}{|C|/\alpha} \right\rfloor \right)
    \quad \text{and} \quad
M \leq \left( 1+\left\lfloor \frac{X}{\alpha} \right\rfloor \right)
       \left( 1+\left\lfloor \frac{Z}{|B|/\alpha} \right\rfloor \right),
\]
where
\[
\alpha =\gcd(B,C).
\]

\noindent
{\rm (b)} If we suppose that
\[
M \geq \max\left\{X+Y+1,\, Y+Z+1, \, Z+X+1 \right\}
\]
then
\[
\left| A \right| \leq \frac{(Y+1)(Z+1)}{M - \max\{Y,Z\}}, \quad
\left| B \right| \leq \frac{(X+1)(Z+1)}{M - \max\{X,Z\}} \quad \text{and} \quad
\left| C \right| \leq \frac{(X+1)(Y+1)}{M - \max\{X,Y\}}.
\]
\end{lem}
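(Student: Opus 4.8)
The plan is to count the solutions $(x,y,z)\in\Sigma$ of $Ax+By+Cz=D$ by analysing how the coordinates of two distinct solutions can differ. Since $\gcd(A,B,C)=1$ and $ABC\neq 0$, if $(x,y,z)$ and $(x',y',z')$ both satisfy the equation then $A(x-x')+B(y-y')+C(z-z')=0$. Writing $\alpha=\gcd(B,C)$, $B=\alpha B_0$, $C=\alpha C_0$ with $\gcd(B_0,C_0)=1$, we get $A(x-x')=-\alpha(B_0(y-y')+C_0(z-z'))$, and since $\gcd(A,\alpha)$ divides $A$ and $\gcd(A,B,C)=1$ forces $\gcd(A,\alpha)=1$ (as $\gcd(A,\alpha)\mid\gcd(A,B,C)=1$), it follows that $\alpha\mid (x-x')$. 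Hence for a fixed value of $x\bmod\alpha$, the $x$-coordinates of solutions with that residue lie in an arithmetic progression of common difference $\alpha$, so there are at most $1+\lfloor X/\alpha\rfloor$ possible values of $x$. This will give the first factor in part~(a).

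For part~(a), first I would fix $x$ (one of at most $1+\lfloor X/\alpha\rfloor$ choices, by the above) and count the pairs $(y,z)$ with $By+Cz=D-Ax$, i.e. $\alpha(B_0y+C_0z)=D-Ax$; after dividing through by $\alpha$ this is $B_0y+C_0z=(D-Ax)/\alpha$ with $\gcd(B_0,C_0)=1$. Two solutions $(y,z)$, $(y',z')$ of this satisfy $B_0(y-y')=-C_0(z-z')$, and coprimality gives $C_0\mid(y-y')$, i.e. $|C|/\alpha\mid(y-y')$; so $y$ runs over at most $1+\lfloor Y/(|C|/\alpha)\rfloor$ values and each determines $z$ uniquely. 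Multiplying the two counts gives the first inequality in (a); the second is identical with the roles of $y$ and $z$ (equivalently $B$ and $C$) swapped, using $B_0\mid(z-z')$. By symmetry of the set-up one could also bound $M$ by $(1+\lfloor Y/\beta\rfloor)(1+\lfloor Z/(|A|/\beta)\rfloor)$ etc., but only the two stated inequalities are needed.

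For part~(b), the idea is to combine the crude bound $1+\lfloor X/\alpha\rfloor\le X+1$ with the sharper bound on the other factor. From the first inequality of (a), $M\le(X+1)(1+\lfloor Y/(|C|/\alpha)\rfloor)$; but also, projecting solutions onto their $z$-coordinate, for each fixed $z$ the solutions $(x,y)$ of $Ax+By=D-Cz$ number at most $\min\{X,Y\}+1\le\max\{X,Y\}+1$ — actually the cleaner route is: the number of distinct $z$-values occurring among solutions is at most $Z+1$, and for each such $z$ there are at most $\max\{X,Y\}+1$ solutions, wait, that is not tight enough. Let me instead argue directly: since $|C|/\alpha\mid (y-y')$ for solutions sharing a fixed $x$, the number of $y$-values for fixed $x$ is at most $1+\lfloor Y\alpha/|C|\rfloor\le 1+Y\alpha/|C|$; summing over the at most $1+X/\alpha$ values of $x$ gives $M\le(1+X/\alpha)(1+Y\alpha/|C|)$. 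Expanding and using $\alpha\ge 1$ together with the hypothesis $M\ge\max\{X+Y+1,Y+Z+1,Z+X+1\}$ to absorb the cross terms, one isolates $|C|\le(X+1)(Y+1)/(M-\max\{X,Y\})$; the bounds for $|A|$ and $|B|$ follow by permuting the coordinates (the hypothesis $M\ge\max\{\cdots\}$ is symmetric).

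The main obstacle I expect is part~(b): turning the product-form upper bound of part~(a) into a clean linear upper bound on $|C|$ (and symmetrically $|A|$, $|B|$) requires a careful manipulation of the floor functions and of the inequality $M\ge\max\{X+Y+1,Y+Z+1,Z+X+1\}$ to control the error terms $\lfloor X/\alpha\rfloor$ versus $X/\alpha$ and to guarantee the denominator $M-\max\{X,Y\}$ is positive; getting the constants in the numerator to be exactly $(X+1)(Y+1)$ rather than something weaker is the delicate point. The divisibility arguments in part~(a) are routine once the coprimality reductions are set up correctly.
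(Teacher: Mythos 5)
Your part~(a) is essentially the paper's argument: in both, the constraint $\gcd(A,B,C)=1$ forces $\gcd(A,\alpha)=1$, which confines the $x$-coordinate of solutions to a single residue class modulo $\alpha$ (at most $1+\lfloor X/\alpha\rfloor$ values), and then for fixed $x$ the divisibility $|C|/\alpha \mid (y-y')$ bounds the number of $y$-values. The paper phrases the $x$-count via the congruence $Ax\equiv D\pmod\alpha$ rather than via differences, but the two are equivalent. That part is fine.

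Part~(b), however, has a genuine gap — and indeed you flag it yourself as the ``delicate point'' without actually resolving it. Your sketch writes $M\le(1+X/\alpha)(1+Y\alpha/|C|)$ and then says ``Expanding and using $\alpha\ge 1$\ldots to absorb the cross terms, one isolates $|C|\le(X+1)(Y+1)/(M-\max\{X,Y\})$,'' but this does not constitute an argument because $\alpha=\gcd(B,C)$ is not under your control: it can be anywhere in $[1,|C|]$, and for intermediate $\alpha$ the right-hand side does not directly yield a bound on $|C|$. Two ideas are missing. First, one must observe that $f(\xi)=(1+X/\xi)(1+\xi Y/|C|)$ is convex in $\xi$ (since $f''(\xi)=2X/\xi^3>0$), so $M\le f(\alpha)\le\max\{f(1),f(|C|)\}$; this reduces to the two extreme cases $\alpha=1$ and $\alpha=|C|$, yielding $|C|\le Y(X+1)/(M-(X+1))$ or $|C|\le X(Y+1)/(M-(Y+1))$ respectively. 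Second, converting these into the stated clean form requires a further manipulation: for instance, writing $\frac{Y(X+1)}{M-(X+1)}=\frac{Y(X+1)}{(M-X)(1-1/(M-X))}$, expanding the geometric series, and using $M-X\ge Y+1$ to bound the sum by $(Y+1)/Y$, giving $\frac{Y(X+1)}{M-(X+1)}\le\frac{(X+1)(Y+1)}{M-X}$. Without both the convexity reduction and this series estimate, the claimed inequality $|C|\le(X+1)(Y+1)/(M-\max\{X,Y\})$ does not follow from what you have written.
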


\begin{rem}
\label{rem:cV1}
When we apply part~(b) of this lemma, we will assume that $M$ is (possibly) even
larger. Let $\cV = ((X+1)(Y+1)(Z+1))^{1/2}$ and suppose that $\chi$ is a positive
real number. We will assume that
\[
M \geq \max\left\{X+Y+1,\, Y+Z+1, \, Z+X+1, \chi \cV \right\}.
\]
\end{rem}

\begin{proof}
(a) Define
\[
\Pi=\left\{ (x,y,z) \in \bbC^{3} : Ax+By+Cz=D \right\}.
\]

If the image by the map $(x,y,z)\mapsto Ax+By+Cz$ of a point $(x,y,z)\in \bbZ^{3}$
belongs to the plane $\Pi$, then
\[
A x \equiv D \pmod \alpha,
\]
where $A$ and $\alpha$ are coprime since $\gcd(A,B,C)=1$. This shows that the
number of such $x$ which satisfy $0 \leq x \leq X$ is at most
$1+\left\lfloor X/\alpha \right\rfloor$.

Now let $x$ be fixed, with $0 \leq x \leq X$, and such that the images of two
distinct elements $(x,y,z)$ and $(x,y',z')$ of $\Sigma$ also belong to $\Pi$.
Then
\[
B(y'-y)=C(z-z'),
\]
where we suppose (as we may) that $y$ is minimal (then $y'>y$). Hence there
exists a positive integer $k$ such that
\[
y'-y = k (|C|/\alpha) \quad \text{and} \quad
z-z'= \pm k (|B|/\alpha).
\]

It follows that, for $x$ fixed, the number of $(x,y,z) \in \Sigma$ whose
image belongs to $\Pi$ is at most $1+\lfloor Y/(|C|/\alpha) \rfloor$. Hence
\begin{equation}
\label{eq:M-ub1}
M \leq \left(1+\left\lfloor \frac{X}{\alpha}     \right\rfloor \right)
       \left(1+\left\lfloor \frac{Y}{|C|/\alpha} \right\rfloor \right),
\end{equation}
which proves the first upper bound for $M$ in part~(a) of the lemma.

The proof of the second upper bound for $M$ is the same, except for fixed values
of $x$, we bound the number of possible $z$-coordinates rather than the number
of possible $y$-coordinates.

\vspace*{3.0mm}

(b) We start with the upper bound for $|C|$.

For $\xi \geq 1$, put
\[
f(\xi) = \left( 1+\frac{X}{\xi} \right) \left( 1+\frac{\xi Y}{|C|} \right).
\]

From equation~\eqref{eq:M-ub1}, it follows that
\[
M \leq f(\alpha).
\]

Clearly, $1 \leq \alpha \leq C$. Since $f''(\xi)=2X/\xi^{3}>0$, it follows that
$f(\xi)$ is convex and so
\[
M \leq f(\alpha) \leq \max \left\{ f(1), f(C) \right\}.
\]

If
\[
M \leq f(1)=1 + \frac{XY}{|C|} + X + \frac{Y}{|C|},
\quad \text{ then } \quad
|C| \leq \frac{Y(X+1)}{M -(X+1)}.
\]

If
\[
M \leq f(C)=1 + \frac{XY}{|C|} + \frac{X}{|C|} + Y,
\quad \text{ then } \quad
|C| \leq \frac{X(Y+1)}{M -(Y+1)}.
\]

Suppose finally that
\[
M \geq \max\{X+Y+1,Y+Z+1, Z+X+1\}.
\]

Since $M-X \geq Y+1$, we can write
\begin{align*}
\frac{Y(X+1)}{M-(X+1)}= \frac{XY+Y}{(M-X)(1-1/(M-X))}
& =    \frac{XY+Y}{M-X} \left( 1+\frac{1}{M-X}+\frac{1}{(M-X)^{2}}+\cdots \right) \\
& \leq \frac{XY+Y}{M-X} \left( 1+\frac{1}{Y+1}+\frac{1}{(Y+1)^{2}}+\cdots \right) \\
& =    \frac{XY+Y}{M-X}\frac{Y+1}{Y}=\frac{(X+1)(Y+1)}{M-X}.
\end{align*}

Similarly,
\[
\frac{X(Y+1)}{M-(Y+1)} \leq \frac{(X+1)(Y+1)}{M-Y}.
\]

Thus, we always have
\[
|C| \leq \frac{(X+1)(Y+1)}{M-\max\{X,Y\}}.
\]

The upper bounds for $|A|$ and $|B|$ are proved in the same way.
\end{proof}

\begin{lem}
\label{lem:plan}
Let $B$, $C$, $D$, $X>0$, $Y>0$ and $Z>0$ be rational integers with $\gcd(B,C)=1$
and $BC \neq 0$.

Put
\[
\Sigma = \left\{ (x,y,z) \in \bbZ^{3} : 0 \leq x \leq X, 0 \leq y \leq Y, 0 \leq z \leq Z \right\}
\]
and
\[
M = \card \left\{ (x,y,z) \in \Sigma : By+Cz=D \right\}.
\]

\noindent
{\rm (a)} We have
\[
M \leq (X+1)
\left(1+\left\lfloor \frac{ Y}{|C|} \right\rfloor \right)
\quad \text{and} \quad
M \leq (X+1)
\left(1+\left\lfloor \frac{ Z}{|B|} \right\rfloor \right).
\]

\noindent
{\rm (b)} Moreover, if we suppose that
\[
M \geq \max\{ X+Y+1, X+Z+1\},
\]
then
\[
\left| B \right| \leq \frac{(X+1)(Z+1)}{M-X} \quad \text{and} \quad
\left| C \right| \leq \frac{(X+1)(Y+1)}{M-X}.
\]
\end{lem}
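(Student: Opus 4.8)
The plan is to follow the proof of Lemma~\ref{lem:M2} almost verbatim, simplified by the fact that the defining equation $By+Cz=D$ does not involve the variable $x$ at all; note that Lemma~\ref{lem:M2} cannot be invoked directly, since it corresponds to the case where the coefficient of $x$ vanishes, which its hypothesis $ABC\neq 0$ excludes, so a standalone argument is genuinely needed.

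For part~(a), I would first observe that the condition $By+Cz=D$ places no constraint on $x$, so each solution $(y,z)$ with $0\le y\le Y$ and $0\le z\le Z$ contributes exactly the $X+1$ triples $(0,y,z),\ldots,(X,y,z)$ to the count, giving
\[
M = (X+1)\cdot\card\left\{(y,z)\in\bbZ^{2}: 0\le y\le Y,\ 0\le z\le Z,\ By+Cz=D\right\}.
\]
If there are no solutions, both inequalities are trivial. Otherwise, fixing one solution $(y_{0},z_{0})$ and using $\gcd(B,C)=1$, the set of all integer solutions of $By+Cz=D$ is $\{(y_{0}+kC,\,z_{0}-kB):k\in\bbZ\}$; in particular the admissible values of $y$ lie in a single residue class modulo $|C|$ inside $[0,Y]$, the admissible values of $z$ lie in a single residue class modulo $|B|$ inside $[0,Z]$, and each such $y$ (resp.\ $z$) determines the pair. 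Hence the cardinality above is at most $1+\lfloor Y/|C|\rfloor$ and at most $1+\lfloor Z/|B|\rfloor$, which yields the two bounds of part~(a).

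For part~(b), I would discard the floor functions in part~(a) to get $M\le(X+1)(1+Y/|C|)$ and $M\le(X+1)(1+Z/|B|)$, and then rearrange — the denominators being positive since $M\ge X+Y+1$ and $M\ge X+Z+1$ force $M-X-1\ge 1$ — to obtain
\[
|C|\le\frac{Y(X+1)}{M-X-1}\qquad\text{and}\qquad|B|\le\frac{Z(X+1)}{M-X-1}.
\]
The final step is the same geometric-series estimate as in Lemma~\ref{lem:M2}(b): since $M-X\ge Y+1$,
\[
\frac{Y(X+1)}{M-X-1}
=\frac{Y(X+1)}{M-X}\left(1+\frac{1}{M-X}+\frac{1}{(M-X)^{2}}+\cdots\right)
\le\frac{Y(X+1)}{M-X}\cdot\frac{Y+1}{Y}
=\frac{(X+1)(Y+1)}{M-X},
\]
and symmetrically, using $M-X\ge Z+1$, one gets $|B|\le(X+1)(Z+1)/(M-X)$. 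I do not expect a genuine obstacle: this is a strict simplification of Lemma~\ref{lem:M2}, and the only mildly delicate point — passing from the ``$M-X-1$'' denominators to the cleaner ``$M-X$'' ones — is handled exactly as there.
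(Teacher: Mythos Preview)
Your proposal is correct and follows essentially the same approach as the paper's proof: both factor out the free variable $x$ to get the $(X+1)$ factor, then use $\gcd(B,C)=1$ to conclude that admissible $y$-values (resp.\ $z$-values) lie in a single residue class modulo $|C|$ (resp.\ $|B|$), and part~(b) is handled identically via the geometric-series estimate from Lemma~\ref{lem:M2}. The only cosmetic difference is that the paper argues via differences $(y'-y,z'-z)$ of two solutions while you invoke the standard parametrisation $(y_{0}+kC,\,z_{0}-kB)$ directly; these are equivalent.
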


\begin{rem-nonum}
As with Lemma~\ref{lem:M2}(b) and noted in Remark~\ref{rem:cV1}, when we apply
part~(b) of this lemma, we will assume that $M$ is (possibly) even
larger. Let $\cV = ((X+1)(Y+1)(Z+1))^{1/2}$ and suppose that $\chi$ is a positive
real number. We will assume that
\[
M \geq \max\left\{X+Y+1,\, Y+Z+1, \, Z+X+1, \chi \cV \right\}.
\]
\end{rem-nonum}

\begin{proof}
The proof is similar to that of Lemma~\ref{lem:M2}, but simpler.

(a) Define the plane
\[
\Pi = \left\{ (x,y,z) \in \bbC^{3} : By+Cz=D \right\}
\]
and consider the map $(x,y,z)\mapsto By+Cz$ defined on $\bbC^{3}$.

Let $x$ be fixed with $0 \leq x \leq X$ and such that the images of two
distinct points $(x,y,z)$ and $(x,y',z')$ in $\Sigma$ belong to $\Pi$. Then
\[
B(y'-y)=C(z-z'),
\]
where we suppose (as we may) that $y$ is minimal (then $y'>y$). Hence there
exists a positive integer $k$ such that
\[
y'-y = k|C| \quad \text{and} \quad z-z'= \pm k|B|.
\]

Since $y'-y=C(z-z')/B$ and $\gcd(B,C)=1$, it must be the case that
$B|(z-z')$. This is why $k$ is an integer.

It follows that, for $x$ fixed, the number of $(x,y,z) \in \Sigma$ whose
image belongs to $\Pi$ is at most $1+\lfloor Y/|C| \rfloor$. Hence
\begin{equation}
\label{eq:M-ub2}
M \leq (1+X) \left(1+\left\lfloor \frac{Y}{|C|} \right\rfloor \right),
\end{equation}
which proves the first upper bound for $M$ in the lemma.

The proof of the second upper bound for $M$ is the same, except for fixed values
of $x$, we bound the number of possible $z$-coordinates rather than the number
of possible $y$-coordinates.

\vspace*{3.0mm}

(b) We turn now to the upper bounds for $|B|$ and $|C|$, starting with the
upper bound for $|C|$.

From equation~\eqref{eq:M-ub2}, it follows that
\[
M \leq \left( 1+X \right) \left( 1+\frac{Y}{|C|} \right).
\]

Thus
\[
|C| \leq \frac{Y(1+X)}{M-1-X}.
\]

Suppose now
\[
M \geq \max\{ X+Y+1,X+Z+1 \}.
\]

As we saw in the proof of Lemma~\ref{lem:M2}, $M \geq X+Y+1$ implies that
\[
|C| \leq \frac{Y(1+X)}{M-1-X} \leq \frac{(X+1)(Y+1)}{M-X},
\]
as required.

The remaining upper bound for $|B|$ at the end of the lemma is proved
in the same way.
\end{proof}

\begin{lem}
\label{lem:M3}
Let $R_{1}$, $S_{1}$ and $T_{1}$ be positive integers and consider the set
\[
\widetilde{\Sigma}_{1} = \left\{ \left( x_{1},x_{2} \right)
= \left( r+t\beta_{1}, s+t\beta_{2} \right) :
0 \leq r \leq R_{1}, 0 \leq s \leq S_{1}, 0 \leq t \leq T_{1} \right\},
\]
where $\beta_{1}=b_{1}/b_{3}$ and $\beta_{2}=b_{2}/b_{3}$ with $b_{1}$, $b_{2}$ and $b_{3}$
coprime non-zero rational integers, and assume that
\[
\card \widetilde{\Sigma}_{1} = \left( R_{1}+1 \right) \left( S_{1}+1 \right) \left( T_{1}+1 \right).
\]

Put
\[
\cV = \left( \left( R_{1}+1 \right) \left( S_{1}+1 \right) \left( T_{1}+1 \right) \right)^{1/2}.
\]

For any $(\lambda ,\mu) \in \bbC^{2} \setminus \{(0,0)\}$ and any complex number $c$,
let $M_{c}$ be the number of elements
$\left( x_{1},x_{2} \right) \in \widetilde{\Sigma}_{1}$ such that
$\lambda x_{1} + \mu x_{2} = c$.

\noindent
{\rm (a)}
Let $\chi$ be a positive real number. If
\begin{equation}
\label{eq:*}
M_{c} < \cM := \max \left\{ R_{1}+S_{1}+1, S_{1}+T_{1}+1, R_{1}+T_{1}+1, \chi \cV \right\}
\end{equation}
does not hold, then there exist rational integers $u_{1}$, $u_{2}$ and $u_{3}$, not all zero, such that
\[
u_{1}b_{1}+u_{2}b_{2}+u_{3}b_{3}=0,
\]
with $\gcd \left( u_{1}, u_{2}, u_{3} \right)=1$ and
\[
\left| u_{1} \right|
\leq \frac{(S_{1}+1)( T_{1}+1)}{\cM-\max\{S_{1},T_{1}\} }, \qquad
\left| u_{2} \right|
\leq \frac{(R_{1}+1)( T_{1}+1)}{\cM-\max\{R_{1},T_{1}\} } \quad \text{and} \quad
\left| u_{3} \right|
\leq \frac{(R_{1}+1)( S_{1}+1)}{\cM-\max\{R_{1},S_{1}\}}.
\]

\vspace*{1.0mm}

\noindent
{\rm (b)} If the upper bound \eqref{eq:*} for $M_{c}$ holds then, for all
$(\lambda,\mu) \in \bbC^{2} \setminus \{(0,0)\}$, we have
\[
\card \left\{ \lambda x_{1}+\mu x_{2} : \left( x_{1},x_{2} \right) \in \widetilde{\Sigma}_{1} \right\}
\geq \frac{(R_{1}+1)(S_{1}+1)(T_{1}+1)}
          {\max \left\{ R_{1}+S_{1}+1, S_{1}+T_{1}+1, R_{1}+ T_{1}+1, \chi \cV \right\}}.
\]
\end{lem}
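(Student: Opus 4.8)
I would dispatch part~(b) at once: fixing $(\lambda,\mu)\in\bbC^{2}\setminus\{(0,0)\}$, the map $\widetilde{\Sigma}_{1}\to\bbC$, $(x_{1},x_{2})\mapsto\lambda x_{1}+\mu x_{2}$, has every fibre of size $M_{c}<\cM$ by hypothesis, while $\card\widetilde{\Sigma}_{1}=(R_{1}+1)(S_{1}+1)(T_{1}+1)$ by assumption; summing fibre cardinalities gives $(R_{1}+1)(S_{1}+1)(T_{1}+1)\le\cM\cdot\#\{\text{distinct values}\}$, which is the asserted bound. For part~(a), suppose some $(\lambda,\mu)\ne(0,0)$ and some $c$ have $M_{c}\ge\cM$. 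After scaling $(\lambda,\mu,c)$ by a non-zero complex number I may assume $\lambda,\mu,c$ are real: otherwise the real solution set of $\lambda x_{1}+\mu x_{2}=c$ is at most a point, contradicting $M_{c}\ge\cM\ge R_{1}+S_{1}+1\ge3$. By the injectivity assumption, $M_{c}$ then equals the number of integer points $(r,s,t)$ in the box $[0,R_{1}]\times[0,S_{1}]\times[0,T_{1}]$ with $\lambda r+\mu s+\gamma t=c$, where $\gamma=(\lambda b_{1}+\mu b_{2})/b_{3}\in\bbR$.

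I would then split on whether $\gamma=0$. If $\gamma=0$, then $\lambda b_{1}+\mu b_{2}=0$, so $(\lambda,\mu)$ is proportional to $(b_{2}^{*},-b_{1}^{*})$, where $d=\gcd(b_{1},b_{2})$ and $b_{i}=db_{i}^{*}$; rescaling once more, the condition becomes $b_{2}^{*}r-b_{1}^{*}s=D$ with $D\in\bbZ$ (otherwise $M_{c}=0$). As $t$ is absent, I would apply Lemma~\ref{lem:plan}(b) with $t$ in the role of its variable ``$x$'' (so $X=T_{1}$) and with $b_{2}^{*}$, $-b_{1}^{*}$ in the roles of its coefficients ``$B$'', ``$C$''; the hypothesis $M_{c}\ge\max\{T_{1}+R_{1}+1,T_{1}+S_{1}+1\}$ follows from $M_{c}\ge\cM$. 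Then $(u_{1},u_{2},u_{3})=(b_{2}^{*},-b_{1}^{*},0)$ is primitive, satisfies $u_{1}b_{1}+u_{2}b_{2}+u_{3}b_{3}=d(b_{2}^{*}b_{1}^{*}-b_{1}^{*}b_{2}^{*})=0$, and the conclusion of Lemma~\ref{lem:plan}(b) together with $M_{c}\ge\cM$ delivers the stated bounds for $|u_{1}|$, $|u_{2}|$ (with $|u_{3}|=0$).

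If $\gamma\ne0$, the solution set of $\lambda r+\mu s+\gamma t=c$ is a genuine plane $P\subset\bbR^{3}$ with normal $(\lambda,\mu,\gamma)$. The key point is that any line meets the box in at most $1+\max\{R_{1},S_{1},T_{1}\}$ lattice points, which is strictly less than $\cM$ (compare with $R_{1}+S_{1}+1$, $S_{1}+T_{1}+1$, $R_{1}+T_{1}+1$ and use $R_{1},S_{1},T_{1}\ge1$); hence the $M_{c}\ge\cM$ lattice points of $P$ in the box are not collinear, so three of them are affinely independent and determine $P$. Taking the cross product of two integer difference vectors and dividing by its content produces a primitive integral equation $Ar+Bs+Ct=D$ of $P$, with $(A,B,C)$ proportional to $(\lambda,\mu,\gamma)$; every lattice point of the box on $P$ satisfies it, so $M:=\card\{(r,s,t):Ar+Bs+Ct=D\}=M_{c}\ge\cM$. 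Proportionality of normals gives $Ab_{1}+Bb_{2}-Cb_{3}=0$; moreover $\gamma\ne0$ forces $C\ne0$, while $A=B=0$ is impossible (it would force $\lambda=\mu=0$). If $ABC\ne0$ I would apply Lemma~\ref{lem:M2}(b) with $(X,Y,Z)=(R_{1},S_{1},T_{1})$; if exactly one of $A$, $B$ vanishes I would apply Lemma~\ref{lem:plan}(b) with the absent variable as ``$x$''. In all cases $(u_{1},u_{2},u_{3})=(A,B,-C)$ is primitive and solves $u_{1}b_{1}+u_{2}b_{2}+u_{3}b_{3}=0$, and the bounds coming out of those lemmas, combined with $M\ge\cM$ and trivialities such as $\cM-S_{1}\ge\cM-\max\{R_{1},S_{1}\}$, give exactly the three estimates asserted in part~(a).

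The step I expect to be the main obstacle is the case $\gamma\ne0$: one must pass from a complex linear condition to an honest real plane, exclude the collinear configuration (which is precisely where the detailed form of $\cM$ is needed), extract a primitive integral equation for that plane, and then route the two sub-cases ``$ABC\ne0$'' and ``exactly one of $A,B$ is zero'' to Lemmas~\ref{lem:M2} and \ref{lem:plan} while keeping track of the asymmetric roles of $R_{1}$, $S_{1}$, $T_{1}$ in the target bounds. The normalisation argument and part~(b) are routine by comparison.
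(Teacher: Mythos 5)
Your proof is correct, and in the hard case it takes a route that genuinely differs from the paper's. Part~(b) and the $\gamma=0$ case are, modulo notation, the same as the paper's: the paper also reduces the case $\lambda b_1 + \mu b_2 = 0$ to Lemma~\ref{lem:plan} with the $t$-variable in the role of the ``free'' variable. The real divergence is in the case $\gamma\neq 0$. The paper splits this into three sub-cases ($\mu=0$, $\lambda=0$, and $\lambda\mu(\lambda b_1+\mu b_2)\neq 0$) and, in the last one, first normalises $\mu=1$, then uses the pigeonhole bound $M_c > T_1+1$ to find two lattice points on the plane with the same $t$-coordinate; this forces $\lambda\in\bbQ$, from which it writes down explicit integer coefficients $s_1''b_3$, $s_1''b_1+r_1''b_2$, $r_1''b_3$ and feeds them directly into Lemma~\ref{lem:M2}(b). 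You instead first normalise $(\lambda,\mu,c)$ to be real (justified correctly: otherwise the complex condition cuts out at most a point of $\bbR^2$, giving $M_c\le 1<\cM$), then use the sharper observation that $\cM$ strictly exceeds the maximal number $\max\{R_1,S_1,T_1\}+1$ of box lattice points on any line to conclude non-collinearity, extract a primitive integer normal $(A,B,C)$ of the plane by a cross product, and route the cases $ABC\neq 0$ and $A=0$ (or $B=0$) to Lemma~\ref{lem:M2}(b) and Lemma~\ref{lem:plan}(b) respectively, with the bound matching required since $M\ge\cM$ makes the denominators at least $\cM-\max\{\cdot,\cdot\}$. Your version handles $\lambda=0$, $\mu=0$ and the generic sub-case uniformly, which is cleaner conceptually; the paper's version is more constructive, producing the triple $(u_1,u_2,u_3)$ explicitly in terms of the two coincident lattice points. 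Both yield the same bounds, and the key identity $Ab_1+Bb_2-Cb_3=0$ is verified the same way (proportionality of $(A,B,C)$ to $(\lambda,\mu,(\lambda b_1+\mu b_2)/b_3)$). One small point worth making explicit: you should note that $\gcd(B,C)=1$ in the sub-case you send to Lemma~\ref{lem:plan}(b) follows from $\gcd(A,B,C)=1$ and $A=0$, since that lemma requires coprimality of its two coefficients.
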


\begin{rem-nonum}
The introduction of $\chi \cV$ here turns out to be very helpful to us. In many
cases, $\chi \cV$ is much larger than the other terms in the definition of $\cM$
here. So its use here gives us much smaller upper bounds on the sizes of the
$u_{i}$'s. This gives us better results from the kit.
\end{rem-nonum}

\begin{proof}
(a) Suppose that \eqref{eq:*} does not hold for some triple $(\lambda,\mu,c)$.
Let $c$ be a complex number such that $M_{c}$ is maximal and consider the
associated values of $\lambda$ and $\mu$.
We distinguish the following possibilities for $\mu$ and $\lambda$.

$\bullet$
$\mu=0$: suppose that $\left( x_{1}, x_{2} \right) \in \widetilde{\Sigma}_{1}$
satisfies $\lambda x_{1}+\mu x_{2}=\lambda \left( r+t\beta_{1} \right)=c$.
So $b_{3}r+b_{1}t=cb_{3}/\lambda$ for some integers $0 \leq r \leq R_{1}$ and
$0 \leq t \leq T_{1}$ (since $\mu=0$ here and also $(\mu, \lambda) \neq (0,0)$,
we have $\lambda \neq 0$).

We will now apply Lemma~\ref{lem:plan}. Let $(X,Y,Z)$ there be $\left( S_{1}, R_{1}, T_{1} \right)$
and $(B,C,D)$ there be $\left( b_{3}/d_{1},b_{1}/d_{1},cb_{3}/ \left(\lambda d_{1} \right) \right)$,
where $d_{1}=\gcd \left( b_{1}, b_{3} \right)$.
Taking $r$ and $t$ here as $y$ and $z$, respectively, in the definition of $M$
in Lemma~\ref{lem:plan}, the equation $By+Cz=D$ in the definition of $M$
becomes our $\left( b_{3}/d_{1} \right)r + \left( b_{1}/d_{1} \right)t
=cb_{3}/\left( d_{1} \lambda \right)$.

Using the map $\sigma:\Sigma \rightarrow \widetilde{\Sigma}_{1}$ defined by
$\sigma: (s,r,t) \mapsto \left( r+t\beta_{1}, s+t\beta_{2} \right)$, we show that
the cardinalities of $\Sigma$ and $\widetilde{\Sigma}_{1}$ are equal.
The map is clearly surjective. Suppose that
\[
\sigma \left( s_{1}, r_{1}, t_{1} \right)
=\left( r_{1}+t_{1}\beta_{1}, s_{1}+t_{1}\beta_{2} \right)
=\left( r_{2}+t_{2}\beta_{1}, s_{2}+t_{2}\beta_{2} \right)
=\sigma \left( s_{2}, r_{2}, t_{2} \right).
\]
Then $\left( r_{1}-r_{2} \right) + \left( t_{1}-t_{2} \right) \beta_{2}
=\left( s_{1}-s_{2} \right) + \left( t_{1}-t_{2} \right) \beta_{2}=0$, so
$r_{1}-r_{2}=s_{1}-s_{2}$. In this case, we can write $r_{1}=r_{2}+k$ and 
$s_{1}=s_{2}+k$. Thus
$\left( r_{2}+k+t_{1}\beta_{1}, s_{2}+k+t_{1}\beta_{2} \right)
=\left( r_{2}+t_{2}\beta_{1}, s_{2}+t_{2}\beta_{2} \right)$, which can only
happen if $k=0$. This proves that $\sigma$ is injective too. Hence the cardinalities
of $\Sigma$ and $\widetilde{\Sigma}_{1}$ are equal

Therefore, since \eqref{eq:*} does not hold, the inequality for $M$ in
Lemma~\ref{lem:plan}(b) holds and we have
\[
\left| b_{3}/d_{1} \right| =|B| \leq \frac{\left( S_{1}+1 \right) \left( T_{1}+1 \right)}{M_{c}-S_{1}}
\leq \frac{\left( S_{1}+1 \right) \left( T_{1}+1 \right)}{\cM-S_{1}}
\]
and
\[
\left| b_{1}/d_{1} \right|=|C| \leq \frac{\left( S_{1}+1 \right) \left( R_{1}+1 \right)}{M_{c}-S_{1}}
\leq \frac{\left( S_{1}+1 \right) \left( R_{1}+1 \right)}{\cM-S_{1}}.
\]

We now use this information to obtain the linear relation we want between the
$b_{i}$'s. We have the trivial relationship $\left( b_{1}/d_{1} \right)b_{3}
-b_{1} \left( b_{3}/d_{1} \right)=0$, so we can let $u_{1}=-b_{3}/d_{1}$,
$u_{2}=0$ and $u_{3}=b_{1}/d_{1}$. The upper bounds above on
$\left| b_{3}/d_{1} \right|$ and $\left| b_{1}/d_{1} \right|$ establish our lemma
in this case.

\vspace*{1.0mm}

Now we assume $\mu \neq 0$ and, to simplify the notation, we take $\mu=1$.

$\bullet$ $\lambda=0$: by the same argument as for $\mu=0$, we have $b_{3} \left( \lambda x_{1}+\mu x_{2} \right)
=b_{3} \mu x_{2}=b_{3} \left( s+t\beta_{2} \right)
=b_{3}s+tb_{2}=b_{3}c$ for some $\left( x_{1}, x_{2} \right) \in \widetilde{\Sigma}_{1}$.
Here we apply Lemma~\ref{lem:plan} with
$\left( R_{1},S_{1},T_{1} \right)$ for $(X,Y,Z)$ and
$\left( b_{3}/d_{1},b_{2}/d_{1},b_{3}c/d_{1} \right)$ for $(B,C,D)$, where
$d_{1}=\gcd \left( b_{2},b_{3} \right)$. As in the case of $\mu=0$, Lemma~\ref{lem:plan}(b)
gives us
\[
\left| b_{3}/d_{1} \right|
=|B| \leq \frac{\left( R_{1}+1 \right) \left( T_{1}+1 \right)}{\cM-R_{1}}
\quad \text{and} \quad
\left| b_{2}/d_{1} \right|
=|C| \leq \frac{\left( R_{1}+1 \right) \left( S_{1}+1 \right)}{\cM-S_{1}}.
\]

As in the case of $\mu=0$, we have the relationship $u_{1}b_{1}+u_{2}b_{2}+u_{3}b_{3}=0$
with $u_{1}=0$, $u_{2}=b_{3}/d_{1}$ and $u_{3}=-b_{2}/d_{1}$.

\vspace*{1.0mm}

It remains to consider $\mu\lambda \neq 0$. We do so with two cases.

$\bullet$ $\lambda b_{1}+b_{2}=0$: we proceed in the same way as in the case of
$\lambda=0$. We have
$\lambda x_{1}+\mu x_{2}=-b_{2}/b_{1} \left( r+t\beta_{1} \right)+ s+t\beta_{2}=c$
(recalling that we take $\mu=1$).
Expanding this and simplifying it, we obtain $-b_{2}r+b_{1}s=cb_{1}$, so we use
Lemma~\ref{lem:plan} with $\left( 0, -b_{2}/d_{2},b_{1}/d_{2},cb_{1}/d_{2} \right)$
for $(A,B,C,D)$, $(t,r,s)$ for $(x,y,z)$ and $\left( T_{1}, R_{1}, S_{1} \right)$
for $\left( X,Y,Z \right)$, where $d_{2}=\gcd \left( b_{1},b_{2} \right)$. Here
\[
\left| b_{2}/d_{2} \right|
=|B| \leq \frac{\left( S_{1}+1 \right) \left( T_{1}+1 \right)}{\cM-T_{1}}
\quad \text{and} \quad
\left| b_{1}/d_{2} \right|
=|C| \leq \frac{\left( T_{1}+1 \right) \left( R_{1}+1 \right)}{\cM-T_{1}}.
\]

Notice the denominators here differ from those for the case of $\lambda=0$. This
explains why we need the $\max$ in our upper bounds in the lemma.

The desired relationship, $u_{1}b_{1}+u_{2}b_{2}+u_{3}b_{3}=0$, holds if we take
$u_{1}=b_{2}/d_{2}$, $u_{2}=-b_{1}/d_{2}$ and $u_{3}=0$.

\vspace*{1.0mm}

$\bullet$ $\lambda \mu \left( \lambda b_{1}+b_{2} \right) \neq 0$: we will show
that the desired relationship between the $b_{i}$'s holds here too. To proceed, we put
\[
E_{1} = \left\{ (r,s,t) \in \bbZ^{3} : 0 \leq r \leq R_{1}, 0 \leq s \leq S_{1}, 0 \leq t \leq T_{1} \right\}.
\]

Since $M_{c}>T_{1}+1$ (by our assumption that \eqref{eq:*} does not hold),
there exist two distinct triples $\left( r_{1},s_{1},t_{0} \right)$ and
$\left( r_{1}',s_{1}',t_{0} \right) \in E_{1}$ such that
\[
\lambda \left( r_{1}+\beta_{1}t_{0} \right) + \left( s_{1}+\beta_{2}t_{0} \right)
= \lambda \left( r_{1}'+\beta_{1}t_{0} \right) + \left( s_{1}'+\beta_{2}t_{0} \right),
\]
recalling our assumption (stated just before considering the case $\lambda=0$)
that $\mu=1$. This gives us a trivial linear relation between the $b_{i}$'s,
but it does tell us that $\lambda \left( r_{1}'-r_{1} \right)=s_{1}-s_{1}'$.
Since $\lambda \neq 0$ and at least one of $r_{1} \neq r_{1}'$ or $s_{1} \neq s_{1}'$
holds, it follows that both $r_{1} \neq r_{1}'$ and $s_{1} \neq s_{1}'$ hold.
Put 
$r_{1}''=\left( r_{1}'-r_{1} \right) / \gcd \left( r_{1}-r_{1}',s_{1}-s_{1}' \right)$
and $s_{1}''=\left( s_{1}-s_{1}' \right) / \gcd \left( r_{1}-r_{1}',s_{1}-s_{1}' \right)$,
then $\lambda = s_{1}''/r_{1}''$.

We now use this information about $\lambda$ to obtain a non-trivial linear
relation between the $b_{i}$'s whose coefficients we can bound.

We have
$\lambda x_{1}+\mu x_{2}=s_{1}''/r_{1}'' \left( r+t\beta_{1} \right)+ s+t\beta_{2}=c$
(recalling that we take $\mu=1$).
Expanding this and simplifying it, we obtain
\[
s_{1}''b_{3}r + \left( s_{1}''b_{1}+r_{1}''b_{2} \right)t + r_{1}''b_{3}s=r_{1}''b_{3}c,
\]
so we use
Lemma~\ref{lem:M2}(b) with $\left( s_{1}''b_{3}/\delta_{1}, \left( s_{1}''b_{1}+r_{1}''b_{2} \right)/\delta_{1}, r_{1}''b_{3}/\delta_{1}, r_{1}''b_{3}c/\delta_{1}\right)$
for $(A,B,C,D)$, $(r,t,s)$ for $(x,y,z)$ and $\left( R_{1}, T_{1}, S_{1} \right)$
for $\left( X,Y,Z \right)$, where 
\[
\delta_{1}=\gcd \left( s_{1}''b_{3}, s_{1}''b_{1}+r_{1}''b_{2}, r_{1}''b_{3} \right)=\gcd \left(b_{3}, s_{1}''b_{1}+r_{1}''b_{2} \right)
\]
since $r_{1}''$ and $s_{1}''$ are coprime.
Here
\[
\left| s_{1}''b_{3}/\delta_{1} \right| = \left| A \right| \leq \frac{\left( Y+1 \right) \left( Z+1 \right)}{\cM-\max \{ Y,Z \}}
\leq \frac{\left( S_{1}+1 \right) \left( T_{1}+1 \right)}{\cM-\max \{ S_{1}, T_{1} \}},
\]
\[
\left| \left( s_{1}''b_{1}+r_{1}''b_{2} \right)/\delta_{1} \right|
= \left| B \right| \leq \frac{\left( X+1 \right) \left( Z+1 \right)}{\cM-\max \{ X,Z \}}
\leq \frac{\left( R_{1}+1 \right) \left( S_{1}+1 \right)}{\cM-\max \{ R_{1}, S_{1} \}}
\]
and
\[
\left| r_{1}''b_{3}/\delta_{1} \right| = \left| C \right| \leq \frac{\left( X+1 \right) \left( Y+1 \right)}{\cM-\max \{ X,Y \}}
\leq \frac{\left( R_{1}+1 \right) \left( T_{1}+1 \right)}{\cM-\max \{ R_{1}, T_{1} \}}.
\]
Since $\delta_{1}$ divides $ s_{1}''b_{1}+r_{1}''b_{2}$, we have $ s_{1}''b_{1}+r_{1}''b_{2}=k_{1}\delta_{1}$.
Multiplying this by $b_{3}/\delta_{1}$, we get a linear relation
\[
u_{1}b_{1} + u_{2}b_{2} + u_{3}b_{3}=0
\]
with $u_{1}= s_{1}''b_{3}/\delta_{1}$, $u_{2}=r_{1}''b_{3}/\delta_{1}$
and $u_{3}=-\left( s_{1}''b_{1}+r_{1}''b_{2} \right)/\delta_{1}$. Thus
\begin{align*}
\left| u_{1} \right| &= \left| s_{1}''b_{3}/\delta_{1} \right|
\leq \frac{\left( S_{1}+1 \right) \left( T_{1}+1 \right)}{\cM-\max \{ S_{1}, T_{1} \}}, \\
\left| u_{2} \right| &= \left| r_{1}''b_{3}/\delta_{1} \right|
\leq \frac{\left( R_{1}+1 \right) \left( T_{1}+1 \right)}{\cM-\max \{ R_{1}, T_{1} \}}
\quad \text{and} \\
\left| u_{3} \right| &= \left| \left( s_{1}''b_{1}+r_{1}''b_{2} \right)/\delta_{1} \right|
\leq \frac{\left( R_{1}+1 \right) \left( S_{1}+1 \right)}{\cM-\max \{ R_{1}, S_{1} \}}.
\end{align*}

\vspace*{1.0mm}

(b) For $(\lambda,\mu) \in \bbC^{2}\setminus \{(0,0)\}$, we consider the cardinality
\[
N=\card \left\{ \lambda x_{1}+\mu x_{2} \, :\, \left( x_{1},x_{2} \right) \in \widetilde{\Sigma}_{1} \right\}.
\]

Putting $M=\max_{c \in \bbC} M_{c}$, 
we clearly have $N \geq \card \left( \widetilde{\Sigma}_{1} \right) /M$,
so part~(b) of the lemma follows from the assumption in the lemma that
\[
\card \widetilde{\Sigma}_{1} = \left( R_{1}+1 \right) \left( S_{1}+1 \right) \left( T_{1}+1 \right)
\]
and the assumption in part~(b) that $M \leq \cM$.
\end{proof}

\section{Proof of Main Result}
\label{sect:proof}

We start by showing that we can apply our zero lemma, Proposition~\ref{prop:zero-est},
to $\Delta$, so that we have $\Delta \neq 0$. This will allow us to use
Proposition~\ref{prop:M2} to obtain a lower bound for $\left| \Delta \right|$.

If the $N=K(K+1)L/2$ rows of the matrix used to define the interpolation determinant,
$\Delta$, in \eqref{eq:delta-defn} are linearly dependent, then there exists a
polynomial, $P \left( X_{1}, X_{2}, Y \right)$, not exactly zero, with
$P \left( r+t\beta_{1}, s+t\beta_{2}, \alpha_{1}^{r} \alpha_{2}^{s} \alpha_{3}^{t} \right)=0$
for all triples $(r,s,t)$ with $0 \leq r <R$, $0 \leq s < S$ and $0 \leq t<T$.
Since this polynomial arises from a linear combination of the rows, the maximum
exponent of $r+t\beta_{1}$ plus the maximum exponent of $s+t\beta_{2}$ is at most
$K-1$ and the maximum exponent of $\alpha_{1}^{r} \alpha_{2}^{s} \alpha_{3}^{t}$
is at most $L-1$, $\deg_{\uX}(P) \leq K-1$ and $\deg_{Y}(P) \leq L-1$.

Using the definition of the $\Sigma_{j}$'s in \eqref{eq:Sigma-j-defn}, along with
the lower bounds for $R$, $S$ and $T$ in \eqref{eq:RST-lb}, we use that the set
of all such triples $(r,s,t)$ contains $\Sigma_{1}+\Sigma_{2}+\Sigma_{3}$. Therefore,
if conditions~\eqref{eq:zero-est-i}, \eqref{eq:zero-est-ii} and \eqref{eq:zero-est-iii} in Proposition~\ref{prop:zero-est}
hold, then we find that $P \left( X_{1}, X_{2}, Y \right)$ is the zero polynomial.
This contradiction shows that the $N=K(K+1)L/2$ rows of the matrix used to define
the interpolation determinant, $\Delta$, in \eqref{eq:delta-defn} are not linearly
dependent and hence the interpolation determinant, $\Delta$, is not zero.

Thus, if we can show that conditions~\eqref{eq:thm41-i}--\eqref{eq:thm41-v} in
the theorem imply conditions~\eqref{eq:zero-est-i}, \eqref{eq:zero-est-ii} and
\eqref{eq:zero-est-iii} in Proposition~\ref{prop:zero-est}
(unless conditions~\eqref{eq:C1} or \eqref{eq:C2} hold),
then by Proposition~\ref{prop:M3}, the lower bound for $\Lambda'$ in the theorem
will hold (again, unless conditions~\eqref{eq:C1} or \eqref{eq:C2} hold).

\vspace*{3.0mm}

Condition~\eqref{eq:zero-est-i} of Proposition~\ref{prop:zero-est} has two subconditions.
The first subcondition is
\begin{equation}
\label{eq:cond-i.1}
\card \left\{ \lambda x_{1} + \mu x_{2}\, : \,
    \left( x_{1},x_{2},y \right) \in \Sigma_{1} \right\} >K, \quad \forall (\lambda, \mu )\neq(0,0).
\end{equation}

Recalling the definition of $\Sigma_{1}$ in \eqref{eq:Sigma-j-defn} and of
$\widetilde{\Sigma_{1}}$ in Lemma~\ref{lem:condI}, we have
\[
\card \left\{ \lambda x_{1} + \mu x_{2}\, : \,
    \left( x_{1},x_{2},y \right) \in \Sigma_{1} \right\}
= \card \left\{ \lambda x_{1} + \mu x_{2}\, : \,
    \left( x_{1},x_{2} \right) \in \widetilde{\Sigma}_{1} \right\}.
\]

By Lemma~\ref{lem:condI}, we find that
$\card \widetilde{\Sigma}_{1} = \left( R_{1}+1 \right) \left( S_{1}+1 \right) \left( T_{1}+1 \right)$
holds unless condition~\eqref{eq:C1} holds. So we may now assume that
$\card \widetilde{\Sigma}_{1} = \left( R_{1}+1 \right) \left( S_{1}+1 \right) \left( T_{1}+1 \right)$
holds. Thus, by Lemma~\ref{lem:M3}(b), condition~\eqref{eq:thm41-i} of the theorem implies that
\[
\card \left\{ \lambda x_{1} + \mu x_{2}\, : \,
    \left( x_{1},x_{2} \right) \in \widetilde{\Sigma}_{1} \right\} >K, \quad \forall (\lambda, \mu )\neq(0,0).
\]
holds, unless the condition in Lemma~\ref{lem:M3}(a) holds. This condition in
Lemma~\ref{lem:M3}(a) gives rise to condition~\eqref{eq:C2}.

The second subcondition of condition~\eqref{eq:zero-est-i} of Proposition~\ref{prop:zero-est} is
\begin{equation}
\label{eq:cond-i.2}
\card \left\{ y\, : \, \left( x_{1},x_{2},y \right) \in \Sigma_{1} \right\}>L.
\end{equation}

Condition~\eqref{eq:thm41-ii} in this theorem implies that this subcondition holds.

So we have shown that condition~\eqref{eq:zero-est-i} of Proposition~\ref{prop:zero-est}
follows from conditions~\eqref{eq:thm41-i} and \eqref{eq:thm41-ii} in this theorem,
provided that conditions~\eqref{eq:C1} and \eqref{eq:C2} do not hold.

\vspace*{3.0mm}

We now consider condition~\eqref{eq:zero-est-ii} of Proposition~\ref{prop:zero-est}.

It is also divided into two
subconditions. We replace the first one by the stronger condition
\begin{equation}
\label{eq:cond-ii.1}
\card \left\{ y : \left( x_{1},x_{2},y \right) \in \Sigma_{2}\right\} > 2KL.
\end{equation}

Condition~\eqref{eq:thm41-iii} in this theorem implies that this subcondition holds.

The second subcondition of condition~\eqref{eq:zero-est-ii} of Proposition~\ref{prop:zero-est} is
\begin{equation}
\label{eq:cond-ii.2}
\card \left\{ \left( x_{1},x_{2} \right) : 
    \left( x_{1},x_{2},y \right) \in \Sigma_{2} \right\} > K^{2}.
\end{equation}

By Lemma~\ref{lem:condI}, $\card \left\{ \left( x_{1},x_{2} \right) \, : \, \left( x_{1},x_{2},y \right) \in \Sigma_{2} \right\}
=\left( R_{2}+1 \right) \left( S_{2}+1 \right) \left( T_{2}+1 \right)$ holds
unless condition~\eqref{eq:C1} holds.
So condition~\eqref{eq:thm41-iv} in this theorem implies that this subcondition
holds unless condition~\eqref{eq:C1} holds.

\vspace*{3.0mm}

Condition~\eqref{eq:zero-est-iii} of Proposition~\ref{prop:zero-est} is that
$\card \Sigma_{3} > 3K^{2}L$. From the definition of $w$, if $\Lambda \in i\pi \bbQ$,
then $\Lambda=i\pi 2p/q$ where $p \neq 0$ and $0<|q| \leq w$. So, from the
assumption in this theorem that $0 < \left| \Lambda \right| < 2\pi/w$, it follows
that $\Lambda \not\in i\pi \bbQ$. Thus the map in Lemma~\ref{lem:condM1}(a) is
injective, so hypothesis~\eqref{eq:thm41-v} of the
theorem implies condition~\eqref{eq:zero-est-iii} of Proposition~\ref{prop:zero-est}
holds. This finishes the proof. 


\section{How to use Theorem~\ref{thm:main}}
\label{sect:how-to}

We will first consider the multiplicative group generated by the three algebraic
numbers $\alpha_{1}$, $\alpha_{2}$ and $\alpha_{3}$, which we will denote by $\cG$.

\subsection{About the multiplicative group $\cG$}

In practical examples, generally the following condition holds:
\begin{equation}
\label{hypo:M}
\begin{cases}
\text{either $\alpha_{1}$, $\alpha_{2}$ and $\alpha_{3}$ are multiplicatively independent, or} \\
\text{two of them are multiplicatively independent and the third is a root of unity $\neq 1$.}
\end{cases}
\end{equation}

We now use hypothesis~\eqref{hypo:M}, which is clearly stronger than the standard
hypothesis that the multiplicative group $\cG$ is of rank at least two. We also
notice that the order in $\bbC^{\times}$ of a root of unity ${} \neq 1$ is at
least equal to $2$, thus the condition~\eqref{eq:cond-i.2} is satisfied if
\begin{equation}
\label{eq:C.i.2}
\frac{2(R_{1}+1)(S_{1}+1)(T_{1}+1)}{W_{1}+1} >L,
\end{equation}
where
\[
W_{1}=\begin{cases}
       R_{1}, & \text{if $\alpha_{1}$ is a root of unity}, \\
       S_{1}, & \text{if $\alpha_{2}$ is a root of unity}, \\
       T_{1}, & \text{if $\alpha_{3}$ is a root of unity}, \\
       1,     & \text{otherwise},
      \end{cases}
\]
and recalling the definition of the $\Sigma_{j}$'s in \eqref{eq:Sigma-j-defn}.
But see also the first remark after \eqref{eq:C.ii.1} below.


In the same way, we see that to satisfy the condition~\eqref{eq:cond-ii.1} it is
enough to suppose that (when condition~\eqref{hypo:M} holds)
\begin{equation}
\label{eq:C.ii.1}
\frac{ \left( R_{2}+1 \right) \left( S_{2}+1 \right) \left( T_{2}+1 \right)}{W_{2}+1} >KL,
\end{equation}
where $W_{2}$ is defined by
\[
W_{2}=\begin{cases}
R_{2}, & \text{if $\alpha_{1}$ is a root of unity,} \\
S_{2}, & \text{if $\alpha_{2}$ is a root of unity,} \\
T_{2}, & \text{if $\alpha_{3}$ is a root of unity,} \\
1, & \text{otherwise}.
\end{cases}
\]

\begin{rem-nonum}
When (for example) $\alpha_{3}$ is a root of unity of order $\nu$,
condition~\eqref{eq:C.i.2} above can be replaced by
\[
\nu \left( R_{1}+1 \right) \left( S_{1}+1 \right) >L 
\]
(provided $T_{1} \geq \nu -1$)
and condition~\eqref{eq:C.ii.1}
can be replaced by
\[
\nu \left( R_{2}+1 \right) \left( S_{2}+1 \right)>KL
\]
(provided $T_{2} \geq \nu -1$).
\end{rem-nonum}

\begin{rem-nonum}
Under a weaker condition, one can obtain similar (but slightly weaker) conclusions
(see, for instance, \cite[Ex.~7.5, p.~229]{W}). 
\end{rem-nonum}

\subsection{The choice of parameters}
\label{subsec:param-choice}

Here we assume that condition~\eqref{hypo:M} holds, then by Lemma~\ref{lem:condM2}
above we know that $\Lambda \not\in i\pi \bbQ$.

To apply Theorem~\ref{thm:main}, we consider an integer $L \geq 5$ and real
parameters $m>0$, $\rho \geq 2$ and $\chi>0$. Note that having chosen $\rho$,
we can set the values of the $a_{i}$'s too.

Now we put
\begin{equation}
\label{eq:k-value}
K = \lfloor m L a_{1} a_{2} a_{3}\rfloor.
\end{equation}

The reason for this choice of $K$ is as follows. The main term on the left-hand
side of equation~\eqref{eq:o} is $KL\log(\rho)/2$, so it must be larger than
$\cD(K-1)\log(b)$. This suggests that we let $L=O \left( \cD\log(b)/\log(\rho) \right)$.
Thus our lower bound for $\log \left| \Lambda \right|$, which is $-\log(\rho)KL$,
is $O \left( a_{1}a_{2}a_{3}\cD^{2} \log^{2}(b)/\log(\rho) \right)$. This is our
desired form and consistent with the bounds for linear forms in two logs that we
obtain from this same technique (see, for example, \cite{LMN, L5}).

We will also assume that
\[
m \geq 1 \quad \text{and} \quad \Omega := a_{1} a_{2} a_{3} \geq 2.
\]

We define
\begin{align}
\label{eq:RST-values}
R_{1}    &= \lfloor c_{1} a_{2} a_{3} \rfloor,
& S_{1}  &= \lfloor c_{1} a_{1} a_{3} \rfloor,
& T_{1}  &= \lfloor c_{1} a_{1} a_{2} \rfloor, \nonumber \\
R_{2}    &= \lfloor c_{2} a_{2} a_{3} \rfloor,
& S_{2}  &= \lfloor c_{2} a_{1} a_{3} \rfloor,
& T_{2}  &= \lfloor c_{2} a_{1} a_{2} \rfloor, \\
R_{3}    &= \lfloor c_{3} a_{2} a_{3} \rfloor,
& S_{3}  &= \lfloor c_{3} a_{1} a_{3} \rfloor,
& T_{3}  &= \lfloor c_{3} a_{1} a_{2} \rfloor, \nonumber
\end{align}
where the parameters $c_{1}$, $c_{2}$ and $c_{3}$ will be chosen so that conditions
\eqref{eq:thm41-i} through \eqref{eq:thm41-v} of Theorem~\ref{thm:main} are satisfied.
The motivation for this choice of these quantities is so that all three terms in
$a_{1}R+a_{2}S+a_{3}T$ on the right-hand side of equation~\eqref{eq:o} are roughly
the same size,
$O \left( a_{1}a_{2}a_{3} \right)$, and so that the $gL \left( a_{1}R+a_{2}S+a_{3}T \right)$
term on the right-hand side of \eqref{eq:o} is roughly the same size as the other
main term on the right-hand side of \eqref{eq:o}, $\cD(K-1)\log b$.

We first consider condition~\eqref{eq:thm41-i} of Theorem~\ref{thm:main}.
Recalling that $\cV = \left( \left( R_{1}+1 \right) \left( S_{1}+1 \right) \left( T_{1}+1 \right) \right)^{1/2}$,
we see that $\left( R_{1}+1 \right) \left( S_{1}+1 \right) \left( T_{1}+1 \right)
> K \chi \cV$ holds, if
$\left( c_{1}^{3} \left (a_{1} a_{2} a_{3} \right)^{2} \right)^{1/2} \geq \chi m a_{1} a_{2} a_{3} L$.
I.e., $c_{1} \geq (\chi m L)^{2/3}$.

Next we establish conditions for
\[
\left( R_{1}+1 \right) \left( S_{1}+1 \right) \left( T_{1}+1 \right)
> K \cdot \max \left\{ R_{1}+S_{1}+1, S_{1}+T_{1}+1, R_{1}+T_{1}+1 \right\}
\]
to hold.
We consider the special case $a_{1}\le a_{2} \le a_{3}$ (the other cases are the
same), then $T_{1} \le S_{1} \le R_{1}$ and we want to show that
\[
\left( R_{1}+1 \right) \left( S_{1}+1 \right) \left( T_{1}+1 \right) > 
K \left( R_{1}+S_{1}+1 \right).
\]

Using the expressions for these quantities, this inequality will hold if
\[
\left( R_{1}+1 \right) c_{1}^{2}a_{1}^{2}a_{2}a_{3}
> mLa_{1}a_{2}a_{3} \left( R_{1}+1+c_{1}a_{1}a_{3} \right)
\]
holds.
If $ax>bx+c$ with $a,b,c>0$ holds for $x=x_{0}$, then it holds for all $x \geq x_{0}$.
So it suffices to show that
$c_{1}^{3}a_{1}^{2}a_{2}^{2}a_{3}^{2}
\geq mLa_{1}a_{2}a_{3} \left( c_{1}a_{2}a_{3}+c_{1}a_{1}a_{3} \right)$ holds.
This will hold if
$c_{1}^{2}a_{1}^{2}a_{2}^{2}a_{3}^{2} \geq 2mL\Omega^{2} \left( a_{1}^{-1}+a_{2}^{-1} \right)$
holds. That is, when
$c_{1}^{2} \geq \left( a_{1}^{-1}+a_{2}^{-1} \right) mL$ holds. In the general case, the wanted condition holds if
\[
c_{1}^{2} \geq \left( a^{-1}+a'^{-1} \right) mL, \quad \text{ where} \ a=\min \left\{ a_{1},a_{2},a_{3} \right\}
\text{ and }
\ a'=\min \left( \left\{ a_{1}, a_{2}, a_{3} \right\} \setminus \{a\} \right).
\]

Condition~\eqref{eq:thm41-ii} of Theorem~\ref{thm:main} holds when
$2c_{1}^{2} a_{1} a_{2} a_{3} \cdot \min \left\{ a_{1}, a_{2}, a_{3} \right\}
=2c_{1}^{2} \Omega a > L$,
provided that $c_{1}>2^{1/3}$ (since $\Omega \geq 2$). This inequality arises from the second part of
\eqref{hypo:M}, with the factor of $2$ on the left-hand side coming from the
fact that the order of the root of unity is at least $2$. The condition that
$c_{1}>2^{1/3}$ ensures that condition~\eqref{eq:thm41-ii} also holds when the
first part of \eqref{hypo:M} holds.

Thus, since we suppose $m \geq 1$ and also $\Omega \geq 2$, we can take
\begin{equation}
\label{eq:c1-value}
c_{1} = \max \left\{ 2^{1/3}, (\chi mL)^{2/3}, \left( \frac{2mL}{a} \right)^{1/2} \right\}.
\end{equation}

Our treatment of condition~\eqref{eq:thm41-iii} of Theorem~\ref{thm:main} is very
similar to that for condition~\eqref{eq:thm41-ii}. We want $2c_{2}^{2}\Omega>2KL$.
Thus $c_{2} = \sqrt{m/a}\, L$.

To satisfy condition~\eqref{eq:thm41-iv} of Theorem~\ref{thm:main}, we need
\[
\left( R_{2}+1 \right) \left( S_{2}+1 \right) \left( T_{2}+1 \right)
>K^{2}.
\]
 
Using our expressions above, this will hold if $c_{2}^{3}>m^{2}L^{2}$.

Combining these two expressions for $c_{2}$, we require
\begin{equation}
\label{eq:c2-value1}
c_{2} = \max \left\{ (mL)^{2/3}, \sqrt{m/a}\, L \right\}.
\end{equation}
Note we do not require $c_{2} \geq 2^{1/3}$ here explicitly, since $m \geq 1$
and $L \geq 5$ ensures that $(mL)^{2/3}>2^{1/3}$.

Finally, because of the hypothesis in \eqref{hypo:M}, we have $\Lambda \not\in i\pi \bbQ$
by Lemma~\ref{lem:condM2}. So, by Lemma~\ref{lem:condM1}, condition~\eqref{eq:thm41-v}
of Theorem~\ref{thm:main} holds for
\begin{equation}
\label{eq:c3-value}
c_{3} = \left( 3m^{2} \right)^{1/3}L.
\end{equation}

\begin{rem-nonum}
When $\alpha_{1}$, $\alpha_{2}$, $\alpha_{3}$ are multiplicatively independent
then it is enough to take $c_{1}$ and $c_{3}$ as above and
\begin{equation}
\label{eq:c2-value2}
c_{2} = (mL)^{2/3}.
\end{equation}
\end{rem-nonum}

\subsection{The degenerate case}
\label{subsect:degen}

In this subsection, we present some informal arguments for what happens in the
degenerate case. We obtain
\[
\log \left| \Lambda \right| \gg -a_{1}a_{2} a_{3} \min \left\{ a_{1}, a_{2}, a_{3} \right\} (\cD \log B)^{8/3}.
\]

\begin{rem-nonum}
It is this worse dependence on $\log B$ than in the non-degenerate case that
leads to the degenerate case having an impact on the results obtained in practice.
Fortunately, it is the constants that are important and our estimates should
lead to good results when compared to published previously ones (e.g., \cite{Matveev}).
See the examples in the next section for evidence of this.
\end{rem-nonum}

From condition~\eqref{eq:C1} in Theorem~\ref{thm:main}, we have
\[
b_{1} \leq \max \left\{ R_{1},R_{2} \right\}, \quad
b_{2} \leq \max \left\{ S_{1},S_{2} \right\} \quad \text{and } \quad
b_{3} \leq \max \left\{ T_{1},T_{2} \right\}.
\]

We now focus our attention on condition~\eqref{eq:C2}.
In the remainder of this subsection we put $\chi=1$. We have
\[
u_{1}b_{1}+u_{2}b_{2}+u_{3}b_{3}=0,
\]
with
\[
\left| u_{1} \right| \leq \frac{(S_{1}+1)(T_{1}+1)}{\cM-\max \{ S_{1}, T_{1} \}},
\quad \left| u_{2} \right| \leq \frac{(R_{1}+1)(T_{1}+1)}{\cM-\max \{ R_{1}, T_{1} \}}
\quad \text{and} \quad
\left| u_{3} \right| \leq \frac{(R_{1}+1)(S_{1}+1)}{\cM-\max \{ R_{1}, S_{1} \}}
\]
where
\[
\cM = \max \left\{ R_{1}+S_{1}+1, S_{1}+T_{1}+1, R_{1}+T_{1}+1, \chi \cV \right\}.
\]

This essentially implies that
\[
\left| u_{1} \right| \leq \sqrt{c_{1}} a_{1}/\chi, \quad
\left| u_{2} \right| \leq \sqrt{c_{1}} a_{2}/\chi \quad \text{and} \quad
\left| u_{3} \right| \leq \sqrt{c_{1}} a_{3}/\chi,
\]
since $R_{1}\approx c_{1} a_{2} a_{3}$, 
$S_{1}\approx c_{1} a_{1} a_{3}$, $T_{1}\approx c_{1} a_{1} a_{2}$ and
typically $\cM=\chi \cV \approx \chi c_{1}^{3/2} a_{1} a_{2}a_{3}$.

Suppose we eliminate $b_{1}$. Then
\[
u_{1} \Lambda = u_{1}b_{1}\log \alpha_{1} + u_{1}b_{2}\log \alpha_{2}
+ u_{1} b_{3} \log \alpha_{3}
= b_{2} \left( -u_{2} \log \alpha_{1}+u_{1}\log \alpha_{2} \right)
+ b_{3} \left( u_{1}\log \alpha_{3}-u_{3}\log \alpha_{1} \right).
\]
Applying \cite{LMN} to this linear form in two logs we get
\[
- \log \left| \Lambda \right|
\ll \left( \left| u_{1} \right| a_{2} + \left| u_{2} \right| a_{1} \right)
\left( \left| u_{1} \right| a_{3} + \left| u_{3} \right| a_{1} \right) \cD^{2}\log^{2} B,
\]
where (being somewhat pessimistic)
$B = \max \left\{ \left| b_{1} \right|, \left| b_{2} \right|, \left| b_{3} \right| \right\}$,
and the implied constant is an absolute constant. Using the upper
bounds for the $\left| u_{1} \right|$'s, we get
\[
- \log \left| \Lambda \right| \ll 
\left( \sqrt{c_{1}} a_{1} a_{2}/\chi \right) \left( \sqrt{c_{1}} a_{1} a_{3}/\chi \right)
\cD^{2}\log^{2} B 
\ll a_{1}^{2} a_{2} a_{3} L^{2/3}\cD^{2}\log^{2} B/\chi^{2},
\]
since we have $c_{1} \ll L^{2/3}$. Recalling that $L=O \left( \cD \log B \right)$,
we get
\[
-\log \left| \Lambda \right| \ll a_{1}^{2} a_{2} a_{3} (\cD\log B)^{8/3},
\]
where the implied constant is again absolute.

In the two remaining cases, where we eliminate $b_{2}$ or $b_{3}$, the argument
is identical and we obtain similar results:
\[
-\log \left| \Lambda \right|
\ll a_{1}a_{2}a_{3}a_{i}(\cD\log B)^{8/3},
\]
where we eliminate $b_{i}$. This suggests eliminating $b_{i}$ where $a_{i}=
\min \left\{ a_{1}, a_{2}, a_{3} \right\}$. This choice works best in our
examples below too.

Of course, one could use \cite{Matveev} (or any $\log B$ type lower bound) instead
of \cite{LMN}. This would lead to a lower bound for $\log \left| \Lambda \right|$
with $(\log B)^{5/3}$ instead of $(\log B)^{8/3}$. However, it would also lead
to a much larger constant and it is that constant that is more important than the
dependence on $B$ for our use here.

\section{Examples}
\label{sect:eg}

To demonstrate how to use our kit, we give two examples here, revisiting the
linear forms in three logs that arose in \cite{BMS1} and \cite{BMS2}.

These examples also provide comparison for readers. In both \cite{BMS1} and
\cite{BMS2}, the authors used earlier
versions of our kit due to the first author (see Section~12 of \cite{BMS1} and
Section~14 of \cite{BMS2}). In the first example \cite{BMS1}, the authors showed
that if $y^{p}=F_{n}$, then $p<197 \cdot 10^{6}$. Here we
obtain $p<18 \cdot 10^{6}$, roughly $11$ times smaller than the bound in \cite{BMS1}.
For the second example, we improve the upper bound in \cite{BMS2} as well as
correct mistakes in \cite{BMS2}.

We start with the following sharpening of Lemma~2.2 of \cite{PW} that we will
use throughout this section and in our code. In fact, it is explicit in their
proof. Roughly speaking, it removes the factor of $2^{h}$ from their result,
yielding bounds very close to the actual largest solution.

\begin{lem}
\label{lem:PW}
Let $a \geq 0$, $h \geq 1$ and $b> \left( 1/h \right)^{h}$ be real numbers
and let $x \in \bbR$ be the largest solution of $x=a+b \left( \log x \right)^{h}$.
Put $c=hb^{1/h}$. Then,
\[
x<\left( c \log c + \frac{\log c}{\log(c)-1} \left( a^{1/h} + c \log \log c \right) \right)^{h}.
\]
\end{lem}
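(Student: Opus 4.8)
The plan is to analyze the fixed-point equation $x = a + b(\log x)^h$ by substituting $x = y^h$ and reducing to a one-dimensional equation, then bounding the largest root by exhibiting an explicit value that the iteration map already dominates.

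First I would set $c = hb^{1/h}$ and write $x = y^h$ with $y > 0$. Since $b > (1/h)^h$ we have $c > 1$, so $\log c$ and $\log\log c$ make sense with $\log c > 0$; I should note that $\log(c) - 1$ might be negative or zero when $c \le e$, so the cleanest route is to argue the bound is only interesting (and the stated formula only needed) when $c$ is not too small, and handle small $c$ separately or observe the right-hand side is increasing appropriately. Substituting, the equation $x = a + b(\log x)^h$ becomes $y^h = a + b h^h (\log y)^h = a + c^h (\log y)^h$. Taking $h$-th roots (all quantities nonnegative), the largest solution satisfies $y \le a^{1/h} + c\log y$, using the subadditivity $(u+v)^{1/h} \le u^{1/h} + v^{1/h}$ for $u,v \ge 0$, $h \ge 1$. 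So it suffices to bound the largest solution $y$ of the linear-logarithmic inequality $y \le A + c\log y$ where $A = a^{1/h}$, and then $x = y^h$ gives the claimed bound once I show $y < c\log c + \frac{\log c}{\log c - 1}(A + c\log\log c)$.

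The core step is thus: if $y \le A + c\log y$ with $c > 1$, $A \ge 0$, show $y < c\log c + \frac{\log c}{\log c - 1}(A + c\log\log c)$. The standard technique is to first get a crude bound $y < $ something like $Cc\log c$ by noting $\log y < y/c$ cannot hold for large $y$ — more precisely, bootstrapping: from $y \le A + c\log y$ and the fact that $t \mapsto A + c\log t - t$ is eventually negative, locate a value $y_0$ with $A + c\log y_0 \le y_0$; then $y \le y_0$. A convenient choice is $y_0 = c\log c + \kappa$ for a correction term $\kappa$, and one checks $A + c\log(c\log c + \kappa) \le c\log c + \kappa$. Write $\log(c\log c + \kappa) = \log c + \log\log c + \log(1 + \kappa/(c\log c))$ and bound $\log(1+u) \le u$; this reduces the required inequality to $A + c\log\log c + \kappa/\log c \le \kappa$, i.e. $\kappa(1 - 1/\log c) \ge A + c\log\log c$, i.e. $\kappa \ge \frac{\log c}{\log c - 1}(A + c\log\log c)$. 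Taking $\kappa$ equal to this lower bound gives exactly $y \le c\log c + \frac{\log c}{\log c - 1}(A + c\log\log c)$, and strictness comes from the strict inequality $\log(1+u) < u$ for $u > 0$ (or from the fact that the largest solution of the original equation is strictly less than the value where the iteration map first falls below the diagonal when $a > 0$ or $c\log c > 0$).

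The main obstacle I anticipate is the regime $1 < c \le e$ (or $c$ just slightly above $e$), where $\log c - 1 \le 0$ and the denominator in the claimed bound is problematic: there the stated formula either blows up or is vacuous, so I expect the intended reading is that $c > e$ is implicitly forced in applications, or that Pethő–Weis' original proof (which this refines) already restricts to that case; I would add a remark that the inequality $b > (1/h)^h$ guarantees $c > 1$ but that the displayed bound is to be understood in the range where $\log c > 1$, with the sub-case $c \le e$ being either trivial or not needed. The only other delicate point is making the subadditivity step and the $\log(1+u) \le u$ step rigorous with the right direction of inequality when $A = 0$; these are routine but worth stating cleanly. I do not expect to need anything beyond elementary real analysis, so no earlier result from the paper is invoked.
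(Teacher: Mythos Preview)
Your argument is correct and matches the paper's proof: both reduce via subadditivity of the $h$-th root to $x^{1/h} \le a^{1/h} + c\log(x^{1/h})$ and then use $\log(1+u)\le u$ to bound the solution, with your substitution $y_0 = c\log c + \kappa$ equivalent to the paper's parametrisation $x^{1/h} = (1+y)c\log c$. Your caveat about the regime $c \le e$ is well taken---the paper's final step also divides by $\log c - 1$ and so implicitly needs $\log c > 1$, leaving that edge case unaddressed there as well.
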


\begin{proof}
This is the inequality on the second-last line of the proof of Lemma~2.2 of \cite{PW}
with a weaker condition on $b$, so we reprove their lemma to justify this weaker
condition.

Since $h \geq 1$, we know that $\left( z_{1}+z_{2} \right)^{1/h} \leq z_{1}^{1/h}+z_{2}^{1/h}$
for any positive real numbers, $z_{1}$ and $z_{2}$. Applying this to our expression
for $x$, we obtain
\[
x^{1/h} \leq a^{1/h} + c \log \left( x^{1/h} \right),
\]
where $c=hb^{1/h}$, provided $a>0$, $c>0$ and $x>1$.
Put $x^{1/h}=(1+y)c \log(c)$. We also have $c \log \left( x^{1/h} \right)
\leq x^{1/h}$ under these conditions. Hence
$c\log(c)+c\log \log \left( x^{1/h} \right) \leq x^{1/h}$.
So as long as $x>e^{h}$ and $\log(c)>0$, we have $y>0$ above.

Thus
\begin{align*}
(1+y)c \log (c) &= x^{1/h} \leq a^{1/h} + c \log (1+y) + c \log(c) + c \log \log (c) \\
& \leq a^{1/h} + cy + c \log(c) + c \log \log (c).
\end{align*}

Hence
\[
yc \left( \log(c)-1 \right) < a^{1/h} + c \log \log (c).
\]

The upper bound for $x$ in our lemma now follows, as in the proof of Lemma~2.2
of \cite{PW} except that the condition $c>e^{2}$ is not needed here.
\end{proof}

\subsection{Example~1: $y^{p}=F_{n}$}

\begin{thm}
\label{thm:eg1}
If $y^{p}=F_{n}$ has a solution for an odd prime $p$ and $y>1$, then
\begin{equation}
\label{eq:eg1}
p<18 \cdot 10^{6}.
\end{equation}
\end{thm}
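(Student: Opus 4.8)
The idea is to run the five-step kit on the linear form in three logs attached to the equation $y^{p}=F_{n}$, using Theorem~\ref{thm:main} as the engine in steps~(3)--(4). First I would recall the standard setup from \cite{BMS1}: writing $\gamma=(1+\sqrt5)/2$ and $\bar\gamma=-1/\gamma$, the Binet formula $F_{n}=(\gamma^{n}-\bar\gamma^{n})/\sqrt5$ lets one express $y^{p}=F_{n}$ as a relation forcing a linear form
\[
\Lambda = p\log y - n\log\gamma + \log\sqrt5
\]
(or a suitable variant, after reducing to the ``real case'' with three multiplicatively independent algebraic numbers of the right shape) to be exponentially small in $n$, say $|\Lambda|\ll \gamma^{-2n}$. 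This is step~(1). Then step~(2): feeding this into Matveev's Theorem~\ref{thm:Mat} with $D=2$, $\chi=1$, and the obvious height estimates ($\h(\gamma)=\tfrac12\log\gamma$, $\h(y)=\tfrac1p\log y=\tfrac1p\log F_{n}/p\approx \tfrac{n}{p}\log\gamma/p$, $\h(\sqrt5)=\tfrac12\log5$), together with $n\asymp p\log y$, yields a first bound $p<B_{1}$, of size roughly $2\cdot10^{12}$ as quoted in the introduction.

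Next comes the heart of the argument, steps~(3)--(5): apply Theorem~\ref{thm:main} to the same $\Lambda$. Here I would follow Section~\ref{sect:how-to} verbatim — choose an integer $L\ge5$ and real parameters $m\ge1$, $\rho\ge2$, $\chi>0$, set $a_{i}\ge\rho|\log\alpha_{i}|-\log|\alpha_{i}|+2\cD\h(\alpha_{i})$, put $K=\lfloor mLa_{1}a_{2}a_{3}\rfloor$ and the $R_{j},S_{j},T_{j}$ via \eqref{eq:RST-values} with $c_{1},c_{2},c_{3}$ from \eqref{eq:c1-value}, \eqref{eq:c2-value1}/\eqref{eq:c2-value2}, \eqref{eq:c3-value}. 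Condition~\eqref{eq:o} then becomes an inequality of the shape $(KL/2)\log\rho\gtrsim (\cD+1)\log N+gL(a_{1}R+a_{2}S+a_{3}T)+\tfrac{2\cD(K-1)}{3}\log b$, which, after substituting the parameter choices and using $\log b=O(\log B_{1})$ (via Lemma~\ref{lem:Stir}(b) and the bounds $b_{i}\le B_{1}$), reduces to a single constraint relating $L$, $\log B_{1}$ and the fixed quantities $a_{1}a_{2}a_{3}$. Solving this for the optimal $L$ — which is where Lemma~\ref{lem:PW} enters, since $L$ satisfies an equation of the form $L=a+b(\log L)^{h}$ after unwinding — gives the lower bound $\log|\Lambda'|\ge -KL\log\rho$, and comparing with the upper bound $|\Lambda|\ll\gamma^{-2n}$ from step~(1) (note $\Lambda'$ and $\Lambda$ differ only by the harmless factor $LTe^{LT|\Lambda|/(2b_{3})}/(2|b_{3}|)$, which is polynomially bounded) produces the improved bound $p<B_{2}$. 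One then checks the degenerate alternatives \eqref{eq:C1} and \eqref{eq:C2} are impossible or lead to smaller bounds: \eqref{eq:C2} cannot hold because $y$, $\gamma$, $\sqrt5$ are multiplicatively independent (so $\Lambda\notin i\pi\bbQ$ and the map in Lemma~\ref{lem:condM1} is injective), while \eqref{eq:C1} forces $p=|b_{1}|\le\max\{R_{1},R_{2}\}$, which is already far below $B_{2}$. Iterating (step~(5)) — re-running with $B_{1}$ replaced by the new bound — and optimising numerically over $(L,m,\rho,\chi)$ by the brute-force search promised in Section~\ref{sect:how-to}, one drives the bound down to $p<18\cdot10^{6}$.

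The main obstacle I expect is purely computational bookkeeping rather than conceptual: verifying that conditions~\eqref{eq:thm41-i}--\eqref{eq:thm41-v} actually hold for the chosen parameters (this is exactly what the $c_{i}$ were designed to guarantee, but one must confirm $\Omega=a_{1}a_{2}a_{3}\ge2$ and $m\ge1$ and that the root-of-unity refinements in Section~\ref{subsec:param-choice} are not needed since all three $\alpha_{i}$ are genuinely multiplicatively independent here, so \eqref{eq:c2-value2} applies), and then carrying out the numerical optimisation to pin down the constant $18\cdot10^{6}$. A secondary subtlety is making the relation $n\asymp p\log y$ precise enough — one needs an explicit inequality like $n\log\gamma\le p\log y + O(1)$ and a lower bound on $y$ — so that $B$ in Matveev's theorem and $b$ in Theorem~\ref{thm:main} are controlled in terms of $p$ alone; this is routine from the Binet formula but must be done carefully to avoid losing a constant factor. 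All of this is best delegated to the Pari code accompanying the paper, and the proof proper consists of recording the optimal parameter values and the resulting chain of inequalities.
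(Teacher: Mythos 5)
Your proposal correctly identifies the overall architecture — set up a real linear form in three logs from Binet's formula, get a first bound from Matveev, then feed it into Theorem~\ref{thm:main} and iterate — and this is indeed how the paper proceeds. However, there is a serious gap in the treatment of the degenerate case, and a secondary omission in the setup of the linear form.

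The main problem is your claim that condition~\eqref{eq:C2} ``cannot hold because $y$, $\gamma$, $\sqrt5$ are multiplicatively independent.'' Multiplicative independence of the $\alpha_{i}$ guarantees $\Lambda\notin i\pi\bbQ$ and is what makes conditions~\eqref{eq:thm41-ii}, \eqref{eq:thm41-iii}, \eqref{eq:thm41-v} and the injectivity in Lemma~\ref{lem:condM1} go through; it says nothing about \eqref{eq:C2}. Condition~\eqref{eq:C2} asserts a small-coefficient linear relation $u_{1}b_{1}+u_{2}b_{2}+u_{3}b_{3}=0$ among the \emph{integer exponents} $b_{1},b_{2},b_{3}$, arising from Lemma~\ref{lem:M3}(a) when the lattice points $(r+t\beta_{1},s+t\beta_{2})$ cluster on a line. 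In the paper's kit application the coefficients are $(b_{1},b_{2},b_{3})=(q,1,p)$, and nothing in the arithmetic of $\omega$, $\sqrt5$, $\omega^{k}/y$ precludes a relation like $u_{1}q+u_{2}+u_{3}p=0$. The paper's Step~(4) is a substantial piece of the argument precisely because \eqref{eq:C2} cannot be dismissed: one must use the relation to eliminate one of the $b_{i}$ (here $b_{1}=q$, following the heuristic in Subsection~\ref{subsect:degen}), reducing $u_{1}\Lambda$ to a linear form in \emph{two} logarithms with algebraic numbers $\sqrt5^{\,u_{1}}\omega^{-u_{2}}$ and $(\omega^{k}/y)^{u_{1}}\omega^{u_{3}}$, and then invoke Laurent's Theorem~2 of \cite{L5} to get a bound $B_{3}$; the final bound at each iteration is $\min\{B_{1},\max\{B_{2},B_{3}\}\}$. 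Omitting this, your argument proves nothing in the degenerate branch. (Also, in the paper's labelling \eqref{eq:C1} gives $p=b_{3}\le\max\{T_{1},T_{2}\}$, not $p=|b_{1}|\le\max\{R_{1},R_{2}\}$.)

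A secondary omission: your linear form $\Lambda=p\log y-n\log\gamma+\log\sqrt5$ carries the coefficient $n$, which is of order $p\log y/\log\gamma$ and hence unbounded in terms of $p$. The crucial preliminary, which ``a suitable variant'' must actually perform, is to write $n=kp-q$ with $0\le q<p$ and absorb $\omega^{kp}$ into the third algebraic number, giving $\Lambda=p\log(\omega^{k}/y)-q\log\omega-\log\sqrt5$ with coefficients $p$, $q$, $1$ all at most $p$, while the height of $\omega^{k}/y$ absorbs the size of $\log y$. Without this, the quantity $b$ of \eqref{eq:b-defn} in condition~\eqref{eq:o} blows up with $n$ and the kit does not produce an absolute bound on $p$. (Relatedly, $\h(y)=\log y$ for a positive integer $y$, not $\tfrac1p\log y$; the relevant height is $\h(\omega^{k}/y)\approx\log y$.) The rest of your outline — the role of Section~\ref{sect:how-to} in choosing $(L,m,\rho,\chi)$, the use of Lemma~\ref{lem:PW} to resolve the implicit $p$-versus-$\log p$ inequality, and the iteration of Steps~(3)--(4) — matches the paper.
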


\begin{proof}
Following Section~13 of \cite{BMS1}, we suppose that $y^{p}=F_{n}$. From Proposition~10.1
of \cite{BMS1}, we have
\begin{equation}
\label{eq:y-LB1}
\log y > 10^{20}.
\end{equation}

Here we will suppose that $p>10 \cdot 10^{6}$, rather than $p>2 \cdot 10^{8}$ in \cite{BMS1}.
The reason for this weaker bound on $p$ is to accommodate the improved upper bound we obtain
here. We will also use the principal branch of the logarithm throughout the proof.

\vspace*{1.0mm}

\noindent
\underline{Step~(1): Linear form definition and upper bound}\\

We now define the linear form in logs we will use and obtain an upper bound for
it.

In Section~13 of \cite{BMS1}, on page~1013, the authors consider
\[
\Lambda=n\log(\omega) - \log \sqrt{5}-p \log(y),
\]
which they rewrite as
\[
\Lambda=p \log \left( \omega^{k}/y \right) - q\log(\omega) - \log \sqrt{5}.
\]
Notice that $-\Lambda$ is in the form we consider in \eqref{eq:lfl-form}.

Here $\omega=\left( 1+\sqrt{5} \right)/2$ and $n=kp-q$ with $0 \leq q < p$.
Note that if $q=0$, then $\Lambda$ is a linear form in two logs and we obtain
a much better upper bound on $p$.

They also state (see the start of the proof of Proposition~11.1 on page~1000 or
the start of Section~13 on page~1013) that
\begin{equation}
\label{eq:lfl1-UB}
\log \left| \Lambda \right| < -2p\log(y)+1.
\end{equation}

\vspace*{1.0mm}

\noindent
\underline{Step~(2): Matveev}\\

In the notation of Theorem~\ref{thm:Mat}, we have $D=2$, $\alpha_{1}=\omega^{k}/y$,
$\alpha_{2}=\omega$, $\alpha_{3}=\sqrt{5}$, $b_{1}=p$, $b_{2}=-q>-p$ and $b_{3}=1$.

Recall that
$A_{j} \geq \max \left\{ D \h \left( \alpha_{j} \right), \left| \log \alpha_{j} \right| \right\}$.
Thus, we can take $A_{2}=\log(\omega)$ and $A_{3}=\log(5)$. For $A_{1}$, we need
a little more work.

From the first expression above for $\Lambda$ and \eqref{eq:lfl1-UB}, we have
\[
\frac{\log \sqrt{5}}{p}-\frac{e}{py^{2p}}
< \frac{n}{p}\log(\omega)-\log(y)
< \frac{\log \sqrt{5}}{p}-\frac{e}{py^{2p}}.
\]

Applying $n=kp-q$ and using $0 \leq q \leq p-1$, we have
\begin{align*}
0<\frac{\log \sqrt{5}}{p}-\frac{e}{py^{2p}}
\leq \frac{q}{p}\log(\omega)+\frac{\log \sqrt{5}}{p}-\frac{e}{py^{2p}}
&<k\log(\omega)-\log(y)
< \frac{q}{p}\log(\omega)+\frac{\log \sqrt{5}}{p}+\frac{e}{py^{2p}} \\
&\leq \log(\omega)-\frac{1}{p}\log(\omega)+\frac{\log \sqrt{5}}{p}+\frac{e}{py^{2p}}
\leq \log(\omega)+\frac{1}{3p}.
\end{align*}

Hence
\begin{equation}
\label{eq:a1-UB}
\left| \log \left( \omega^{k}/y \right) \right|
=\log \left| \omega^{k}/y \right| < \log(\omega)+10^{-6},
\end{equation}
since $p>10 \cdot 10^{6}$.

The conjugate of $\omega^{k}/y$ is $\omega^{-k}/y<1$, so
$\h \left( \omega^{k}/y \right)=(2\log(y)+k\log(\omega)-\log(y))/2=(k/2)\log(\omega)+(1/2)\log(y)$
(the $2\log(y)$ is because we need a factor of $y^{2}$ to clear the denominator
in the minimal polynomial of $\omega^{k}/y$). From \eqref{eq:a1-UB}, we have
$k\log(\omega)<\log(\omega)+\log(y)+10^{-6}$, so
\begin{equation}
\label{eq:hgtA1-UB}
\h \left( \alpha_{1} \right)=\h \left( \omega^{k}/y \right)<(1/2)\log(\omega)+\log(y)+10^{-6}
\end{equation}
and $A_{1}=2\h \left( \alpha_{1} \right) < 2\log(y)+0.4813$.
Thus $\max \left\{ \left| b_{j} \right| A_{j}/A_{1}: 1 \leq j \leq 3 \right\}=p$
and we can take $B=p$.

Applying Matveev's theorem (Theorem~\ref{thm:Mat} above) with $\chi=1$ and the
above quantities gives
\begin{align*}
\log \left| \Lambda \right| & > -\frac{5\cdot 16^{5}}{6} \cdot e^{3} \cdot 9 (3e/2)
\cdot \left( 26.25+\log(4\log(2e)) \right) \cdot 4 \cdot \left( 2\log(y)+0.4813 \right)
\cdot \log \omega \\
& \hspace*{5.0mm} \cdot \log(5) \cdot \log \left( 3ep\log(2e) \right) \\
&>
- \left(
7.10 \cdot 10^{10} + 2.71 \cdot 10^{10}\log(p) + 2.96 \cdot 10^{11}\log(y)
+1.13 \cdot 10^{11} \log(y)\log(p)
\right).
\end{align*}

Combining this lower bound for $\log \left| \Lambda \right|$ with the upper
bound in \eqref{eq:lfl1-UB}, and dividing by $2\log(y)$, we obtain
\[
1.476 \cdot 10^{11} + 5.62 \cdot 10^{10}\log(p)>p,
\]
using \eqref{eq:y-LB1}.

Applying Lemma~\ref{lem:PW} with $a=1.476 \cdot 10^{11}$,
$b=5.62 \cdot 10^{10}$, $h=1$ and $x=p$, so $c=hb^{1/h}=b$ and
\begin{equation}
\label{eq:eg1-nUB1}
p < b\log(b)+\frac{\log(b)}{\log(b)-1}(a+b\log(\log(b)))
<1.74 \cdot 10^{12}.
\end{equation}

The reason we take this step is because we first need an upper bound on
$p$ to control simultaneously the condition in \eqref{eq:o} and the degenerate
cases in our main theorem.

\vspace*{1.0mm}

\noindent
\underline{Step~(3): Non-degenerate case}\\

Here we apply Theorem~\ref{thm:main} to reduce our bound on $p$.

So that our linear form is in the form \eqref{eq:lfl-form}, we set
\[
\alpha_{1}=\omega, \quad
\alpha_{2}=\sqrt{5}, \quad \alpha_{3}=\omega^{k}/y
\quad b_{1}=q, \quad b_{2}=1
\hspace*{3.0mm} \text{ and } \hspace*{3.0mm}
b_{3}=p
\]
and in what follows (Steps~(3) and (4)), put
\[
\Lambda = b_{1} \log \alpha_{1}+b_{2}\log \alpha_{2}-b_{3}\log \alpha_{3}
=q \log \left( \omega \right) + 1 \cdot \log \left( \sqrt{5} \right)
-p \log \left( \omega^{k}/y \right).
\]
This is $-1$ times the $\Lambda$ considered above in Steps~(1) and (2).

Recall that we take
\[
a_{i} \geq \rho \left| \log \alpha_{i} \right|
-\log \left| \alpha_{i} \right| +2\cD \h \left( \alpha_{i} \right)
\]
and here $\cD=2$.

We have $\h(\omega)=\log(\omega)/2$, so we can take
$a_{1}=(\rho+1)\log(\omega)$.

Similarly, $\h \left( \sqrt{5} \right)=\log \left( \sqrt{5} \right)$, so
$a_{2}=(\rho+3)\log \left( \sqrt{5} \right)$.

In Step~(2), we saw that $\log \left| \alpha_{3} \right|=\log \left| \alpha_{3} \right|$
(recall that $\alpha_{3}$ here was denoted by $\alpha_{1}$ there),
so $\rho \left| \log \alpha_{3} \right|-\log \left| \alpha_{3} \right|
=(\rho-1) \log \left| \alpha_{3} \right|$. Applying \eqref{eq:a1-UB}, we obtain
\[
\rho \left| \log \alpha_{3} \right|-\log \left| \alpha_{3} \right|
< (\rho-1) \log \omega + (\rho-1) 10^{-6}.
\]

Combining this with \eqref{eq:hgtA1-UB}, we can take
\[
a_{3} = (\rho-1)\log(\omega)+2\log(\omega)+4\log(y)+(\rho+3) 10^{-6}
=(\rho+1)\log(\omega)+4\log(y)+(\rho+3) 10^{-6}.
\]

To apply Theorem~\ref{thm:main}, we need to select values for all the parameters
there. I.e., the positive rational integers $K$, $L$, $R$, $R_{1}$, $R_{2}$,
$R_{3}$, $S$, $S_{1}$, $S_{2}$, $S_{3}$, $T$, $T_{1}$, $T_{2}$ and $T_{3}$,
along with the real numbers $\rho$ and $\chi$.

We use the work in Section~\ref{sect:how-to} to reduce the amount of choice
involved here.

From \eqref{eq:k-value}, we see that $K$ depends on $a_{1}$, $a_{2}$, $a_{3}$,
$L$ and a real number $m \geq 1$.

From \eqref{eq:RST-values}, we see that the $R_{i}$'s, $S_{i}$'s and $T_{i}$'s
depend on $a_{1}$, $a_{2}$, $a_{3}$ and three positive real parameters $c_{1}$,
$c_{2}$ and $c_{3}$. Furthermore, we put $R=R_{1}+R_{2}+R_{3}+1$,
$S=S_{1}+S_{2}+S_{3}+1$ and $T=T_{1}+T_{2}+T_{3}+1$.

From \eqref{eq:c1-value}, \eqref{eq:c2-value1}, \eqref{eq:c3-value} and
\eqref{eq:c2-value2}, we have values for $c_{1}$, $c_{2}$ and $c_{3}$ in terms of
$m$, $L$, $a_{1}$, $a_{2}$, $a_{3}$ and $\chi$. For our linear form, this just
leaves $m$, $L$, $\rho$ and $\chi$ as unspecified parameters.

To apply Theorem~\ref{thm:main}, we do a brute force search. To minimise the
effect of the degenerate case we will use Theorem~2 of \cite{L5}. But this also
involves a search to obtain the best results, so we do not want to do such an
additional search for every choice of $m$, $L$, $\rho$ and $\chi$ that we consider.
Instead we do the degenerate case only once for each value of $\chi$.

For each of $20$ equidistributed values of $\chi$
satisfying $0.5 \leq \chi \leq 1.5$, we proceed as follows. First, we search
over integer values of $L$ with $100 \leq L \leq 200$,
20 values of each of $m$ and $\rho$ evenly distributed with
$4 \leq m \leq 9$ and $7 \leq \rho \leq 12$ that lead to \eqref{eq:o} being
satisfied and so that $KL\log(\rho)$ is as small as possible. With such a minimal
choice of parameters for Step~3 for each value of $\chi$, we find the associated
bound for Step~4 (the degenerate case) for this choice of parameters. The choice
of $\chi$ that leads to the best bound for both Step~3 and Step~4 is the one we
use.

There is nothing special about using 20 such values. It was only chosen to give
a good balance between speed and finding small admissible values of $KL\log(\rho)$.
The ranges on the parameters were found by experimentation.

This search led to the choice
\[
\chi=0.75, \quad L=167, \quad m=6 \quad \text{ and } \quad \rho=10.
\]

We have 
\[
K= \lfloor Lm a_{1} a_{2} a_{3} \rfloor
= \lfloor 221,945\log(y) \rfloor.
\]

Since $a=a_{1}$ and $a'=a_{2}$, we put
\[ 
c_{1}=82.65\ldots, \quad
c_{2}=100.13\ldots, \quad c_{3}=795.28\ldots.
\]

Using these values and the values of the $R_{i}$'s in \eqref{eq:RST-values}, we get 
\[
R_{1}=\lfloor c_{1} a_{2}a_{3} \rfloor =\lfloor 3458.9\log(y) \rfloor, \quad
R_{2}=\lfloor c_{2} a_{2}a_{3} \rfloor =\lfloor 4190.2\log(y) \rfloor,
\]
and
\[ 
R_{3}=\lfloor c_{3}a_{2}a_{3}\rfloor = \lfloor 33280\log(y) \rfloor.
\]

Further 
\[
S_{1}=\lfloor c_{1} a_{1}a_{3} \rfloor = \lfloor 1750.2\log(y) \rfloor, \quad
S_{2}=\lfloor c_{2} a_{1}a_{3} \rfloor = \lfloor 2120.2\log(y) \rfloor, \quad 
S_{3}=\lfloor c_{3} a_{1}a_{3} \rfloor = \lfloor 16839\log(y) \rfloor
\]
and finally
\[
T_{1}=\lfloor c_{1} a_{1}a_{2} \rfloor = 4577, \quad
T_{2}=\lfloor c_{2} a_{1}a_{2} \rfloor = 5544, 
\]
and
\[
T_{3}=\lfloor c_{3}a_{1}a_{2} \rfloor = 44,039.
\]

With $\cV = \left( \left( R_{1}+1 \right) \left( S_{1}+1 \right) \left( T_{1}+1 \right) \right)^{1/2}$,
we have $\chi \cV > 124,000\log(y)$, while
$5210\log(y) > R_{1}+S_{1}+1=\max \left\{ R_{1}+S_{1}+1,\, S_{1}+T_{1}+1,\,R_{1}+T_{1}+1 \right\}$,
since $\log(y)>10^{20}$, so $\cM=\chi \cV$.

With these choices, along with our lower bound
for $y$ and upper bound for $p$, we also find that
\begin{align*}
\log \left( b_{3}'\eta_{0} \right)
&< \log \left( (20465\log(y) + 27080)p \right)
<\log \log (y) + 38.11 \quad \text{ and} \\
\log \left( b_{3}''\zeta_{0} \right)
&< \log \left( (10355\log(y) - 1/2)p + 27080 \right)
< \log \log (y) + 37.43.
\end{align*}

Combining these estimates with Lemma~\ref{lem:Stir}(a) and our expression above
for $K$, we obtain
\[
\log (b')<54.58.
\]

As seen in Subsection~\ref{subsec:param-choice}, these choices imply that the
conditions~\eqref{eq:thm41-i}--\eqref{eq:thm41-v} of Theorem~\ref{thm:main} hold.
Moreover, the above choices have been made so that condition \eqref{eq:o} holds.

Thus we have 
\[
\log \left| \Lambda \right| \geq -KL\log \rho-\log(KL)>-8.535 \cdot 10^{7} \log(y).
\]

Combining this with the upper bound from \eqref{eq:lfl1-UB}, we get
\[
p<42.68 \cdot 10^{6}.
\]

\vspace*{1.0mm}

\noindent
\underline{Step~(4): Degenerate case}\\

Under condition~\eqref{eq:C1} of Theorem~\ref{thm:main}, we obtain
\[
p=b_{3} \leq \max \left\{ T_{1},T_{2} \right\} < 5600,
\]
which is excluded since we assume $p>10 \cdot 10^{6}$.

So we now consider condition~\eqref{eq:C2} of Theorem~\ref{thm:main}, where we have
\[
u_{1}b_{1} + u_{2}b_{2}+u_{3}b_{3}=u_{1}q + u_{2} + u_{3}p=0
\]
with $\gcd \left( u_{1}, u_{2}, u_{3} \right) = 1$.

We put
\[
U_{1}:=\frac{\left( S_{1}+1 \right) \left( T_{1}+1 \right)}
    {\cM-\max \{ S_{1}, T_{1} \}},
\hspace*{2.0mm}
U_{2}:=\frac{(R_{1}+1)( T_{1}+1) }
    {\cM-\max \{ R_{1}, T_{1} \}}
\hspace*{2.0mm} \text{ and } \hspace*{2.0mm}
U_{3}:=\frac{\left( R_{1}+1 \right) \left( S_{1}+1 \right)}
    {\cM-\max \{ R_{1}, S_{1} \}}.
\]

From the values of the relevant quantities in Step~(3) and $\log(y)>10^{20}$, we obtain
\[
\left| u_{1} \right| \leq \lfloor U_{1} \rfloor = 65, \hspace*{3.0mm}
\left| u_{2} \right| \leq U_{2} = 130 \hspace*{3.0mm} \text{ and }
\left| u_{3} \right| \leq \lfloor U_{3} \rfloor < 49.87\log(y).
\]

We will use this linear relation between the $b_{i}$'s to reduce the linear form,
$\Lambda$, to one in two logarithms. Let us make a remark here about how we
choose which $b_{i}$ to eliminate.

\begin{rem-nonum}
We can only eliminate a $b_{i}$ with $U_{i}$ bounded above by a constant.
Trying to eliminate a $b_{i}$ with $U_{i}$ depending on some parameter (like $U_{3}$
here depending on $\log y$) leads to both the quantities $a_{1}$ and $a_{2}$ in
Theorem~2 of \cite{L5} depending on that parameter, so we do not get an
absolute upper bound on the quantity we are interested in (i.e., $p$ here).

Here this means that we eliminate either $b_{1}=q$ or $b_{2}=1$.
Since $a_{1}<a_{2}$ here, our heuristic argument in Subsection~\ref{subsect:degen}
above suggests that we eliminate $b_{1}$.
\end{rem-nonum}

In our Pari/GP code, we tried eliminating both possibilities ($b_{1}$ and $b_{2}$)
and the best upper bound for $p$ comes from eliminating $b_{1}=q$. As noted
above, this is in keeping with our heuristic argument in Subsection~\ref{subsect:degen}.
So we consider $u_{1} \Lambda$:
\begin{align*}
u_{1} \Lambda &= u_{1}q\log(\omega) 
+u_{1}\log \left( \sqrt{5} \right)
- u_{1}p\log \left( \omega^{k}/y \right) \\
&= -\left( u_{2}q+u_{3}p \right)\log \left( \omega \right)
+u_{1} \log \left( \sqrt{5} \right)
- u_{1}p\log \left( \omega^{k}/y \right) \\
&= \log \left( \sqrt{5}^{u_{1}} \cdot \omega^{-u_{2}} \right)
- p \log \left( \left( \omega^{k}/y \right)^{u_{1}} \cdot \omega^{u_{3}} \right).
\end{align*}

We will use Theorem~2 in \cite{L5} to obtain lower bounds for this linear form.

We put $\alpha_{1}'=\sqrt{5}^{u_{1}} \cdot \omega^{-u_{2}}$,
$\alpha_{2}'=\left( \omega^{k}/y \right)^{u_{1}} \cdot \omega^{u_{3}}$,
$b_{1}=1$ and $b_{2}=p$.
We use $\alpha_{1}'$ and $\alpha_{2}'$ here for $\alpha_{1}$ and $\alpha_{2}$ in
\cite{L5} in order not to confuse it with our $\alpha_{1}$
and $\alpha_{2}$ above. As mentioned above, using Laurent's Theorem~2 requires
a search, here for the quantities that he labels as $\varrho$ (which plays the
analogous role for linear forms in two logs as our $\rho$) and $\mu$. Once
again, we do a brute force search over $20$ equidistributed values of each
parameter with $7 \leq \varrho \leq 11$ and $0.5 \leq \mu \leq 0.7$.
In this way, we take
\[
\varrho=10, \qquad \mu=0.61, \qquad a_{1}=1368.2
\qquad \text{ and } \qquad
a_{2}=524\log(y).
\]

We have
\[
\frac{b_{1}}{a_{2}}+\frac{b_{2}}{a_{1}}
<0.00074p.
\]
So $2\log(p)-9.373<h<\log(p)-2\log(p)-9.372$. Thus
\[
\log \left| \Lambda \right|
>423,900 \left( \log(p)-4.687 \right)^{2} \log(y).
\]

Combining this with the upper bound for $\Lambda$ in \eqref{eq:lfl1-UB}, we get
\[
-423,900 \left( \log(p)-4.687 \right)^{2} \log(y) < -2p\log(y)+\log \left| e \right|.
\]

Dividing both sides by $2\log(y)$, using $\log(y)>10^{20}$ and
again applying Lemma~\ref{lem:PW} with $a=\log(e)/\left( 2 \cdot 10^{20}\exp(4.687) \right)<10^{-6}$,
$b=423,900/ \left( 2 \exp(4.687) \right)$, $h=2$ and $x=p/\exp(4.687)$, we get
$c<88.41$ and
\[
p<34.86 \cdot 10^{6}.
\]

But we also have to consider the case that we cannot eliminate $b_{1}$.
This is the case when $u_{1}=0$. We proceed in the same way as we just did, but
now eliminate $b_{2}$, since $u_{2}$ is bounded above by a constant. Doing so
gives us the upper bound $p<39 \cdot 10^{6}$.


Combining this with the result of Step~(3), we have proved that $p \leq 42.68 \cdot 10^{6}$.

\vspace*{1.0mm}

\noindent
\underline{Step~(5): Iteration of Steps~(3) and (4)}\\

As in \cite{BMS1}, we repeated Steps~(3) and (4) a second time to obtain the improved
upper bound $p \leq 19.4 \cdot 10^{6}$.

We repeat this same search a third time with this further improved upper bound for $p$
to obtain $p<17.92 \cdot 10^{6}$.

\begin{center}
\begin{tabular}{|c|c|c|c|c|c|c|c|c|} \hline
iteration & \text{initial upper bound for $p$} & $L$ & $m$ & $\rho$ & $\chi$ & $\varrho$ & $\mu$ & \text{new upper bound for $p$} \\ \hline
$1$ & $1.8  \cdot 10^{12}$ & $167$ &    $6$ &   $10$ &  $0.75$ &  $10$ & $0.61$ & $43 \cdot 10^{6}$ \\ \hline
$2$ &   $43 \cdot  10^{6}$ & $105$ & $7.25$ & $9.75$ &  $1.03$ &  $10$ & $0.61$ & $19.4 \cdot 10^{6}$ \\ \hline
$3$ & $19.4 \cdot  10^{6}$ & $104$ &  $7.4$ &  $9.4$ &  $1.06$ & $9.8$ & $0.61$ & $17.92 \cdot 10^{6}$ \\ \hline
\end{tabular}
\end{center}

The three iterations took 180, 187 and 70 seconds on a Windows laptop with
an Intel i7-9750H 2.60GHz CPU and 16Gb of RAM.

The third iteration gives us the upper bound for $p$ stated in the theorem.
\end{proof}

From the table, one can see that little improvement is obtained
after the second iteration.

If one could ignore the degenerate case, as we conjecture should be possible,
and only consider the inequality
\eqref{eq:o} for the non-degenerate case, then one would obtain $p<12.4 \cdot 10^{6}$
instead. So we are within $50\%$ of the best possible result that our transcendence
argument can provide. Our kit should always provide such proximity to the optimal result
when considering the real
case for our linear forms in logs (as described in Subsection~\ref{subsect:conventions}).

\subsection{Example~2: $x^{2} + 7 = y^{p}$}

This is the case $D=7$ examined in detail in Section~15 of \cite{BMS2}.
There the authors claimed that $p<130 \cdot 10^{6}$. Our work here suggests
that the best possible bound they could have obtained was $p<156 \cdot 10^{6}$.
While our result here is over $6$ times smaller than this, our improvement here is
not as large as for the previous example. The reason is because in \cite{BMS2},
the zero estimate of Laurent \cite{L4}, given in Appendix~A below, was used. This
was an improvement over the zero estimate used in \cite{BMS1}.

So we take the opportunity here to correct the handling of $D=7$ in that paper.
In addition to the above, not all of the $R_{i}$'s, $S_{i}$'s and $T_{i}$'s can
be constants as stated in Section~15 of \cite{BMS2}. A dependence on $\log(y)$
is required. See our correct choice of these parameters in Step~(3) below.

One last note about our result here. The upper bound for $p$ is the best possible
one, given our inequality \eqref{eq:o} for the non-degenerate case. The degenerate
case does not adversely affect the results we obtain here.
This turns out to always happen when, as here, we are considering the imaginary
case for our linear forms in logs (as described in Subsection~\ref{subsect:conventions}).

\begin{thm}
\label{thm:eg2}
If $x^{2}+7=y^{p}$ has a solution for a prime $p \geq 3$ with $x, y \in \bbZ$,
then
\begin{equation}
\label{eq:eg2}
p<25 \cdot 10^{6}.
\end{equation}
\end{thm}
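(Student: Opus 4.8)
The plan is to follow the same five-step template used for Theorem~\ref{thm:eg1}, applied to the equation $x^{2}+7=y^{p}$, which is the classical Ramanujan--Nagell type equation ($D=7$) treated in Section~15 of \cite{BMS2}. First I would reduce the equation to a linear form in three logarithms. Working in $\bbQ(\sqrt{-7})$, where $7=-\left(\frac{1+\sqrt{-7}}{2}\right)\!\left(\frac{1-\sqrt{-7}}{2}\right)\cdot$(unit adjustment), one factors $x^{2}+7=(x+\sqrt{-7})(x-\sqrt{-7})=y^{p}$; since the ideal $(2)$ splits and the class number of $\bbQ(\sqrt{-7})$ is $1$, one obtains $x+\sqrt{-7}=\pm\left(\frac{a+b\sqrt{-7}}{2}\right)^{p}$ times a unit, leading (after taking logarithms of the ratio of two conjugate algebraic numbers of modulus one) to a linear form $\Lambda=b_{1}\log\alpha_{1}+b_{2}\log\alpha_{2}-b_{3}\log\alpha_{3}$ that falls into the \emph{imaginary case} of Subsection~\ref{subsect:conventions}, with $b_{3}=p$ and $\alpha_{1},\alpha_{2}$ multiplicatively independent while $\alpha_{3}=-1$ (or another root of unity) accounts for sign choices. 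The upper bound $\log|\Lambda|<-cp\log y+O(1)$ comes, as in \cite{BMS2}, from the fact that $\Lambda$ is exponentially small in $p$.

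Next, in Step~(2) I would apply Matveev's theorem (Theorem~\ref{thm:Mat}) with $D=2$, $\chi=1$, and heights $\h(\alpha_{i})$ computed from the algebraic integers above, to produce a first crude bound $p<B_{0}$ of order $10^{13}$; combining with the lower bound on $\log y$ from \cite{BMS2} and applying Lemma~\ref{lem:PW} with $h=1$ sharpens this. Then Step~(3) is the heart of the argument: invoke Theorem~\ref{thm:main} with the parameter choices prescribed in Section~\ref{sect:how-to}. Because we are in the imaginary case with $\alpha_{3}$ a root of unity of order $\nu$, I would use the strengthened forms of conditions~\eqref{eq:cond-i.2} and \eqref{eq:cond-ii.1} from the first remark after \eqref{eq:C.ii.1} ($\nu(R_{1}+1)(S_{1}+1)>L$, etc.), set $K=\lfloor mLa_{1}a_{2}a_{3}\rfloor$, choose $c_{1},c_{2},c_{3}$ via \eqref{eq:c1-value}, \eqref{eq:c2-value1}, \eqref{eq:c3-value}, and run the brute-force search over $L$, $m$, $\rho$, $\chi$ (using the Pari code) so that \eqref{eq:o} holds with $KL\log\rho$ minimised. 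This yields $\log|\Lambda|\geq -KL\log\rho-\log(KL)$, which when combined with the Step~(1) upper bound gives a new bound on $p$ of order a few times $10^{7}$. Crucially, one must verify that $0<|\Lambda|<2\pi/w$ so that $\Lambda\notin i\pi\bbQ$ and condition~\eqref{eq:thm41-v} translates into \eqref{eq:zero-est-iii}; here $\Lambda\notin i\pi\bbQ$ follows from Lemma~\ref{lem:condM2} since $\alpha_{1},\alpha_{2}$ are multiplicatively independent and $\alpha_{3}$ is a nontrivial root of unity.

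Step~(4) handles the degeneracies from conditions~\eqref{eq:C1} and \eqref{eq:C2} of Theorem~\ref{thm:main}. Condition~\eqref{eq:C1} forces $p=b_{3}\leq\max\{T_{1},T_{2}\}$, a small constant, contradicting the working assumption $p>$ (a few million). For \eqref{eq:C2}, one gets a nontrivial relation $u_{1}b_{1}+u_{2}b_{2}+u_{3}b_{3}=0$ with the $|u_{i}|$ bounded via Lemma~\ref{lem:M3}; eliminating the $b_{i}$ whose coefficient bound is an absolute constant (guided by the heuristic in Subsection~\ref{subsect:degen}, so the one with smallest $a_{i}$) reduces $\Lambda$ to a linear form in \emph{two} logarithms, to which Theorem~2 of \cite{L5} applies, again with a small brute-force search over Laurent's parameters $\varrho$ and $\mu$. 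As the paper notes, in the imaginary case the degenerate bound is no worse than the non-degenerate one, so Step~(4) does not degrade the result. Finally, Step~(5) iterates Steps~(3)--(4) two or three times, feeding back the improved bound on $p$ each time, until the bound stabilises at $p<25\cdot10^{6}$.

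The main obstacle I anticipate is \textbf{not} the transcendence machinery—that is essentially mechanical given Theorem~\ref{thm:main} and the code—but rather Step~(1): setting up the correct linear form for $x^{2}+7=y^{p}$, getting the algebraic numbers $\alpha_{1},\alpha_{2},\alpha_{3}$, their degrees, heights, and the coefficients $b_{i}$ exactly right (including the role of the unit/root-of-unity factor and the precise lower bound on $\log y$), and in particular \emph{correcting} the choices of $R_{i},S_{i},T_{i}$ from \cite{BMS2} so that some of them are allowed to depend on $\log y$ rather than all being constants, as the paper explicitly flags. A secondary delicate point is ensuring the condition $0<|\Lambda|<2\pi/w$ and tracking $w$ (the maximal order of a root of unity in $\bbQ(\alpha_{1},\alpha_{2},\alpha_{3})$, here a small number field) so that the hypotheses of Theorem~\ref{thm:main} are genuinely met; once these arithmetic inputs are pinned down, the rest is the routine search already automated in the authors' code.
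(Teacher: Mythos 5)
Your outline follows the paper's five-step scheme essentially exactly, and the overall plan (build the linear form over $\bbQ(\sqrt{-7})$, apply Matveev for a starting bound, invoke Theorem~\ref{thm:main}, handle the degeneracies via Lemma~\ref{lem:M3} and Theorem~2 of \cite{L5}, iterate) is the same as in the paper's proof. Two details in your setup are off, however, and one of them matters for Step~(4).

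First, in Matveev's theorem you take $\chi=1$. Here $\chi=[\bbR(\alpha_1,\alpha_2,\alpha_3):\bbR]$, and since this is the \emph{imaginary} case (the $\alpha_i$ are non-real points of the unit circle), we have $\chi=2$; the paper explicitly takes $D=\chi=2$. Using $\chi=1$ underestimates Matveev's constant by roughly a factor $2.5$, so the crude Step~(2) bound you'd derive would not be valid. (This only changes the starting bound for the iteration, so it isn't fatal, but it must be fixed.)

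Second, and more consequentially, you set $b_3=p$ with $\alpha_3=-1$. That is inconsistent with the paper's (and, given the construction, the natural) assignment: the prime $p$ is the exponent on the $\gamma$-term, so $b_1=p$ pairs with $\alpha_1=\varepsilon_2\overline{\gamma}/\gamma$ (the algebraic number of large height, $\h(\alpha_1)=\tfrac12\log y$), while $\alpha_3=-1$ pairs with $b_3=q$, and $b_2=2$ pairs with $\alpha_2=\varepsilon_1\overline{\alpha_0}/\alpha_0$. This matters because in Step~(4) under condition~\eqref{eq:C1}, the contradiction comes from $p=b_1\le\max\{R_1,R_2\}$, and $R_1,R_2$ are \emph{absolute constants} (they come from $c_j a_2 a_3$, and $a_2,a_3$ are constants here). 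Your version, $p=b_3\le\max\{T_1,T_2\}$, would not give a contradiction: $T_1,T_2$ are of order $\log y$ (they come from $c_j a_1 a_2$ and $a_1\sim\log y$), so the inequality does not force $p$ below your working threshold. You flagged ``getting the $b_i$ exactly right'' as the main obstacle, and indeed this is precisely where the slip occurs; with $b_1=p$, $b_2=2$, $b_3=q$ the rest of your plan goes through as in the paper.
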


\begin{proof}
We will assume that $p>20 \cdot 10^{6}$ and use the modular lower bound for $y$
in equation~(14) of \cite{BMS2}:
\begin{equation}
\label{eq:y-LB2}
y \geq \left( \sqrt{p} \, - 1 \right)^{2}>19.9 \cdot 10^{6}.
\end{equation}

We will use the principal branch of the logarithm throughout the proof.

\vspace*{1.0mm}

\noindent
\underline{Step~(1): Linear form definition and upper bound}\\

In Section~15 of \cite{BMS2}, on page~56, the authors consider
\[
\Lambda=2\log \left( \varepsilon_{1} \overline{\alpha_{0}}/\alpha_{0} \right)
+ p \log \left( \varepsilon_{2} \overline{\gamma}/\gamma \right)
+ iq\pi,
\]
for some rational integer $q$ with $|q|<p$,
$\varepsilon_{1}, \varepsilon_{2}= \pm 1$,
$\alpha_{0}= \left( 1+\sqrt{-7} \right)/2$ and $\gamma$ is an algebraic integer
in $\bbQ \left( \sqrt{-7} \right)$ with norm $y$ such that
\[
\left( \frac{x-\sqrt{-7}}{x+\sqrt{-7}} \right)^{k}
= \left( \overline{\alpha_{0}}/\alpha_{0} \right)^{\kappa} \left( \pm \overline{\gamma}/\gamma \right)^{p}.
\]

This expression comes from Lemma~13.1 of \cite{BMS2} and its proof since
$\bbQ \left( \sqrt{-7} \right)$ has class number $1$, so $k_{0}=1$ there.
As a result, their $\kappa=2$ and $k=1$. They assert in the proof of their
Lemma~13.4 that this value of $\alpha_{0}$ is valid.

From their Lemma~13.3, we have
\begin{equation}
\label{eq:lfl2-UB}
\log \left| \Lambda \right| < -\frac{p}{2}\log(y)+\log \left( 2.2\sqrt{7} \right),
\end{equation}
since $D_{1}=1$ and $D_{2}=7$.

This is the case~(I) linear form that they consider there.

\vspace*{1.0mm}

\noindent
\underline{Step~(2): Matveev}\\
In the notation of Theorem~\ref{thm:Mat}, we have
$\alpha_{1}=\varepsilon_{2} \overline{\gamma}/\gamma$,
$\alpha_{2}=\varepsilon_{1} \overline{\alpha_{0}}/\alpha_{0}$,
$\alpha_{3}=-1$,
$b_{1}=p$, $b_{2}=2$ and
$b_{3}=q$. So $D=\chi=2$.

Note that we have swapped the $\alpha_{1}$ term with the $\alpha_{2}$ term here
with those in the case~(I) linear form in \cite{BMS2}.
This will result in $A_{1}$ being the largest of the $A_{i}$'s,
Doing so lets us take $B=p$ in Theorem~\ref{thm:Mat}.

Recall that
$A_{j} \geq \max \left\{ D \h \left( \alpha_{j} \right), \left| \log \alpha_{j} \right| \right\}$.
Since the norm of $\gamma$ is $y$, we have $\h \left( \gamma \right)=\log(y)/2$
and since $\alpha_{1}$ is on the unit circle, by our choice of $\varepsilon_{2}$,
we have $\left| \log \alpha_{1} \right|<\pi/2$. Thus, we can take
$A_{1}=\log(y)$, since $y>20 \cdot 10^{6}$ (by \eqref{eq:y-LB2}).

Similarly, for $A_{2}$, we have $d=2$ by Lemma~13.1 and Table~4 of \cite{BMS2}.
So from their Lemma~13.1, $\h \left( \alpha_{2} \right) = \log(2)/2$.
Also, $\left| \log \alpha_{2} \right|=0.722734\ldots$, so we can take
$A_{2}=0.73$.

Lastly, we can take $A_{3}=\pi$.

Applying Matveev's theorem (Theorem~\ref{thm:Mat} above) with the above quantities
gives
\begin{align*}
\log \left| \Lambda \right| & > -\frac{5\cdot 16^{5}}{6 \cdot 2} e^{3} (7+2\cdot 2) (3e/2)^{2}
\cdot \left( 26.25+\log \left( 2^{2}\log(2e) \right) \right) \cdot 2^{2} \log(y) 0.73 \pi
\log \left( 1.5e \cdot 2 p\log(2e) \right) \\
&>
- 4.11 \cdot 10^{11}\log(y)\log(13.81p).
\end{align*}

Combining this lower bound for $\log \left| \Lambda \right|$ with the upper
bound in \eqref{eq:lfl2-UB}, and dividing by $\log(y)/2$, we obtain
\[
8.21 \cdot 10^{11}\log(p)+2.16 \cdot 10^{12}
>8.21 \cdot 10^{11}\log(13.81p)+2\log \left( 2.2\sqrt{7} \right)/\log(y)>p,
\]
using \eqref{eq:y-LB2}.

Applying Lemma~\ref{lem:PW} with $a=2.16 \cdot 10^{12}$,
$b=8.21 \cdot 10^{11}$, $h=1$ and $x=p$, so $c=hb^{1/h}=b$ and
\begin{equation}
\label{eq:eg2-nUB1}
p < b\log(b)+\frac{\log(b)}{\log(b)-1}(a+b\log(\log(b)))
<2.76 \cdot 10^{13}.
\end{equation}

\vspace*{1.0mm}

\noindent
\underline{Step~(3): Non-degenerate case}\\
Here we apply Theorem~\ref{thm:main} to reduce our bound on $p$.

Recall that we take
$a_{i} \geq \rho \left| \log \alpha_{i} \right|
-\log \left| \alpha_{i} \right| +2\cD \h \left( \alpha_{i} \right)$
and here $\cD=1$.

Using the values of $\h \left( \alpha_{i} \right)$ and
$\left| \log \alpha_{i} \right|$ that we found in Step~(2),
we can take $a_{1}=\rho\pi/2+\log(y)$, $a_{2}=0.723\rho+\log(2)$ and
$a_{3}=\rho\pi$.

To apply Theorem~\ref{thm:main}, we do a brute force search in the same way as
we did in the first example. For each of $20$ equidistributed values of $\chi$
satisfying $0.04 \leq \chi \leq 0.24$, we proceed as follows. First, we search
over integer values of $L$ with $30 \leq L \leq 200$,
20 values of each of $m$ and $\rho$ evenly distributed with
$10 \leq m \leq 30$ and $3 \leq \rho \leq 13$ that lead to \eqref{eq:o} being
satisfied and so that $KL\log(\rho)$ is as small as possible. With such a minimal
choice of parameters for Step~3 for each value of $\chi$, we find the associated
bound for Step~4 (the degenerate case) for this choice of parameters. The choice
of $\chi$ that leads to the best bound for both Step~3 and Step~4 is the one we
use.

This search led to the choice
\[
\chi=0.08, \quad L=106, \quad m=21.0 \quad \text{ and } \quad \rho=5.5.
\]

Since $\lfloor Lm a_{1} a_{2} a_{3} \rfloor < \lfloor 300,476\log(y) \rfloor$, we put
\[
K= \lfloor 231,600\log(y) \rfloor.
\]

We have $a=a_{2}$ and $a'=a_{3}$ and put
\[ 
c_{1}=33.46\ldots, \quad
c_{2}=243.59\ldots, \quad c_{3}=1163.65\ldots.
\]

Using these values and the values of the $R_{i}$'s in \eqref{eq:RST-values}, we get 
\[
R_{1}=\lfloor c_{1} a_{2}a_{3} \rfloor =2299, \quad
R_{2}=\lfloor c_{2} a_{2}a_{3} \rfloor =16,737,
\]
and
\[ 
R_{3}=\lfloor c_{3}a_{2}a_{3}\rfloor = 79,953.
\]

Further,
\[
S_{1}=\lfloor c_{1} a_{1}a_{3} \rfloor = \lfloor 876\log y \rfloor, \quad
S_{2}=\lfloor c_{2} a_{1}a_{3} \rfloor = \lfloor 6373\log y \rfloor, \quad 
S_{3}=\lfloor c_{3} a_{1}a_{3} \rfloor = \lfloor 30440\log y \rfloor
\]
and finally
\[
T_{1}=\lfloor c_{1} a_{1}a_{2} \rfloor = \lfloor 202\log y \rfloor, \quad
T_{2}=\lfloor c_{2} a_{1}a_{2} \rfloor = \lfloor 1467\log y \rfloor,
\]
and
\[
T_{3}=\lfloor c_{3}a_{1}a_{2} \rfloor = \lfloor 7006\log y \rfloor.
\]

With $\cV = \left( \left( R_{1}+1 \right) \left( S_{1}+1 \right) \left( T_{1}+1 \right) \right)^{1/2}$,
we have $\chi \cV > 1611\log(y)$, while
$1100\log(y) > S_{1}+T_{1}+1=\max \left\{ R_{1}+S_{1}+1,\, S_{1}+T_{1}+1,\,R_{1}+T_{1}+1 \right\}$,
since $\log(y)>17.4$, so $\cM=\chi \cV$.

With these choices, along with our lower bound
for $y$ and upper bound for $n$, we also find that
\begin{align*}
\log \left( b_{3}'\eta_{0} \right)
&< \log \left( \left( 4337\log(y) + 49,500 \right) n \right)
<\log \log (y) + 39.85 \quad \text{ and} \\
\log \left( b_{3}''\zeta_{0} \right)
&< \log \left( \left( 8680 + 18,850n \right) \log(y) \right)
< \log \log (y) + 40.8.
\end{align*}

Combining these estimates with Lemma~\ref{lem:Stir}(a) and our expression above
for $K$, we obtain
\[
\log (b')<59.6.
\]

As seen above, these choices imply that the conditions~\eqref{eq:thm41-i}--\eqref{eq:thm41-v}
of Theorem~\ref{thm:main} hold. Moreover, the above choices have been made so
that condition \eqref{eq:o} holds.

Thus we have 
\[
\log \left| \Lambda \right| \geq -KL\log \rho-\log(KL)>-4.185 \cdot 10^{7} \log(y).
\]

Combining this with the upper bound from \eqref{eq:lfl1-UB}, we get
\[
p<83.69 \cdot 10^{6}.
\]

\vspace*{1.0mm}

\noindent
\underline{Step~(4): Degenerate case}\\
Under condition~\eqref{eq:C1} of Theorem~\ref{thm:main}, we obtain
\[
p=b_{1} \leq \max \left\{ R_{1},R_{2} \right\} < 16,800,
\]
which is excluded since we assume $p>20 \cdot 10^{6}$.

So we now consider condition~\eqref{eq:C2} of Theorem~\ref{thm:main}, where we have
\[
u_{1}b_{1} + u_{2}b_{2}+u_{3}b_{3}=u_{1}q + u_{2} + u_{3}p=0
\]
with $\gcd \left( u_{1}, u_{2}, u_{3} \right) = 1$.

We put
\[
U_{1}:=\frac{\left( S_{1}+1 \right) \left( T_{1}+1 \right)}
    {\cM-\max \{ S_{1}, T_{1} \}},
\hspace*{2.0mm}
U_{2}:=\frac{(R_{1}+1)( T_{1}+1) }
    {\cM-\max \{ R_{1}, T_{1} \}}
\hspace*{2.0mm} \text{ and } \hspace*{2.0mm}
U_{3}:=\frac{\left( R_{1}+1 \right) \left( S_{1}+1 \right)}
    {\cM-\max \{ R_{1}, S_{1} \}}.
\]

From the values of the relevant quantities in Step~(3) and $\log(y)>17.4$, we obtain
\[
\left| u_{1} \right| \leq U_{1} <239.64\log(y), \hspace*{3.0mm}
\left| u_{2} \right| \leq \lfloor U_{2} \rfloor = 328 \hspace*{3.0mm} \text{ and }
\left| u_{3} \right| \leq \lfloor U_{3} \rfloor = 2735.
\]

Here we use this linear relation between the $b_{i}$'s to reduce the linear form,
$\Lambda$, to one in two logarithms by eliminating $b_{2}$:
\begin{align*}
u_{2} \Lambda &=
2u_{2}\log \left( \varepsilon_{1} \overline{\alpha_{0}}/\alpha_{0} \right)
+ u_{2}p \log \left( \varepsilon_{2} \overline{\gamma}/\gamma \right)
+ u_{2}q \log(-1) \\
&= -\left( u_{1}p+u_{3}q \right)\log \left( \varepsilon_{1} \overline{\alpha_{0}}/\alpha_{0} \right)
+ u_{2}p \log \left( \varepsilon_{2} \overline{\gamma}/\gamma \right)
+ u_{2}q \log(-1) \\
&= p\log \left( \left( \varepsilon_{2} \overline{\gamma}/\gamma \right)^{u_{2}} \cdot \left( \varepsilon_{1} \overline{\alpha_{0}}/\alpha_{0} \right)^{-u_{1}} \right)
- q \log \left( \left( \varepsilon_{1} \overline{\alpha_{0}}/\alpha_{0} \right)^{u_{3}} \cdot (-1)^{-u_{2}} \right).
\end{align*}

So we put $\alpha_{1}=\left( \varepsilon_{2} \overline{\gamma}/\gamma \right)^{u_{2}} \cdot \left( \varepsilon_{1} \overline{\alpha_{0}}/\alpha_{0} \right)^{-u_{1}}$,
$\alpha_{2}=\left( \varepsilon_{1} \overline{\alpha_{0}}/\alpha_{0} \right)^{u_{3}} \cdot (-1)^{-u_{2}}$,
$b_{1}=p$ and $b_{2}=q$ in Theorem~2 of \cite{L5}. In the same way as in Example~1,
we take
\[
\varrho=180, \qquad \mu=0.61, \qquad a_{1}=495.2\log(y)+565.5
\qquad \text{ and } \qquad
a_{2}=2461.3.
\]

We have
\[
\frac{b_{1}}{a_{2}}+\frac{b_{2}}{a_{1}}
<0.00052p,
\]
since $D=1$, $\log(y)>17.4$ and $p>20 \cdot 10^{6}$.
So $\log(p)-4.431<h<\log(p)-4.185$. Thus
\[
\log \left| \Lambda \right|
>28,100 \left( \log(p)-4.185 \right)^{2} \log(y).
\]

Combining this with the upper bound for $\Lambda$ in \eqref{eq:lfl2-UB}, we get
\[
-28,100 \left( \log(p)-4.185 \right)^{2} \log(y)
< -(p/2)\log(y)+\log \left| 2.2\sqrt{7} \right|.
\]

Dividing both sides by $-(1/2)\log(y)$ and
again applying Lemma~\ref{lem:PW} with
\[
a=\frac{\log \left(2.2 \sqrt{7} \right)}{(1/2) \log \left( 19.9 \cdot 10^{6} \right) \exp(4.185)}
<0.0032,
\]
$b=28,100/(0.5\exp(4.185))$, $h=2$ and $x=p/\exp(4.185)$, we get $c<58.46$ and
\[
p<\exp(4.185) \cdot 347^{2}<79.2 \cdot 10^{6}.
\]

Similarly, when we consider the possibility that $u_{2}=0$, we find that
$p<54.2 \cdot 10^{6}$.

Combining this with the result of Step~(3), we have proved that $p<84 \cdot 10^{6}$.

\vspace*{1.0mm}

\noindent
\underline{Step~(5): Iteration of Steps~(3) and (4)}\\
As in \cite{BMS2}, we repeated Steps~(3) and (4) a second time using the improved
upper bound $p < 84 \cdot 10^{6}$.

\begin{center}
\begin{tabular}{|c|c|c|c|c|c|c|c|c|} \hline
iteration & \text{initial upper bound for $p$} & $L$ & $m$ & $\rho$ & $\chi$ & $\varrho$ & $\mu$ & \text{new upper bound for $p$} \\ \hline
$1$ & $2.76 \cdot 10^{13}$ & $106$ & $21.0$ &  $5.5$ &  $0.08$ & $180$ & $0.61$ & $84 \cdot 10^{6}$ \\ \hline
$2$ &   $84 \cdot 10^{6}$  &  $59$ & $18.0$ &  $6.0$ &  $0.1$  & $180$ & $0.61$ & $29 \cdot 10^{6}$ \\ \hline
$3$ &   $29 \cdot 10^{6}$  &  $59$ & $18.0$ & $5.75$ &  $0.1$  & $180$ & $0.61$ & $25.4 \cdot 10^{6}$ \\ \hline
$4$ & $25.5 \cdot 10^{6}$  &  $57$ & $19.0$ & $5.75$ &  $0.1$  & $180$ & $0.61$ & $24.94 \cdot 10^{6}$ \\ \hline
\end{tabular}
\end{center}

The four iterations took 191, 188, 103 and 104 seconds on a Windows laptop with
an Intel i7-9750H 2.60GHz CPU and 16Gb of RAM.

The fourth iteration gives us the upper bound for $p$ stated in the theorem.
\end{proof}

\appendix

\section{A Zero Estimate by Michel Laurent}

We revisit the original argument due to Masser \cite{A-Mas}, establishing zero
lemmas in algebraic commutative groups. Starting with a hypersurface, his approach
is based on the construction of complete intersections in successive codimensions
$2$, $3$, \ldots, using subsets of points $\Sigma_{1}$, $\Sigma_{2}$, $\ldots$ as
translation operators. Compared with subsequent works, see \cite{A-Phil} for instance,
the process enables us to control efficiently the possible degeneracies at each
step of the construction. We take advantage of this feature to minimise the size
of the sets $\Sigma_{1}$, $\Sigma_{2}$ and $\Sigma_{3}$ occurring in the following
proposition.

\begin{prop}
\label{prop:laurent-zero-est}
Let $\bbK$ be an algebraically closed field of characteristic $0$.
Let $K_{1}$, $K_{2}$ and $L$ be non-negative integers and let $\Sigma_{1}$, $\Sigma_{2}$ and
$\Sigma_{3}$ be finite subsets of the group $G = \bbK^{2} \times \bbK^{\times}$
$($whose composition law is written additively$)$. Assume that $\Sigma_{1}$,
$\Sigma_{2}$ and $\Sigma_{3}$ contain the origin $(0,0,1)$ of $G$ and that
\begin{equation}
\label{eq:A-i}
\begin{cases}
\card \left\{ ax_{1}+bx_{2} :
\text{$\exists y \in \bbK^{\times}$ with $\left( x_{1},x_{2},y \right) \in \Sigma_{1}$} \right\}
&> \max \left\{ K_{1}, K_{2} \right\},
\hspace*{1.0mm} \forall (a,b)\in \bbK^{2}\setminus \{(0,0)\}, \\
\card \left\{ y :
\text{$\exists \left( x_{1}, x_{2} \right) \in \bbK^{2}$ with $\left( x_{1},x_{2},y \right) \in \Sigma_{1}$} \right\}
&> L,
\end{cases}
\end{equation}

\begin{equation}
\label{eq:A-ii}
\begin{cases}
\card \left\{ \left( ax_{1}+bx_{2},y \right) : \left( x_{1},x_{2},y \right) \in \Sigma_{2} \right\}
&> 2\max \left\{ K_{1}, K_{2} \right\}L,
\hspace*{1.0mm} \forall (a,b) \in \bbK^{2}\setminus \{(0,0)\}, \\
\card \left\{ \left( x_{1},x_{2} \right) :
\text{$\exists y \in \bbK^{\times}$ with $\left( x_{1},x_{2},y \right) \in \Sigma_{2}$} \right\}
&> 2K_{1}K_{2},
\end{cases}
\end{equation}
and
\begin{equation}
\label{eq:A-iii}
\card \Sigma_{3} > 6K_{1}K_{2}L.
\end{equation}

Let $s$ be a non-zero polynomial of $\bbK \left[ X_{1},X_{2},Y \right]$, whose
partial degrees in the variables $X_{1},X_{2}$ and $Y$ are bounded by $K_{1}$,
$K_{2}$ and $L$, respectively. Then $s$ does not vanish identically on the set
$\Sigma_{1}+\Sigma_{2}+\Sigma_{3}$.
\end{prop}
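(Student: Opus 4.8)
The plan is to run, inside the commutative algebraic group $G=\bbK^{2}\times\bbK^{\times}$, the descending-codimension construction of Masser \cite{A-Mas} recalled before the statement, arranging the combinatorics so that the three groups of hypotheses \eqref{eq:A-i}, \eqref{eq:A-ii}, \eqref{eq:A-iii} rule out, in turn, the degeneracies that can occur at codimensions $1$, $2$ and $3$. Two structural remarks are used throughout. First, since we only care about vanishing and not about multiplicities, every ``obstruction'' subgroup that appears may be taken connected, and the connected algebraic subgroups of $G$ are exactly the products $W\times\{1\}$ and $W\times\bbK^{\times}$ with $W$ a $\bbK$-subspace of $\bbK^{2}$; these have codimensions $3-\dim W$ and $2-\dim W$, so the codimension-$1$ ones are $\bbK^{2}\times\{1\}$ and the $W\times\bbK^{\times}$ with $\dim W=1$, the codimension-$2$ ones are $\{0\}\times\bbK^{\times}$ and the $W\times\{1\}$ with $\dim W=1$, and the only codimension-$3$ one is trivial. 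Second, since each $\Sigma_{j}$ contains the origin, any single $\Sigma_{j}$, and more generally any partial sum, already sits inside $\Sigma_{1}+\Sigma_{2}+\Sigma_{3}$.

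Assume, for a contradiction, that $s\neq 0$ vanishes on $\Sigma_{1}+\Sigma_{2}+\Sigma_{3}$. Compactify $G$ inside a product of three projective lines and regard $s$ as a non-zero section of the line bundle of multidegree $(K_{1},K_{2},L)$, with divisor $Z$; degrees of subvarieties are measured against this multidegree, keeping the $(X_{1},X_{2})$-part and the $Y$-part of the bidegree separate at every stage (this separation is what will eventually produce $\max\{K_{1},K_{2}\}$ and $K_{1}K_{2}$ rather than cruder bounds). By induction on $i=1,2,3$ one then builds an irreducible subvariety $V_{i}\subseteq Z$ of codimension $i$ in $G$, a point $\omega_{i}\in G$, and nested large subsets $\Sigma_{i+1}'\subseteq\Sigma_{i+1},\dots,\Sigma_{3}'\subseteq\Sigma_{3}$, such that $V_{1}$ is an irreducible component of $Z$; $V_{i}$ is an irreducible component of $V_{i-1}\cap(\tau_{i}+Z)$ for a translation $\tau_{i}$ read off from $\Sigma_{i}$; $\omega_{i}+(\Sigma_{i+1}'+\cdots+\Sigma_{3}')\subseteq V_{i}$; and $\deg V_{i}$ is bounded by the product of $i$ copies of $\deg Z$ via repeated use of B\'{e}zout's theorem. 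The base case $i=1$ uses that $Z$ has boundedly many irreducible components, so a pigeonhole step selects one of them that still contains a translate of a large subset of $\Sigma_{2}+\Sigma_{3}$.

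At the step from $V_{i-1}$ to $V_{i}$ there is a dichotomy driven by the translates $\sigma+Z$, $\sigma\in\Sigma_{i}'$. Either some $\sigma$ satisfies $V_{i-1}\not\subseteq\sigma+Z$, so $V_{i-1}\cap(\sigma+Z)$ drops dimension by exactly one and a further pigeonhole over $\Sigma_{i+1},\dots,\Sigma_{3}$ produces $V_{i}$ with the stated properties; or $V_{i-1}\subseteq\sigma+Z$ for every $\sigma\in\Sigma_{i}'$, the degenerate case. In the degenerate case, translating by one element of $\{\sigma:V_{i-1}\subseteq\sigma+Z\}$ shows that a large subset of its differences lies in the stabiliser $H=\{h\in G:h+V_{i-1}=V_{i-1}\}$, a connected subgroup of $G$ that is proper and, after the degree bookkeeping, can be taken of codimension $i$; so $\Sigma_{i}$ meets few cosets of $H$, which by the classification above contradicts one of our hypotheses. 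For $i=1$, $H$ is either $\bbK^{2}\times\{1\}$ (excluded by the second line of \eqref{eq:A-i}, with threshold $L$, which is the $Y$-degree of $s$) or $W\times\bbK^{\times}$ with $\dim W=1$ (excluded by the first line, with threshold $\max\{K_{1},K_{2}\}$); for $i=2$ the two lines of \eqref{eq:A-ii} dispose likewise of the two types of codimension-$2$ subgroup, the thresholds $2\max\{K_{1},K_{2}\}L$ and $2K_{1}K_{2}$ being the B\'{e}zout degrees accumulated on $V_{1}$ in the two bidegree components; and for $i=3$ the care exercised at $i=2$ leaves no room for the degenerate branch, so the chain reaches a codimension-$3$ (i.e.\ $0$-dimensional) variety, into which $\Sigma_{3}'$ must inject, forcing $\card\Sigma_{3}$ below the B\'{e}zout bound $6K_{1}K_{2}L=3!\,K_{1}K_{2}L$ and contradicting \eqref{eq:A-iii}.

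Thus in every case one of \eqref{eq:A-i}--\eqref{eq:A-iii} is violated, which is the desired contradiction. I expect the genuinely delicate part to be the incidence bookkeeping: guaranteeing through each pigeonhole that the surviving subsets $\Sigma_{i}'$, $\Sigma_{i+1}',\dots$ stay strictly above the thresholds in \eqref{eq:A-i}--\eqref{eq:A-iii}, and that the accumulated B\'{e}zout degree of $V_{i}$ is exactly $i!$ times the correct product of $K_{1}$, $K_{2}$ and $L$'s in each bidegree component -- this is precisely where Masser's efficient control of the degeneracies, applied to the bidegree rather than to a single total degree, is what yields the sharp constants $\max\{K_{1},K_{2}\}$, $2K_{1}K_{2}$ and $6K_{1}K_{2}L$. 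The supporting facts -- that stabilisers of subvarieties are algebraic subgroups, the B\'{e}zout degree estimates, and the classification of connected subgroups of $G$ -- I would quote from \cite{A-Mas} and \cite{W}.
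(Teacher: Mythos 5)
Your plan and the paper's proof agree on the broad framework (Masser-style descent in codimension inside $G=\bbK^{2}\times\bbK^{\times}$, tri\-homogeneous B\'{e}zout bookkeeping in separate bidegree components, and the classification of proper connected algebraic subgroups of $G$ as $W\times\{1\}$ or $W\times\bbK^{\times}$). But they diverge at exactly the place you flag as ``genuinely delicate,'' and the divergence is not just cosmetic: it is what the paper's Masser-style construction is engineered to avoid.

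Your plan passes, at each stage, to a \emph{single irreducible component} $V_{i}$ of the current intersection and then requires ``nested large subsets'' $\Sigma_{j}'\subseteq\Sigma_{j}$ so that a translate of $\Sigma_{i+1}'+\cdots+\Sigma_{3}'$ lies inside $V_{i}$. This pigeonhole step forces you to divide by the number of components of $Z_{i}$, a quantity controlled only by the multidegree of the divisor; the surviving sets would then fail to remain strictly above the thresholds in \eqref{eq:A-i}--\eqref{eq:A-iii}, and the constants $\max\{K_{1},K_{2}\}$, $2K_{1}K_{2}$, $6K_{1}K_{2}L$ would not survive. The paper never chooses a component. Instead it builds a \emph{regular sequence} $(s_{1},s_{2},s_{3})$: $s_{1}=s$, $s_{2}$ a generic linear combination of the translates $s_{1}\circ\tau_{g}$ over $g\in\Sigma_{1}$, and $s_{3}$ a generic linear combination of $s_{1}\circ\tau_{g}$, $s_{2}\circ\tau_{g}$ over $g\in\Sigma_{2}$. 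The stabiliser argument (your dichotomy) is used only to show that, for every component $V$ of $Z_{i}$, at least one translate $g+V$ with $g\in\Sigma_{i}$ is not a component of $Z_{i}$; a generic combination therefore vanishes identically on no component, and the full cycle $Z_{i+1}=Z_{i}\cdot(s_{i+1})$ still contains all of $\Sigma_{i+1}+\cdots+\Sigma_{3}$, with no thinning of the translation sets at any step. Only the final count $\card\Sigma_{3}\leq\delta_{1,1,1}(Z_{3})\leq 6K_{1}K_{2}L$ is then needed. So the gap is that your component-selection/pigeonhole strategy would cost a factor tied to the component count; replacing it by the generic-linear-combination device on the whole cycle, as the paper does, is what yields the stated sharp thresholds (this is exactly the paper's opening remark contrasting Masser's efficient control of degeneracies with the later Philippon-style treatment).
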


Notice that a similar result has been obtained by Gouillon \cite{A-G} for polynomials
$s$ of total degree in $X_{1}$ and $X_{2}$ bounded by $2\max \left\{ K_{1}, K_{2} \right\}$,
with a constant $12$
instead of $6$ in the above main condition~\eqref{eq:A-iii} and where $\bbK=\bbC$.

\subsection{Geometrical preliminaries}
\label{sect:A-1}

We embed naturally the group $G$ in the product
\[
\bP = \bP^{1}(\bbK) \times \bP^{1}(\bbK) \times \bP^{1}(\bbK).
\]

For any closed irreducible subvarieties $V \subseteq \bP$ of codimension $0\leq r\leq 3$,
and any triple of integers $(a,b,c)$ with 
\[
a \in \{0,1\}, b \in \{0,1\}, c \in \{0,1\} \text{ and } a+b+c=r,
\]
we define the multidegrees $\delta_{a,b,c}(V)$ as the intersection degree
\[
\delta_{a,b,c}(V)= \card \left\{ V\ \cap \pi_{1}^{-1} \left( L_{a} \right) \cap \pi_{2}^{-1} \left( L_{b} \right) \cap \pi_{3}^{-1} \left( L_{c} \right) \right\},
\]
where $L_{a}$, $L_{b}$ and $L_{c}$ stand for generic linear subvarieties in
$\bP^{1}(\bbK)$ with respective dimensions $a$, $b$ and $c$ (thus $L_{1} =\bP^{1}(\bbK)$
and $L_{0}$ is a point) and where the maps $\pi_{j} : \bP \rightarrow \bP^{1}(\bbK)$
denote the three canonical projections. We also extend to cycles (meaning formal
linear combinations with integer coefficients of closed irreducible subvarieties
of codimension $r$ in $\bP$) the above definition of the multidegrees
$\delta_{a,b,c}$. Let $Z$ be a cycle of codimension $r \leq 2$ in $\bP$ and let
$s \in \bbK \left[ X_{1},U_{1};X_{2},U_{2};Y,V \right]$ be a non-zero polynomial
which is homogeneous of respective degrees $D_{X_{1}}$, $D_{X_{2}}$, $D_{Y}$ in
each of
the three pairs of variables $\left( X_{1},U_{1} \right)$, $\left( X_{2},U_{2} \right)$
and $(Y,V)$. Assume that $s$ does not vanish identically on each component of $Z$. 
Then Bezout's Theorem gives us the multidegrees of the intersection cycle $Z \cdot (s)$
of codimension $r+1$ in $\bP$. For any $a$, $b$ and $c$ as above with $a+b+c=r+1$,
we have the equalities:
\begin{equation}
\label{eq:A-0}
\delta_{a,b,c}(Z \cdot (s))= D_{X_{1}}\delta_{a-1,b,c}(Z)+D_{X_{2}}\delta_{a,b-1,c}(Z)+D_{Y}\delta_{a,b,c-1}(Z),
\end{equation}
where the multidegrees $\delta$ appearing on the right-hand side are understood
to be zero whenever the indices $a-1$ or $b-1$ or $c-1$ are negative.

Now the above Bezout equalities on $\bP$ induce upper bounds on $G$ in the following way.
For any irreducible subvarieties $V\subseteq G$, we denote by 
$\delta_{a,b,c}(V)$ the corresponding multidegree $\delta_{a,b,c} \left( \overline{V} \right)$
of its Zariski closure $\overline{V}$ in $\bP$, and if $Z$ is any cycle in $G$,
that is to say some formal linear combination of irreducible subvarieties of $G$
of the same codimension, we define $\delta_{a,b,c}(Z)$ by linearity. 

Let $s_{1}$, $s_{2}$ and $s_{3}$ be three non-zero polynomials of $\bbK \left[ X_{1},X_{2},Y \right]$
with partial degrees in $X_{1}$, $X_{2}$ and $Y$ respectively bounded by $K_{1}$,
$K_{2}$ and $L$. Denote by $Z_{1} = \left( s_{1} \right)$ the (eventually null)
divisor of the zeroes of $s_{1}$ on $G$ and assume that $s_{2}$ does not vanish
identically on any component of $Z_{1}$. Let $Z_{2}=Z_{1} \cdot \left( s_{2} \right)$
be the (eventually null) intersection cycle on $G$ of codimension $2$. Assume again
that $s_{3}$ does not vanish identically on any component of $Z_{2}$ and put
$Z_{3}=Z_{2} \cdot \left( s_{3} \right)$.
Notice that our assumptions mean equivalently that the sequence $\left( s_{1},s_{2},s_{3} \right)$
is a regular sequence in the local ring of any common zero of $s_{1}$, $s_{2}$
and $s_{3}$
on $G$. Then the above trihomogeneous version of Bezout's theorem in equation~\eqref{eq:A-0}
implies inductively the upper bounds for the multidegrees of the intersection cycles
$Z_{1}$, $Z_{2}$ and $Z_{3}$:
\begin{equation}
\label{eq:A-1}
\delta_{1,0,0} \left( Z_{1} \right) \leq K_{1}, \quad
\delta_{0,1,0} \left( Z_{1} \right) \leq K_{2},
\quad \delta_{0,0,1} \left( Z_{1} \right) \leq L,
\end{equation}
\begin{equation}
\label{eq:A-2}
\delta_{1,1,0} \left( Z_{2} \right) \leq 2K_{1}K_{2}, \quad
\delta_{0,1,1} \left( Z_{2} \right) \leq 2K_{2}L, \quad
\delta_{1,0,1} \left( Z_{2} \right) \leq 2K_{1}L \quad \text{and}
\end{equation}
\begin{equation}
\label{eq:A-3}
\quad \delta_{1,1,1} \left( Z_{3} \right) \leq 6K_{1}K_{2}L .
\end{equation}

\subsection{Proof of Proposition~\ref{prop:laurent-zero-est}}

Suppose on the contrary that there exists a non-zero polynomial $s\in \bbK \left[ X_{1},X_{2},Y \right]$
with partial degrees in $X_{1}$, $X_{2}$ and $Y$ bounded by $K_{1}$, $K_{2}$ and
$L$ and vanishing on $\Sigma_{1}+\Sigma_{2}+\Sigma_{3}$. Then we plan to construct
polynomials $s_{1}$, $s_{2}$ and $s_{3}$ as in Section~\ref{sect:A-1} and vanishing
moreover respectively on the subsets $\Sigma_{1}+\Sigma_{2}+\Sigma_{3}$,
$\Sigma_{2}+\Sigma_{3}$ and $\Sigma_{3}$. Since 
\[
\delta_{1,1,1} \left( Z_{3} \right) \geq \card \Sigma_{3},
\]
the assumption~\eqref{eq:A-iii} of the proposition will contradict equation~\eqref{eq:A-3}.

We start with $s_{1}=s$. Notice that the cycle $Z_{1}=\left( s_{1} \right)$ is
non-zero since the points $\Sigma_{1}+\Sigma_{2}+\Sigma_{3}$ are contained in
its support. 

Let us construct $s_{2}$. Observe first that for any component $V$ of $Z_{1}$,
there exists a translated variety $g+V$, for some $g\in \Sigma_{1}$, which is
not a component of $Z_{1}$. Otherwise by equation~\eqref{eq:A-1}, we should have the
upper bounds
\begin{align*}
\card \left( \Sigma_{1}/H \right) \delta_{1,0,0}(V) \leq \delta_{1,0,0} \left( Z_{1} \right) & \leq K_{1}, \\
\card \left( \Sigma_{1}/H \right) \delta_{0,1,0}(V) \leq \delta_{0,1,0} \left( Z_{1} \right) & \leq K_{2} \quad \text{and} \\
\card \left( \Sigma_{1}/H \right) \delta_{0,0,1}(V) \leq \delta_{0,0,1} \left( Z_{1} \right) & \leq L,
\end{align*}
where $H= \{ g\in G: g+V=V \}$ is the stabiliser of $V$. Clearly $H$ is an algebraic
subgroup of $G$ and $\dim H \leq 2$.

When $H=W\times \bbK^{\times}$, where $W$ is
either $\{0\}$ or a line $aX_{1}+bX_{2}=0$ in $\bbK^{2}$, at least one of the
degrees $\delta_{1,0,0}(V)$ or $\delta_{0,1,0}(V)$ is positive and we get a
contradiction with the first lower bound of \eqref{eq:A-i}.

When $H=W\times\mu$,
with a finite multiplicative group $\mu$, then $\delta_{0,0,1}(V) \geq \card (\mu)$,
and we deduce from the last upper bound
\begin{align*}
\card \left\{ y :
\text{$\exists \left( x_{1}, x_{2} \right) \in \bbK^{2}$ with $\left( x_{1},x_{2},y \right) \in \Sigma_{1}$} \right\}
&\leq \card \left( \Sigma_{1}/(W \times \{1\}) \right) \\
&\leq \card \left( \Sigma_{1}/(W \times \mu) \right) \card (\mu) \leq L,
\end{align*}
which contradicts the second lower bound of \eqref{eq:A-i}.

Therefore, for some
$g \in \Sigma_{1}$, the translated polynomial $s_{1} \circ \tau_{g}$ does not
vanish identically on $V$. Now a generic linear combination $s_{2}$ of the
polynomials $s_{1} \circ \tau_{g}, g \in \Sigma_{1}$ has the required properties.

We construct $s_{3}$ in a similar way, proving first that for any component $V$
of $Z_{2} =Z_{1} \cdot \left( s_{2} \right)$, the translated varieties $g+V, g \in \Sigma_{2}$,
are not all components of $Z_{2}$. Otherwise we should deduce from \eqref{eq:A-2}
the upper bounds
\begin{align}
\label{eq:sigma2-bnd1}
\card \left( \Sigma_{2}/H \right) \delta_{1,1,0}(V) \leq \delta_{1,1,0} \left( Z_{2} \right) & \leq 2K_{1}K_{2},\\
\card \left( \Sigma_{2}/H \right) \delta_{1,0,1}(V) \leq \delta_{1,0,1} \left( Z_{2} \right) & \leq 2K_{1}L \quad \text{and} \nonumber\\
\card \left( \Sigma_{2}/H \right) \delta_{0,1,1}(V) \leq \delta_{0,1,1} \left( Z_{2} \right) & \leq 2K_{2}L, \nonumber
\end{align}
where $H= \{g\in G : g+V=V\}$ is again the stabiliser of $V$. Now $\dim H \leq 1$.
When $H=\{0\} \times \bbK^{\times}$, the curve $V$ is some line $\left( u,v,\bbK^{\times} \right)$
and $\delta_{1,1,0}(V)=1$. Then the first upper bound in \eqref{eq:sigma2-bnd1}
contradicts the second lower bound of \eqref{eq:A-ii}.

Suppose now that $H = W \times \mu$, where $\mu$ is
a finite multiplicative group and $W$ is either $\{0\}$ or a line $aX_{1}+bX_{2}=0$.
The projection $\pi_{1}\times\pi_{2}$ restricted to $V$ is then a finite map on
to its image in $\bbK^{2}$ of degree $\geq \card (\mu)$. Then at least one of the
multidegrees $\delta_{1,0,1}(V)$ or $\delta_{0,1,1}(V)$ is $\geq \card (\mu)$.
Thus we find the upper bounds
\begin{align*}
\card \left\{ \left( ax_{1}+bx_{2},y \right) : \left( x_{1}, x_{2}, y \right) \in \Sigma_{2} \right\}
& \leq \card \left( \Sigma_{2}/(W \times \{1\}) \right) \\
& \leq \card \left( \Sigma_{2}/(W \times \mu) \right) \card (\mu)
\leq 2\max \left\{ K_{1}, K_{2} \right\}L,
\end{align*}
which contradict the first lower bound of \eqref{eq:A-ii}.

Finally, we take for $s_{3}$ a
generic linear combination of the polynomials $s_{1} \circ \tau_{g}$ and $s_{2}\circ\tau_{g}$,
for $g\in \Sigma_{2}$.

\end{document}